\journal{Stochastic Processes and Their Applications}
\newtheorem{theorem}{Theorem}
\newtheorem{corollary}{Corollary}
\newtheorem{lemma}{Lemma} 
\newtheorem{assumption}{Assumption}
\newtheorem{assum}{Assumption}
\newdefinition{definition}{Definition}
\newdefinition{example}{Example}	
\DeclarePairedDelimiter\abs\lvert\rvert
\DeclarePairedDelimiter\floor{\lfloor}{\rfloor}
\newcommand{\NN}{\mathbb{N}}
\newcommand{\ZZ}{\mathbb{Z}}
\newcommand{\Zd}{{\ZZ^d}}
\newcommand{\R}{\mathbb{R}}
\newcommand{\Rd}{{\R^d}}
\newcommand{\vR}{{v\in \Rd}}
\newcommand{\PP}{\mathbb{P}}
\newcommand{\EE}{\mathbb{E}}
\newcommand{\FF}{\overline{F}}
\newcommand{\GG}{{\overline{G}}}
\newcommand{\dnm}{D_{n,k,L}^{-}}
\newcommand{\dnp}{D_{n,k,L}^{+}}
\newcommand{\ff}{f(v-u)}
\newcommand{\gga}{g^{\alpha}(\abs{v-u})}
\newcommand{\ffa}{f^{\alpha}(v-u)}
\newcommand{\fft}{f^{(t)}(v-u)}
\newcommand{\dd}{\mathrm{d}}
\newcommand{\zvt}{Z_v^{(t)}}
\newcommand{\myL}{M_{L,y}}
\newcommand{\myt}{M_{L,y}^{(t)}}
\newcommand{\taut}{\tau_L^{(t)}}
\newcommand{\bary}{\overline{y}(v)}
\newcommand{\supy}{y^*(v)}
\newcommand{\Fre}{\mathrm{MDA}(\Phi_{\alpha})}
\newcommand{\e}{\mathrm{e}}
\newcommand{\I}[1]{\boldsymbol{1}_{\{#1\}}}
\newcommand{\II}[1]{\boldsymbol{1}_{#1}}
\newcommand{\du}{\mathrm{d}u}
\newcommand{\Aa}{\mathcal{A}}
\newcommand{\Bb}{\mathcal{B}}
\newcommand{\RA}{\mathcal{R}_{-\alpha}}
\newcommand{\RV}{\mathcal{RV}_{\alpha}}
\newcommand{\ball}[1]{B(#1)}
\newcommand{\Tt}{t_{n,k,L}}
\newcommand{\tildet}{\tilde t_{n,k,L}}
\newcommand{\Iz}{I_z^{n,k,L}}
\newcommand{\Jz}{J_z^{n,k,L}}
\newcommand{\Pp}{P_{n,k,L}}
\newcommand{\pp}{p_{n,k,L}}
\newcommand{\Qq}{Q_{n,k,L}}
\newcommand{\qq}{q_{n,k,L}}
\newcommand{\Nk}{N_{k,L}}
\newcommand{\KL}{K_{n,L}}
\newcommand{\Kk}{\mathcal{K}_{n,L}}
\newcommand{\alphaL}{\alpha_{y,n,L}}
\begin{document}

\begin{frontmatter}



\title{Extremes of L\'evy-driven spatial random fields with regularly varying L\'evy measure}

\author[cbs]{Anders R{\o}nn-Nielsen}
\ead{aro.fi@cbs.dk}

\author[cbs]{Mads Stehr\corref{cor1}}
\ead{mast.fi@cbs.dk}
\cortext[cor1]{Corresponding author}

\affiliation[cbs]{
organization={Department of Finance, Copenhagen Business School},
            addressline={Solbjerg Plads~3}, 
            postcode={2000},
            city={Frederiksberg},             
            country={Denmark}}

\begin{abstract}
We consider an infinitely divisible random field indexed by $\mathbb{R}^d$, $d\in\mathbb{N}$, given as an integral of a kernel function with respect to a L\'evy basis with a L\'evy measure having a regularly varying right tail. First we show that the tail of its supremum over any bounded set is asymptotically equivalent to the right tail of the L\'evy measure times the integral of the kernel.
Secondly, when observing the field over an appropriately increasing sequence of continuous index sets, we obtain an extreme value theorem stating that the running supremum converges in distribution to the Fr\'echet distribution.
\end{abstract}



\begin{keyword}
Extreme value theory \sep 
L\'evy-based modeling \sep 
regular variation \sep 
geometric probability \sep 
random fields

\MSC[2010] 
Primary 60G70 \sep 
60G60;
Secondary 60E07 \sep
60D05
\end{keyword}

\end{frontmatter}


\noindent
{\it Declarations of interest:} None.\\[1ex]
\noindent
{\it Funding:} This research did not receive any specific grant from funding agencies in the public, commercial, or not-for-profit sectors. \\[5ex]
This work has been submitted to \emph{Stochastic Processes and their Applications}.

	\section{Introduction}

In this paper we consider the extremal behavior of a spatial L\'evy-driven moving average given by
\begin{equation}\label{eq:Xintro}
	X_v = \int_\Rd \ff \Lambda(\dd u),
	\qquad v\in \Rd,
\end{equation}
where $\Lambda$ is an infinitely divisible, independently scattered random measure on $\Rd$ (i.e. a L\'evy basis) and $f$ is an appropriate kernel function; see \cite[Theorem~2.7]{Rajput1989} for necessary and sufficient conditions guaranteeing its existence. L\'evy-driven models as in \eqref{eq:Xintro} form a very general modeling framework used for a wide range of purposes, including modeling of financial assets (\cite{Barndorff2001}), turbulent flows (\cite{Barndorff2004}), brain imaging data (\cite{Jonsdottir2013}) and wind power prices (\cite{Benth2018}). Estimators for its mean and variogram are suggested in \cite{RNielsen2019}, and central limit theorems for these estimators are also presented.

In this paper we assume that the L\'evy measure $\rho$ of the basis $\Lambda$
has a regularly varying right tail;
see for instance \cite[Appendix~A.3]{Embrecths1997}. The regularly varying distributions are in particular subexponential, and the class covers many interesting heavy-tailed distributions such as the Pareto, Cauchy, Loggamma, stable (of index $<2$) and, in particular, Fr\'echet distribution. Moreover (and essential to this paper), the set of regularly varying distributions coincide with the maximum domain of attraction of the Fr\'echet distribution; see \cite[Chapter~3]{Embrecths1997}.

In addition to assuming integrability of $f$ and regular variation of the right tail of $\rho$, we consider two different assumptions on the pair $(\Lambda,f)$.
In Assumption~\ref{ass:fullassumption1} we have minimal requirements on $\rho$ but we consider a H\"older continuous kernel $f$. In Assumption~\ref{ass:fullassumption2} we have minimal requirements on $f$, whereas we restrict consideration to L\'evy bases of finite variation, meaning in particular that the L\'evy measure $\rho$ has finite first moment in a neighborhood of $0$.

The first part of the paper concerns the asymptotic representation of the tail of $\sup_{v\in B} X_v$ for some fixed and bounded index set $B \subseteq \Rd$. With $\alpha>0$ denoting the regularly varying index of $\rho$ we show that
\begin{equation}\label{eq:supintro}
	\PP \bigl(\sup_{v\in B} X_v > x \bigr)
	\sim 
	\rho((x,\infty)) \int_\Rd \sup_{v\in B} \ffa \dd u
\end{equation}
as $x\to\infty$, where $\sim$ denotes asymptotic equivalence. In fact we show the more general result that the equivalence holds true even if we replace $X$ with $X + Y^1$, where $Y^1$ is an independent field with right tail lighter than $X$. To obtain the representation \eqref{eq:supintro} we use a result from \cite{RosinskiSamorodnitsky1993}, where we explicitly use that the right tail of $\rho$ is subexponential. 

Results similar to \eqref{eq:supintro} are found in the literature for one-dimensional regularly varying processes, and for spatial fields with lighter tails: In \cite{Fasen2005} a field
\begin{equation}\label{eq:fasenfield}
	X_t = \int_{\R_+ \times \R} f(r,t-s) \Lambda(\dd r, \dd s)
\end{equation}
is studied under the assumption of an underlying regularly varying L\'evy measure. Replacing $\Rd$ with $\R$ and $B$ with the fixed interval $[0,h]$, the claim \eqref{eq:supintro} is shown for the one-dimensional moving average. If, instead, the spatial L\'evy-driven field $X_v$ has a convolution equivalent L\'evy measure $\rho$ of strictly positive index (\cite{Cline1986,Cline1987,Pakes2004}), it is shown in \cite{RNielsen2016} that
\[
	\PP \bigl(\sup_{v\in B} X_v > x \bigr)
	\sim 
	\rho((x,\infty)) K \abs{B} ,
\] 
where $\abs{B}$ is Lebesgue measure of $B\subseteq \R^d$ and $K$ is a computable constant. A slightly more general result is given in \cite{Stehr2020b}, in which a space-time L\'evy model $X_{v,t}$ with a convolution equivalent L\'evy measure is considered. For a large class of functionals $\Psi$ acting on the field, it is shown that there are constants $c,C$ such that
\[
	\PP\bigl(\Psi(X_{v,t}) >x \bigr)
	\sim
	C \rho((x/c,\infty))
\]
as $x\to\infty$. For Gaussian random fields the distribution of the supremum can be approximated by the expected Euler characteristic of an excursion set (see \cite{Adler2007} and the references therein).

The second part of the paper concerns the asymptotic distribution of $\sup_{v\in C_n} X_v$ as $n\to\infty$, where $(C_n)$ is a sequence of index sets in $\Rd$ increasing appropriately. From extreme value theory for dependent stationary fields we know, assuming some mixing and anti-clustering conditions, that the distribution of the running maximum of a stationary field is determined by its marginal tail; see \cite{Embrecths1997,Leadbetter1983,Resnick2008} for detailed treatments of classical extreme value theory, and see \cite{Jakubowski2019,Soja2019,StehrRonnNielsen2020} for generalizations to stationary, discretely indexed $d$-dimensional spatial fields. In particular, if the marginal tail is in the maximum domain of attraction of the Fr\'echet distribution (or equivalently it is regularly varying), then the running maximum of the field converges to the Fr\'echet distribution. 
Thus, with \eqref{eq:supintro} in mind, we expect (and show) that the distribution of $\sup_{v\in C_n} X_v$ converges to the Fr\'echet distribution
if the index sets $C_n$ behave nicely:  
We require that $C_n$ is a union of a fixed number of connected convex bodies (convex, compact sets with non-empty interior) with intrinsic volumes sufficiently bounded relative to the volume of $C_n$; see \cite[Chapter~4]{Schneider1993} for an exposition of convex bodies and their intrinsic volumes. This includes the useful situation where a union of fixed convex bodies is scaled by an increasing real sequence.
With $\alpha>0$ denoting the regularly varying index of $\rho$ we then show that there are norming constants $(a_n)$ such that
\begin{equation}\label{eq:extremeintro}
	\PP \bigl(a_n^{-1} \sup_{v\in C_n} X_v > x \bigr)
	\to 
	\exp \bigl(-x^{-\alpha} \rho((1,\infty))\bigr)
\end{equation}
as $n\to\infty$. The result does not follow by the aforementioned papers on discretely indexed fields, as the appropriate mixing and anti-clustering conditions do not easily show for our continuously indexed field. Instead, the proof is based on a conditioning argument, where we apply an independent decomposition $X=Z + Y$ and condition on $Y=y$. Here $Z$ is a compound Poisson sum, which determines the supremum of $X$, and $Y$ represents the light-tailed and high-activity part of $X$.
 The limit \eqref{eq:extremeintro} is then obtained by first establishing the conditional result for $Z+y$ and then applying ergodic properties of the light-tailed field $Y$. This proof technique almost immediately implies the extended result that \eqref{eq:extremeintro} is also satisfied with $X$ replaced by $X + Y^1 + Y^2$, where the stationary and independent fields $Y^1$ and $Y^2$ satisfy that $Y^1$ is ergodic and that $X$ has heavier right tails than $Y^1$ and $\abs{Y^2}$.

Extremal results related to the one in the present paper are found in the literature in various forms. In \cite{Fasen2005} the running supremum $\sup_{t\in [0,T]} X_t$ of the one-dimensional moving average \eqref{eq:fasenfield} is shown to converge to the Fr\'echet distribution as $T\to\infty$. A slightly less general result is given in \cite{Rootzen1978}, in which the underlying L\'evy measure is assumed to be stable with index $<2$.

In \cite{StehrRonnNielsen2020} a $d$-dimensional L\'evy-driven field with a convolution equivalent L\'evy measure is considered under an asymptotic regime that, regarding the sequence of index sets, is identical to the present. Convolution equivalent distributions are in particular in the maximum domain of attraction of the Gumbel distribution, and thus, in this case, the running supremum $\sup_{v\in C_n} X_v$ converges to a power of the Gumbel distribution function $x\mapsto \exp(\e^{-x})$. The proof structure in \cite{StehrRonnNielsen2020} has some similarities with the proofs in the present paper, as both rely on conditioning on the light-tailed and heavy-activity part of the field. However, the obvious differences in both tail behavior and limits make the proofs substantially different.

The paper is organized as follows. In Section~\ref{sec:main} we formally define our L\'evy-driven field and provide assumptions on the basis, integration kernel and increasing index sets, before presenting the main results of the paper. 
In Section~\ref{sec:regvar} we give some useful results for regularly varying distributions, which are then used to show the tail representation of the supremum of the field in Section~\ref{sec:tail}. Section~\ref{sec:geometry} is devoted to geometric proofs related to the expansion of the index sets $(C_n)$, and proofs for the result on the running supremum $\sup_{v\in C_n} X_v$ are found in Section~\ref{sec:extreme}.

	\section{Definitions and main results}
	\label{sec:main}
In this section we formally define our random field and state sufficient assumptions after which we can present the main results.
Before doing so, we briefly clarify some notation used throughout the paper. We let $\abs{{}\cdot{}}$ denote size in the following sense: $\abs{v}$ is the Euclidean norm for a single (one- or multi-dimensional) point $v$, $\abs{A}$ is Lebesgue measure of a full-dimensional set $A\subseteq \Rd$, and $\abs{A}$ is the number of points in a discrete set $A\subseteq \Zd$. However, we will at times also use the notation $m$ for Lebesgue measure.

We consider a stationary L\'evy-driven random field $(X_v)_{v\in\Rd}$ given as a integral of a kernel function with respect to a L\'evy basis. A L\'evy basis is an infinitely divisible and independently scattered random measure. The random measure $\Lambda$ on $\Rd$ is independently scattered if for all disjoint Borel sets $(A_n)_{n\in\NN} \subseteq \Rd$ the random variables $(\Lambda(A_n))_{n\in\NN}$ are independent and furthermore satisfy $\Lambda(\cup_{n\in \NN} A_n)=\sum_{n\in \NN} \Lambda(A_n)$. The random measure $\Lambda$ is infinitely divisible if $\Lambda(A)$ is infinitely divisible for all Borel sets $A\subseteq \Rd$.

In this paper we assume that the L\'evy basis $\Lambda$ is stationary and isotropic. With $C(\lambda \dagger Y ) = \log \EE \e^{i\lambda Y}$ denoting the cumulant function for a random variable $Y$, this means that the random variable $\Lambda(A)$ has L\'evy-Khintchine representation
\begin{equation*}
	C(\lambda \dagger \Lambda(A))
	= i \lambda a \abs{A}
	- \frac{1}{2} \lambda^2 \theta \abs{A} + 
	\int_{A \times \R} \bigl( \e^{i\lambda x} - 1 - i\lambda x \II{[-1,1]}(x) \bigr) F(\du, \dd x)
\end{equation*}
for all Borel sets $A\subseteq \Rd$. Here $a\in \R$, $\theta \ge 0$ and $F$ is the product measure $m \otimes \rho$ of Lebesgue measure $m$ and a L\'evy measure $\rho$. We furthermore assume that the L\'evy measure has a regularly varying right tail: Let $\RA$ denote the set of regularly varying functions (at infinity) of index $-\alpha\in\R$, that is, $h\in\RA$ if
\[
	\frac{h(tx)}{h(x)} \to t^{-\alpha}
	\qquad \text{as }
	x\to\infty
\]
for all $t>0$. Note that this class of functions is closed under asymptotic equivalence (at infinity). We say that a distribution $G$ is regularly varying with index $\alpha >0$, writing $G\in\RV$, if its tail $\GG = 1-G\in \RA$ is a regularly varying function of index $-\alpha$. We use similar notation for regularly varying random variables, i.e. random variables with a regularly varying distribution. Concerning the L\'evy measure $\rho$, we thus assume that the right tail is a regularly varying distribution of index $\alpha>0$, which formally reads $\rho({}\cdot \cap (1,\infty))/\rho((1,\infty)) \in\RV$. Note that regularly varying distributions are in particular subexponential, hence $\RV \subseteq \mathcal{S}$; see for instance \cite[Appendix~A.3]{Embrecths1997}.

For the L\'evy basis $\Lambda$ above, we consider the stationary L\'evy-driven field $(X_v)_{v\in\Rd}$ defined~by 
\begin{equation}\label{eq:definitionX}
	X_v = \int_\Rd \ff \Lambda(\dd u) ,
\end{equation}
where $f$ is a positive kernel function.
It can be seen from \cite[Theorem~2.7]{Rajput1989} that the field is well-defined if only there is a $\gamma \in (0,1]$ such that $\int_{\abs{y}>1} \abs{y}^\gamma \rho(\dd y)$ is finite and such that the bounded integration kernel $f:\R^d\to[0,\infty)$ satisfies $\int_\Rd f^\gamma(u) \dd u < \infty$. Throughout the paper we normalize $f$ and furthermore require that it is bounded by a decreasing function only depending on $u$ through $\abs{u}$.

According to \cite[Theorem~2.6]{Potthoff2009} we can (and will) choose a separable version, also in the literature refereed to as a modification, of the random field $(X_v)_{v\in\Rd}$. Similarly we choose a separable version of $(X_v)_{v\in B}$ when we are dealing with a specific index set $B\subseteq \Rd$. Likewise, all other random fields that will be defined throughout the paper will be chosen to be separable versions.

As mentioned in the introduction, we will work under two different assumptions on the L\'evy basis and the kernel function. The assumptions differ in that one is less restrictive with respect to the L\'evy measure, and the other is less restrictive with respect to the integration kernel. We first give the minimal requirements on the pair $(\Lambda,f)$ before presenting the two sets of assumptions. This minimal Assumption~\ref{ass:minimalassumption}, in particular, implies that the field \eqref{eq:definitionX} is well-defined.

\setcounter{assum}{12}
\begin{assum}\label{ass:minimalassumption}
The L\'evy basis $\Lambda$ on $\Rd$ is stationary and isotropic with a L\'evy measure $\rho$ having a regularly varying right tail of index $\alpha > 0$. Moreover, there is $\gamma\in (0,\alpha)$ such that
\begin{equation}\label{eq:gammamomentrho}
	\int_{\abs{y}>1} \abs{y}^\gamma \rho (\dd y)<\infty .
\end{equation}
The integration kernel $f:\Rd\to[0,\infty)$ satisfying $f\le 1$ with $f(0)=1$ is lower semi-continuous and bounded from above by a decreasing c\`adl\`ag function $g$, in the sense that $f(u)\leq g(\abs{u})$ for all $u\in\Rd$. The function $g$ additionally satisfies
\begin{equation}\label{eq:gammamomentkernel}
	\int_\Rd g^{\gamma} (\abs{u}) \dd u < \infty ,
\end{equation}
where $\gamma \in (0,\alpha)\cap (0,1]$ satisfies \eqref{eq:gammamomentrho}.
\end{assum}

Note that the integrability in \eqref{eq:gammamomentrho} along the right tail of $\rho$ is already given from the fact that $\rho$ is regularly varying. In fact, $\int_1^\infty y^\gamma \rho(\dd y)<\infty$ for all $\gamma<\alpha$
(\cite[Proposition~A3.8]{Embrecths1997}). 
Hence, the integrability \eqref{eq:gammamomentrho} is simply a requirement on the existence of moments of the finite measure $\rho({}\cdot \cap (-\infty,-1))$. Furthermore, the integrability of $g^\gamma$ and the fact that $g$ is decreasing implies that
\[
	\int_\Rd \sup_{v\in B} g^{\gamma}(\abs{v-u}) \dd u < \infty
\]
for all fixed sets $B\subseteq \Rd$. Since $f(\cdot)\le g(\abs{\cdot})$ and $\alpha > \gamma$, the claim also holds when substituting $g^{\gamma}(\abs{v-u})$ with $g^{\alpha}(\abs{v-u})$, $f^{\gamma}(v-u)$ or $f^\alpha(v-u)$. Note that the lower semi-continuity of $f$ ensures that the supremum $\sup_{v\in B}$ can be replaced by a suitable countable supremum, making the integrand measurable.

\begin{example}
For any infinitely divisible distribution with a regularly varying right tail there is asymptotic equivalence between the tail of the distribution and the tail of the L\'evy measure; see \cite[Theorem~1]{Embrechts1979}. In particular, the tail of $\Lambda(A)$ will be regularly varying with index $\alpha$ if and only if the L\'evy measure $\rho$ is regularly varying with index $\alpha$.
\end{example}

\begin{example}
Let $0<\alpha<2$. An $\alpha$-stable random field is obtained by letting $\theta=0$ and defining
\[
	\rho(\dd x)=\big(p_-\abs{x}^{-(1+\alpha)}\I{x<0}+p_+x^{-(1+\alpha)}\I{x>0}\big)\dd x,
	\qquad p_+,p_- \ge 0\ , \ p_+ + p_->0,
\]
in the L\'evy-Khintchine representation of $\Lambda$. Here $\rho$ has a regularly varying right (and left) tail with index $\alpha$. 
\end{example}

\begin{example}
If the decreasing upper bound $g$ is a regularly varying function of index $-(d+\epsilon)/\gamma$ for some $\gamma,\epsilon>0$, then \eqref{eq:gammamomentkernel} is satisfied. This includes the simple case where $g(x)= c (1+x)^{-(d+\epsilon)/\gamma}$ is a power function of order $-(d+\epsilon)/\gamma$.
\end{example}

\begin{example}\label{ex:isotropic}
If $h:[0,\infty)\to[0,\infty)$ is a lower semi-continuous function, which satisfies $h\leq 1$, $h(0)=1$ and is bounded from above by a decreasing c\`adl\`ag function $g:[0,\infty)\to[0,\infty)$ satisfying \eqref{eq:gammamomentkernel}, then $f:\Rd\to[0,\infty)$ defined by $f(u)=h(\abs{u})$ will satisfy the requirement from Assumption~\ref{ass:minimalassumption}. With a kernel function of this type, the random field defined by \eqref{eq:definitionX} will be isotropic. In particular, $f$ can be defined by $f(u)=g(\abs{u})$. Note that $g$ is indeed lower semi-continuous, since it is decreasing and c\`adl\`ag.
\end{example}

For the first assumption we also require that $f$ is H\"older continuous.

\begin{assumption}\label{ass:fullassumption1}
The L\'evy basis $\Lambda$ and integration kernel $f$ satisfy Assumption~\ref{ass:minimalassumption}. Moreover, $f$ is  H\"older continuous with some index $\zeta>0$. That is, there is a constant $C$ such that 
\[
	\abs{f(u_1)-f(u_2)} \le C \abs{u_1-u_2}^\zeta
\]
for all $u_1,u_2\in \Rd$.
\end{assumption}

\begin{example}
Let $A$ be a symmetric, positive definite matrix and define $f$ by
\[
f(u)=\exp(-u^TA^{-1}u),
\]
where $u^T$ is a row vector, and $u$ is a column vector. Then $f(0)=1$, $f\leq 1$ and $f$ is H\"older continuous with index 1. Furthermore, $f(\cdot)\leq g(\abs{\cdot})$ with $g(x)=\e^{-x^2/\lambda}$, where $\lambda$ is the largest eigenvalue of $A$. In particular, defining $f(u)=\e^{-\sigma\abs{u}^2}$ yields an isotropic random field, cf. Example~\ref{ex:isotropic}.
\end{example}

The second assumption corresponds to the case where the L\'evy basis is of finite variation, by which we mean that the triple $(a,\theta,\rho)$ of its L\'evy-Khintchine representation satisfies that of a L\'evy process of finite variation.

\begin{assumption}\label{ass:fullassumption2}
The L\'evy basis $\Lambda$ and integration kernel $f$ satisfy Assumption~\ref{ass:minimalassumption}. Moreover, $\Lambda$ has L\'evy-Khintchine representation with $\theta=0$ and a L\'evy measure $\rho$ satisfying
\begin{equation}\label{eq:rhofinitevariation}
\int_{\abs{y}\leq 1}\abs{y}\rho(\dd y)<\infty .
\end{equation}
\end{assumption}

\begin{example}
An $\alpha$-stable random field with $0<\alpha<1$ clearly satisfies the condition in \eqref{eq:rhofinitevariation}.
\end{example}

We are now ready to state the first main result of the paper concerning the tail of $\sup X_v$. 

\begin{theorem}
\label{thm:tailtheorem0}
Let $(X_v)_\vR$ be a L\'evy-driven field given by \eqref{eq:definitionX} where the 
L\'evy basis $\Lambda$ and the kernel function $f$ satisfy either Assumption~\ref{ass:fullassumption1} or Assumption~\ref{ass:fullassumption2}.
Let $B\subseteq \Rd$ be a fixed bounded set. Then
\begin{equation}\label{eq:Xrhoequivalence0}
	\PP \bigl( \sup_{v\in B} X_v > x \bigr)
	\sim
	\rho((x,\infty)) \int_{\Rd} \sup_{v\in B} \ffa \dd u
\end{equation}
as $x\to\infty$.
\end{theorem}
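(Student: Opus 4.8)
The plan is to derive the tail asymptotics \eqref{eq:Xrhoequivalence0} by exploiting the subexponentiality of the right tail of $\rho$ together with a representation of the integral \eqref{eq:definitionX} as a (conditionally) Poissonian superposition of jump contributions. First I would decompose the L\'evy basis into a ``large-jump'' part $\Lambda_1$, carrying the mass of $\rho$ on $(1,\infty)$ (a compound Poisson random measure with finitely many atoms in any bounded region of $u$-space), and a ``rest'' part $\Lambda_0$, carrying all remaining mass; this induces an independent decomposition $X = Z + Y$ where $Z_v = \int_\Rd \ff\,\Lambda_1(\dd u)$ and $Y_v = \int_\Rd \ff\,\Lambda_0(\dd u)$. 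The field $Z$ is driven by a Poisson process of points $(u_i, z_i)$ on $\Rd\times(1,\infty)$ with intensity $\du\otimes\rho$, so that on $B$ we have $Z_v = \sum_i z_i f(v-u_i)$. Since $f\le g(\abs\cdot)$ with $g^\gamma$ integrable and $\gamma<\alpha$, only finitely many terms are relevant and $\sup_{v\in B} Z_v$ is dominated, up to a single large jump, by $\sup_{v\in B} z_i f(v-u_i)$ for the ``record'' point.

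The core step is then to show
\[
	\PP\bigl(\sup_{v\in B} Z_v > x\bigr) \sim \int_\Rd \rho\bigl((x,\infty)\bigr)\,\sup_{v\in B} f^\alpha(v-u)\,\du,
\]
which, after rewriting $\rho((x,\infty))\sup_{v}f^\alpha(v-u) = \rho((x/\sup_v f(v-u),\infty))(1+o(1))$ via regular variation, is a statement that a single large jump through the kernel dominates. For this I would invoke the result of \cite{RosinskiSamorodnitsky1993} on the supremum of a stochastic integral with respect to an infinitely divisible random measure whose L\'evy measure has a subexponential right tail: it gives precisely that $\PP(\sup_{v\in B} Z_v > x)$ is asymptotically the intensity measure of the set of jump configurations $(u,z)$ for which a single atom at $u$ with size $z$ already makes $\sup_{v\in B} z f(v-u) > x$, i.e. $\int_\Rd \rho((x/\sup_{v\in B} f(v-u),\infty))\,\du$. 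Applying Potter-type bounds and dominated convergence (with the integrable majorant built from $g^\gamma$) to exchange the limit $x\to\infty$ with the integral then yields the displayed equivalence with $\rho((x,\infty))$ factored out.

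It remains to transfer this from $Z$ to $X = Z + Y$. The idea is that $Y$ has a right tail lighter than that of $\rho$ — indeed lighter than any regularly varying tail of index close to $\alpha$ — because its L\'evy measure has bounded support on the positive side, so moments of all orders (or at least exponential moments in the relevant range under either Assumption~\ref{ass:fullassumption1} or \ref{ass:fullassumption2}) are finite; I would establish a bound of the form $\PP(\sup_{v\in B} Y_v > x) = o(\rho((x,\infty)))$ using a Markov/Chernoff estimate together with the separability reduction to a countable supremum and a chaining or Hölder-continuity argument (Assumption~\ref{ass:fullassumption1}) respectively a finite-variation pathwise bound (Assumption~\ref{ass:fullassumption2}). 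Given such a bound, a standard subexponential-type convolution lemma — $\PP(\sup(Z+Y)>x)\sim\PP(\sup Z > x)$ when the ``$Y$'' part is negligible and the ``$Z$'' part is (essentially) subexponential — finishes the proof; the two-sided inequalities come from splitting on whether $\sup_{v\in B}Y_v$ exceeds $\epsilon x$ and letting $\epsilon\downarrow 0$ after $x\to\infty$.

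The main obstacle I anticipate is the careful application of \cite{RosinskiSamorodnitsky1993} in the spatial, uncountable-index setting: one must justify that the supremum over $v\in B$ is measurable and that the relevant ``one big jump'' asymptotics genuinely hold for the functional $\omega\mapsto\sup_{v\in B}Z_v(\omega)$ rather than for a fixed linear functional. This is where the separable-version choice, the lower semi-continuity of $f$, and the two distinct regularity assumptions (Hölder continuity versus finite variation) all enter, and it is the step that requires the most care in matching hypotheses; the dominated-convergence interchange and the $Y$-negligibility bound are comparatively routine given the $\gamma$-moment conditions \eqref{eq:gammamomentrho}--\eqref{eq:gammamomentkernel}.
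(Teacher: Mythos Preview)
Your approach is correct but takes a detour that the paper avoids. You split $X = Z + Y$, apply Rosi\'nski--Samorodnitsky to the compound Poisson part $Z$, and then run a separate convolution/negligibility argument to pass from $Z$ to $X$. The paper instead applies \cite[Theorem~3.1]{RosinskiSamorodnitsky1993} directly to the full field $X$: that theorem requires only that the process be infinitely divisible with a L\'evy measure whose tail function $H$ is subexponential and that $\sup_{v\in B}\abs{X_v}<\infty$ almost surely --- there is no need to strip off the non-compound-Poisson part first. Consequently the paper's proof reduces to (i) verifying almost-sure boundedness of $(X_v)_{v\in B}$, which is precisely where Assumptions~\ref{ass:fullassumption1} and~\ref{ass:fullassumption2} enter via several preparatory lemmas, and (ii) the Potter-bound/dominated-convergence computation identifying $H(x)\sim\rho((x,\infty))\int_{\Rd}\sup_{v\in B}\ffa\,\du$, which you also outline. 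Your route works and you correctly identify the separability/measurability issue as the point requiring care, but it effectively proves the content of Theorem~\ref{thm:tailtheorem} (with the internal field $Y$ playing the role of $Y^1$) inside the proof of Theorem~\ref{thm:tailtheorem0}; the paper's direct application is cleaner and keeps the two statements logically separate. What your decomposition does buy is a more transparent ``single large jump'' picture, since $Z$ is literally a Poisson superposition; the cost is the extra moment bound on $\sup_{v\in B} Y_v$ (which the paper establishes only later, for other purposes) and a sandwich argument that must also control $\inf_{v\in B} Y_v$ for the lower bound, a point your sketch leaves implicit.
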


In fact, we show the more general tail result below.
In this theorem, and in the remainder of the paper, we use the notation $y_+=y \I{y\ge 0}$ and $y_+^\gamma = (y_+)^\gamma$ for any $y\in\R$.

\begin{theorem}
\label{thm:tailtheorem}
Let $(X_v)_\vR$ be a L\'evy-driven field given by \eqref{eq:definitionX} where the 
L\'evy basis $\Lambda$ and the kernel function $f$ satisfy either Assumption~\ref{ass:fullassumption1} or Assumption~\ref{ass:fullassumption2}. 
Let $B\subseteq \Rd$ be a fixed bounded set,
and let $(Y_v^1)_v$ be a field independent of $(X_v)_v$ satisfying
\begin{equation}\label{eq:tildeYassumption}
	\EE \bigl(\sup_{v\in B} Y_v^1 \bigr)_+^\beta < \infty
\end{equation}
for some $\beta > \alpha$. Then
\begin{equation*}
	\PP \bigl( \sup_{v\in B} (X_v + Y_v^1) > x \bigr)
	\sim
	\rho((x,\infty)) \int_{\Rd} \sup_{v\in B} \ffa \dd u
\end{equation*}
as $x\to\infty$.
\end{theorem}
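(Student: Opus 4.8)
The plan is to locate the single large jump of $\Lambda$ that produces the extreme value, to bound its effect from above by the Rosiński--Samorodnitsky tail formula for infinitely divisible processes and from below by a single--big--jump estimate using the Mecke equation, and to check that the independent, lighter--tailed, stationary field $Y^1$ changes nothing. Throughout I would work with a fixed countable dense set $D\subseteq B$ (the separable version), writing $F(u):=\sup_{v\in D}f(v-u)$, which equals $\sup_{v\in B}f(v-u)$ because $f$ is lower semi-continuous (so $\{f>a\}$ is open), and using $\int_\Rd F(u)^\gamma\,\du<\infty$ and $\int_\Rd F(u)^\alpha\,\du<\infty$ as recorded after Assumption~\ref{ass:minimalassumption}. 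As a first reduction, let $N=\{(u_i,z_i)\}$ be the Poisson process on $\Rd\times(1,\infty)$ with intensity $\du\,\rho(\dd z)$ formed by the jumps of $\Lambda$ of size exceeding $1$ (finite since $\rho((1,\infty))<\infty$), and split $X_v=Z_v+X_v'$ with $Z_v=\sum_i z_i f(v-u_i)\ge 0$ and $X'$ gathering the Gaussian part, the drift, the jumps of size $\le 1$ and the negative jumps of size $\le -1$; by the L\'evy--It\^o decomposition $X'$ is independent of $N$. No jump of size $\le 1$ can raise a coordinate of $X'$ by more than $1$, and under Assumption~\ref{ass:fullassumption1} the H\"older kernel makes the Gaussian--drift part sample continuous, hence a.s.\ bounded on $B$ with Gaussian tails; consequently $\EE\exp(\lambda\sup_{v\in B}X_v')<\infty$ for all $\lambda>0$, so in particular $\EE(\sup_{v\in B}X_v')_+^\beta<\infty$. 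Replacing $Y^1$ by $X'+Y^1$ (still independent of $Z$, still jointly stationary with $Z$, still satisfying \eqref{eq:tildeYassumption}), it suffices to treat the nonnegative field $Z$ in place of $X$.

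For the heavy part, $(Z_v)_{v\in D}$ is infinitely divisible without Gaussian component, with path--space L\'evy measure $\nu$ the image of $\du\,\rho(\dd z)|_{(1,\infty)}$ under $(u,z)\mapsto(zf(v-u))_{v\in D}$. The functional $Q(\omega)=\sup_{v\in D}\omega(v)$ is measurable, subadditive, and a.s.\ finite on $Z$, and for $t>1$
\[
	\nu(\{Q>t\})=\int_{\{F(u)>0\}}\rho\bigl((t/F(u),\infty)\bigr)\,\du .
\]
Using pointwise regular variation of $\rho((\cdot,\infty))$, a Potter bound $\rho((t/F(u),\infty))/\rho((t,\infty))\le C\,F(u)^{\alpha-\varepsilon}\le C\,F(u)^\gamma$ for $t$ large (here $F\le 1$ and $\varepsilon<\alpha-\gamma$), and dominated convergence with dominant $F^\gamma$, I would get $\nu(\{Q>t\})\sim\rho((t,\infty))\int_\Rd F(u)^\alpha\,\du$; in particular $\nu(\{Q>\cdot\})\in\RA$, so its tail is subexponential, and the theorem of \cite{RosinskiSamorodnitsky1993} yields $\PP(\sup_{v\in B}Z_v>t)\sim\nu(\{Q>t\})\sim\rho((t,\infty))\int_\Rd\sup_{v\in B}\ffa\,\du$. (This is already Theorem~\ref{thm:tailtheorem0} once the reduction is undone.)

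For the upper bound with $Y^1$, fix $\varepsilon\in(0,1)$; if $\sup_{v\in D}(Z_v+Y_v^1)>x$ then $Z_v>(1-\varepsilon)x$ or $Y_v^1>\varepsilon x$ for some $v\in D$, so
\[
	\PP\bigl(\sup_{v\in D}(Z_v+Y_v^1)>x\bigr)\le\PP\bigl(\sup_{v\in D}Z_v>(1-\varepsilon)x\bigr)+(\varepsilon x)^{-\beta}\,\EE\bigl(\sup_{v\in B}Y_v^1\bigr)_+^\beta ,
\]
where the first term is $\sim(1-\varepsilon)^{-\alpha}\rho((x,\infty))\int_\Rd\sup_{v\in B}\ffa\,\du$ by the previous paragraph and regular variation, and the second is $o(\rho((x,\infty)))$ since $\beta>\alpha$ and $\rho((\cdot,\infty))\in\RA$; letting $\varepsilon\downarrow 0$ gives the right $\limsup$. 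For the lower bound, fix $\varepsilon\in(0,1)$ and $K>0$, and select measurably $v^*(u)\in D$ with $f(v^*(u)-u)\ge(1-\varepsilon)F(u)$. Writing $Z_v=Z_v^{(i)}+z_if(v-u_i)$ with $Z^{(i)}\ge 0$ the sum over $N$ minus its $i$-th point, one has $\sup_{v\in D}(Z_v+Y_v^1)\ge Z^{(i)}_{v^*(u_i)}+(1-\varepsilon)z_iF(u_i)+Y^1_{v^*(u_i)}$ for every $i$; let $\mathcal{M}_x$ count the indices $i$ with $(1-\varepsilon)z_iF(u_i)>x+K$ and $Z^{(i)}_{v^*(u_i)}+Y^1_{v^*(u_i)}>-K$, a variable with finite mean, so that $\{\mathcal{M}_x\ge 1\}\subseteq\{\sup_{v\in D}(Z_v+Y_v^1)>x\}$. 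By the Mecke equation, $Z^{(i)}$ at the Palm point of $N$ is an independent copy of $Z$, whence, by stationarity of $(Z,Y^1)$,
\begin{align*}
	\EE\mathcal{M}_x
	&=\PP(Z_0+Y_0^1>-K)\int_{\{F(u)>0\}}\rho\Bigl(\tfrac{x+K}{(1-\varepsilon)F(u)},\infty\Bigr)\,\du\\
	&\sim\PP(Z_0+Y_0^1>-K)\,(1-\varepsilon)^{\alpha}\,\rho((x,\infty))\int_\Rd\sup_{v\in B}\ffa\,\du ,
\end{align*}
while the multivariate Mecke equation bounds $\EE[\mathcal{M}_x(\mathcal{M}_x-1)]$ by the square of $\int_{\{F(u)>0\}}\rho(\tfrac{x+K}{(1-\varepsilon)F(u)},\infty)\,\du=o(1)$, hence by $o(\rho((x,\infty)))$. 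From $\PP(\mathcal{M}_x\ge 1)\ge\EE\mathcal{M}_x-\tfrac12\EE[\mathcal{M}_x(\mathcal{M}_x-1)]$ one gets $\liminf\ge\PP(Z_0+Y_0^1>-K)(1-\varepsilon)^\alpha$; letting $K\to\infty$ (so this probability tends to $1$, since $Z_0+Y_0^1$ is a.s.\ finite) and then $\varepsilon\downarrow 0$ closes the argument.

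The hard part is the lower bound. The left tail of $\rho$ is only assumed to have a $\gamma$-th moment with $\gamma<\alpha$, so it may be far heavier than $x^{-\alpha}$; any bound that controls infima of the field globally (e.g.\ replacing $\sup_v(Z^{(i)}_v+Y^1_v)$ by $\inf_v(\cdot)$) is therefore hopeless. The point of the Mecke/single--big--jump argument is that the residual field around the dominating jump is distributed as the marginal variable $Z_0+Y_0^1$, which is a.s.\ finite regardless of how heavy the left tail of $\rho$ is. The remaining technical points are: making the second (overlap) moment $\EE[\mathcal{M}_x(\mathcal{M}_x-1)]$ genuinely negligible, the measurable selection of $v^*(u)$, and the reduction stripping off the Gaussian--drift component, together with the repeated uniform--convergence (Potter) estimates needed to pass limits through the integral over $u$.
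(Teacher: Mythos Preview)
Your derivation of the tail of $\sup_{v\in B}Z_v$ via Rosi\'nski--Samorodnitsky together with Potter bounds and dominated convergence is correct and is exactly how the paper proves Theorem~\ref{thm:tailtheorem0}. Your upper bound for the perturbed field, via the split $\{\sup Z>(1-\varepsilon)x\}\cup\{\sup Y^1>\varepsilon x\}$ and Markov's inequality, is also fine.

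The lower bound, however, has a genuine gap: you invoke stationarity of $Y^1$ (``still jointly stationary with $Z$'', and later ``by stationarity of $(Z,Y^1)$''), but the theorem makes no such assumption. Without it, the Mecke calculation yields
\[
\EE\mathcal{M}_x=\int_{\{F(u)>0\}}\rho\Bigl(\tfrac{x+K}{(1-\varepsilon)F(u)},\infty\Bigr)\,\PP\bigl(Z_{v^*(u)}+Y^1_{v^*(u)}>-K\bigr)\,\du,
\]
and the probability on the right depends on $v^*(u)\in D$. You can recover a lower bound by replacing it with $\PP(\inf_{v\in B}(Z_v+Y^1_v)>-K)$, but then---since $Z\ge 0$ and $\sup_B|X'|<\infty$ a.s.\ by the boundedness lemmas---you are back to needing $\inf_{v\in B}Y^1_v>-\infty$ a.s., which is precisely the implicit requirement behind the much simpler sandwich the paper uses.

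The paper's route is shorter and sidesteps your whole Mecke construction. It does \emph{not} strip off $X'$; instead it applies Rosi\'nski--Samorodnitsky directly to $X$ (this is Theorem~\ref{thm:tailtheorem0}), so the heavy negative jumps of $\rho$ stay inside $X$ and are already absorbed by that result. Then Theorem~\ref{thm:tailtheorem} follows from the sandwich
\[
\sup_{v\in B}X_v+\inf_{v\in B}Y^1_v\ \le\ \sup_{v\in B}(X_v+Y^1_v)\ \le\ \sup_{v\in B}X_v+\sup_{v\in B}Y^1_v,
\]
together with $\FF(x-y)/\FF(x)\to 1$ and dominated convergence using the Potter-type bound $\FF(x-y)\le C\FF(x)(K+y_+^\beta)$ of Lemma~\ref{lem:regvaryingbound}; only $\EE(\sup_B Y^1)_+^\beta<\infty$ (and its trivial consequence $\EE(\inf_B Y^1)_+^\beta<\infty$) is needed. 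Your concern that ``any bound that controls infima of the field globally is hopeless'' is thus a consequence of your own reduction $X=Z+X'$: once you fold $X'$ into $Y^1$, the perturbing field acquires heavy negative tails; the paper avoids this by never making that reduction.
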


Next we turn to the assumption on the expansion of the index sets $(C_n)$ in $\Rd$ relating to the extremal result. For all $n\in\NN$, we require that $C_n$ is a $p$-convex set as defined below. In the following, a compact convex set with non-empty interior is called a \emph{convex body}; see e.g. \cite[Chapter~4]{Schneider1993}.

\begin{definition}
A set $C\subseteq \R^d$ is said to be $p$-convex, if it is connected and has the form
\[
	C=\bigcup_{i=1}^{p} \overline C_i\,,
\]
where $\overline C_1,\ldots,\overline C_{p}$ are convex bodies in $\R^d$. 
\end{definition}

We give the sufficient assumption on the index sets $(C_n)$ used in Theorems~\ref{thm:extremelevy} and \ref{thm:extremelevyv2} below. Due to stationarity of all fields involved,
we can without loss of generality in later results assume that $0\in C_n$ for all $n\in\NN$. Although not formulated in the assumption, this will be assumed in the remainder of the paper. In the assumption, conditions on the so-called \emph{intrinsic volumes} are stated. For $j=0,\dots,d$, the intrinsic volumes $V_j(C)$ describe the geometry of the convex body $C$. For instance, $V_0(C)=1$, $V_1(C)$ is proportional to the mean width, $V_{d-1}(C)$ is half the surface area, and $V_d(C)=\abs{C}$ equals the volume of $C$. Furthermore, the functionals $V_j:\mathcal{K}\to \R$, where $\mathcal{K}$ is the set of all convex bodies, satisfy some important properties, among which we mention
\begin{enumerate}[label=\normalfont(\roman*)] 
	\item They are non-negative, i.e. $V_j(C)\geq 0$ for all $C\in\mathcal{K}$.
	\item They are homogeneous, i.e.  $V_j(\gamma C)=\gamma^j V_j(C)$ for all $\gamma>0$.
	\item They are monotone, i.e. $C\subseteq D$ implies $V_j(C)\leq V_j(D)$.
\end{enumerate}
For a greater exposition of convex bodies and their intrinsic volumes we refer to \cite[Chapter~4]{Schneider1993}.

\begin{assumption}\label{ass:Cnassumption}
The sequence $(C_n)_{n\in\NN}$ consists of $p$-convex bodies, where 
\[
	C_n=\bigcup_{i=1}^{p} C_{n,i}
\]
and $\abs{C_n}\to \infty$ as $n\to\infty$. Furthermore,
\begin{equation}\label{eq:boundedintvolumes}
	\frac{\sum_{i=1}^pV_j(C_{n,i})}{\abs{C_n}^{j/d}}
	\quad
	\text{is bounded in $n$ for each }j=1,\dots,d-1 .
\end{equation}
\end{assumption}

\begin{example}\label{ex:simpleCset}
Let $C=\cup_{i=1}^p \overline C_i$ be a $p$-convex set and define the sequence $(C_n)_{n\in\NN}$ by
\[
	C_n=r_n  C=\bigcup_{i=1}^p r_n \overline C_i ,
\]
where $r_n\uparrow \infty$ as $n\to\infty$. Then 
$\abs{C_n} = \abs{r_n C}=r_n^d \abs{C}$
and $V_j(r_n \overline C_i) = r_n^j V_j(\overline C_i)$ for $j=0,\dots,d$. In particular,
\[
	\frac{\sum_{i=1}^p V_j(r_n \overline C_i)}{\abs{C_n}^{j/d}}
	= \frac{\sum_{i=1}^p V_j(\overline C_i)}{\abs{C}^{j/d}}
\]
is constant in $n$ for all $j=1,\dots,d-1$. Thus the sequence $(C_n)_{n\in\NN}$ satisfies Assumption~\ref{ass:Cnassumption}.
\end{example}

Before presenting the last main results, we recall that the distributions in $\RV$ are exactly the distributions in the maximum domain of attraction $\Fre$ of the Fr\'echet distribution $\Phi_{\alpha}$, where
$
	\Phi_{\alpha}(x) = \exp(-x^{-\alpha})\I{x\ge 0} ;
$
see \cite[Chapter~3]{Embrecths1997}.
In other words, we know that there are norming constants $(\tilde a_n)_n$ with $0<\tilde a_n\to\infty$ such that
\begin{equation*}
	n\rho((\tilde a_n x, \infty)) \to x^{-\alpha}\rho((1,\infty))
\end{equation*}
for all $x>0$. In the remainder of the paper, we let $(a_n)_n$ be a sequence of norming constants of $\rho$ relative to 
$\abs{C_n}$, i.e.
\begin{equation}\label{eq:normingconstants}
	\abs{C_n}\rho((a_n x, \infty)) \to x^{-\alpha}\rho((1,\infty)) 
\end{equation}
for all $x>0$.

\begin{theorem}\label{thm:extremelevy}
Let $(X_v)_\vR$ be a L\'evy-driven field given by \eqref{eq:definitionX} where the 
L\'evy basis $\Lambda$ and the kernel function $f$ satisfy either Assumption~\ref{ass:fullassumption1} or Assumption~\ref{ass:fullassumption2}.
Let $(C_n)_{n\in\NN}$ be a sequence of sets in $\Rd$ satisfying Assumption~\ref{ass:Cnassumption},
and let $a_n$ be the norming constants of the L\'evy measure $\rho$ relative to $\abs{C_n}$, i.e.
$\lim_{n} \abs{C_n} \rho((a_n x , \infty)) = x^{-\alpha} \rho((1,\infty))$ for all $x>0$. 
Then, as $n\to\infty$,
\begin{equation*}
	\PP\Bigl( a_n^{-1}  \sup_{v\in C_n} X_v  \le x \Bigr)
	\to \exp\Bigl( - x^{-\alpha}\rho((1,\infty)) \Bigr)
\end{equation*}
for all $x>0$.
\end{theorem}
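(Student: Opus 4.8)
The plan is to reduce the continuous-index extreme value statement to a limit that can be controlled via the single-set tail asymptotics of Theorem~\ref{thm:tailtheorem0}, using the independent decomposition $X = Z + Y$ announced in the introduction and a conditioning-plus-ergodicity argument. First I would fix the decomposition: write the L\'evy basis as $\Lambda = \Lambda_1 + \Lambda_2$, where $\Lambda_1$ carries the large jumps (jump sizes exceeding a fixed threshold, giving a compound Poisson component) and $\Lambda_2$ carries the light-tailed remainder; this induces $X = Z + Y$ with $Z$ driven by $\Lambda_1$ and $Y$ driven by $\Lambda_2$, independent of one another, both stationary, and with $Y$ having all moments of order $\beta$ for some $\beta>\alpha$ finite (so that Theorem~\ref{thm:tailtheorem} applies with $Y^1 = Y$). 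Since $\rho((1,\infty))$ is unchanged under replacing $\rho$ by the large-jump part up to the threshold, the norming constants $a_n$ are insensitive to the choice of threshold.

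Next I would establish the \emph{conditional} extreme value limit: for (a.e.) fixed realization $Y = y$, show that
\[
	\PP\Bigl( a_n^{-1} \sup_{v\in C_n} (Z_v + y_v) \le x \,\Big|\, Y = y \Bigr)
	\to \exp\bigl( -x^{-\alpha}\rho((1,\infty)) \bigr).
\]
Here $Z$ is a finite sum of ``bump'' contributions $f(v - U_j)\,\Gamma_j$ over the points $(U_j,\Gamma_j)$ of a Poisson process, where $\Gamma_j$ is a large jump. Because $C_n$ is $p$-convex with $\abs{C_n}\to\infty$ and controlled intrinsic volumes (Assumption~\ref{ass:Cnassumption}), the geometric estimates from Section~\ref{sec:geometry} let me tile $C_n$ (up to a boundary layer whose measure is $o(\abs{C_n})$, by \eqref{eq:boundedintvolumes}) into $\sim \abs{C_n}/\abs{Q}$ translates of a fixed small cube $Q$ and argue asymptotic independence across tiles: a single large jump at $U_j$ influences $\sup_v (Z_v+y_v)$ over $C_n$ essentially only through the block containing $U_j$, because $f$ decays (bounded by the integrable $g$), so cross-block contamination is negligible in the limit. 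The per-block exceedance probability is governed by the single-set tail asymptotic: by Theorem~\ref{thm:tailtheorem0} (applied with the light-tailed shift $y$ absorbed, which is legitimate since $\sup_{v\in Q} y_v$ is finite a.s.),
$\PP(\sup_{v\in Q} (Z_v + y_v) > a_n x) \sim \rho((a_n x,\infty)) \int \sup_{v\in Q} f^\alpha \,\du$,
and summing $\sim \abs{C_n}/\abs{Q}$ such blocks, using \eqref{eq:normingconstants} and letting $\abs{Q}\to 0$ so that $\abs{Q}^{-1}\int \sup_{v\in Q} f^\alpha(v-u)\,\du \to \int f^\alpha(u)\,\du$... wait, $f^\alpha$ integrated against the cube-supremum, normalized by $\abs{Q}$, tends to $\int_{\Rd} f^\alpha(u)\,\du$? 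No --- crucially $\int_{\Rd} f^\alpha(u)\,\du$ appears once per jump, not per block; the correct bookkeeping is that the \emph{total} expected number of blocks with an exceedance tends to $x^{-\alpha}\rho((1,\infty)) \cdot \bigl(\int_{\Rd} f^\alpha(u)\,\du \big/ \int_{\Rd} f^\alpha(u)\,\du\bigr)$, i.e. the kernel integral cancels between the block-level asymptotic constant and the renormalization built into relating $\sup_{v\in C_n}$ exceedances to the intensity $\abs{C_n}\rho$. I would make this precise by a Poisson-approximation / inclusion-exclusion argument on the number of exceeding blocks, showing it converges to a Poisson random variable with mean $x^{-\alpha}\rho((1,\infty))$, so that the probability of no exceedance converges to $\exp(-x^{-\alpha}\rho((1,\infty)))$.

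Finally I would remove the conditioning. Having shown the conditional probability converges to the constant $\exp(-x^{-\alpha}\rho((1,\infty)))$ for a.e.\ $y$ (or at least that it is sandwiched between quantities converging to it, uniformly enough), dominated convergence over the law of $Y$ yields the unconditional limit. The ergodic properties of $Y$ enter to guarantee that the relevant functionals of $y$ (e.g.\ $\sup$ over growing blocks, or spatial averages controlling the light-tailed shift) concentrate, so the boundary and contamination error terms are $o(1)$ almost surely rather than merely in probability; a stationary L\'evy-driven field driven by $\Lambda_2$ is mixing, hence ergodic, which suffices. The main obstacle I anticipate is the second step: making the asymptotic-independence-across-blocks estimate uniform in the conditioning variable $y$ and simultaneously controlling the boundary effects of the general $p$-convex $C_n$ via the intrinsic-volume bound \eqref{eq:boundedintvolumes} --- i.e.\ showing that a single jump's influence genuinely localizes and that the number of ``double-counted'' or boundary blocks is negligible after normalization. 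This is where the geometric lemmas of Section~\ref{sec:geometry} and the decay bound $f \le g$ with $\int g^\gamma < \infty$ must be combined carefully.
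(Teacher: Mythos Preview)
Your overall strategy matches the paper's: decompose $X=Z+Y$ with $Z$ compound Poisson and $Y$ light-tailed, condition on $Y=y$, tile $C_n$ into blocks, exploit per-block tail asymptotics together with approximate independence across blocks, use ergodicity of $Y$ to control the $y$-dependent terms, and finish by dominated convergence over the law of $Y$. That skeleton is exactly right.

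However, there is a genuine gap at the step you yourself flag as confusing. You propose shrinking blocks, $\abs{Q}\to 0$, and then try to make the kernel integral cancel; this goes in the wrong direction and your attempted bookkeeping does not close. The mechanism the paper uses is the opposite: one works with large cubes $C_L(0)$ and proves (Lemma~\ref{lem:Lintegralequivalence}) that
\[
\int_{\Rd}\sup_{v\in C_L(0)} f^\alpha(v-u)\,\dd u \sim L^d\qquad (L\to\infty),
\]
so the per-block tail constant is asymptotically the block \emph{volume}. With roughly $\abs{C_n}/L^d$ blocks, the product of block volume and block count gives $\abs{C_n}$, which combines with $\abs{C_n}\rho((a_nx,\infty))\to x^{-\alpha}\rho((1,\infty))$ to produce the Fr\'echet exponent. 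In the paper this is implemented via a three-scale limit $n\to\infty$, then $k\to\infty$, then $L\to\infty$, with the geometric tiling of Section~\ref{sec:geometry} (side length $t_{n,k,L}$) supplying the approximate-independence scale and the inner grid $(L\ZZ)^d$ supplying the cubes $C_L(v)$ on which the tail asymptotic is applied. If you try $\abs{Q}\to 0$ instead, the per-block constant tends to $\int f^\alpha$, the number of blocks grows like $\abs{C_n}/\abs{Q}$, and you are left with a spurious factor $\int f^\alpha/\abs{Q}\to\infty$ that does not cancel.

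A second, smaller gap: your ``asymptotic independence across tiles'' needs a concrete device. The paper uses two ingredients. For the upper bound it introduces the truncated field $Z^{(t)}_v=\int_{\abs{v-u}<t}f(v-u)\,\Lambda_1(\dd u)$, which is genuinely independent across $2t$-separated blocks and gives a clean anti-clustering bound (Lemma~\ref{lem:anticlustering}). For the mixing across the large-scale boxes $I_z$ it proves a quantitative lemma (Lemma~\ref{lem:mixingZ}) by splitting $Z$ over $\Aa_n$ and $\Aa_n^c$ and bounding the cross terms via the decay of $g$ and Lemma~\ref{lem:Steinerintegral}. Finally, the $y$-shift is not absorbed block-by-block as you suggest (which would not be uniform in $v$), but handled by the ergodic theorem for $Y$ (Theorem~\ref{thm:mixingY}) applied to spatial averages of $\PP(\sup_{C_L(v)}(Z_u+y_u)>a_nx)$; see Lemma~\ref{thm:partialy}.
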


The proof techniques used to show Theorem~\ref{thm:extremelevy} almost immediately give us the following more general result. In its formulation, $B(t)=\{u\in\Rd\::\: \abs{u}\le t\}$ denotes the closed ball with radius $t$ and center in the origin $0\in\Rd$. This notation will be used throughout the paper. 

\begin{theorem}\label{thm:extremelevyv2}
Let $(X_v)_\vR$ be a L\'evy-driven field given by \eqref{eq:definitionX} where the 
L\'evy basis $\Lambda$ and the kernel function $f$ satisfy either Assumption~\ref{ass:fullassumption1} or Assumption~\ref{ass:fullassumption2}.
Let $(C_n)_{n\in\NN}$ be a sequence of sets in $\Rd$ satisfying Assumption~\ref{ass:Cnassumption},
and let $a_n$ be the norming constants of the L\'evy measure $\rho$ relative to $\abs{C_n}$, i.e.
$\lim_{n} \abs{C_n} \rho((a_n x , \infty)) = x^{-\alpha} \rho((1,\infty))$ for all $x>0$. Furthermore, let $(Y_v^1)_v$ and $(Y_v^2)_v$ be independent stationary fields independent of $(X_v)_v$, where $(Y_v^1)$ is ergodic and satisfies
\begin{equation}\label{eq:Y1integrability}
	\EE \bigl(\sup_{v\in B(1)} Y_v^1 \bigr)_+^{\beta_1} < \infty
\end{equation}
for some $\beta_1 > \alpha$, and $(Y_v^2)$ satisfies
\begin{equation}\label{eq:Y2integrability}
	\EE  \bigl(\sup_{v\in B(1)} \abs{Y_v^2}\bigr)^{\beta_2} < \infty
\end{equation}
for some $\beta_2>\alpha$.
Then, as $n\to\infty$,
\begin{equation*}
	\PP\Bigl( a_n^{-1}  
	\sup_{v\in C_n} \bigl( X_v + Y_v^1 + Y_v^2 \bigr)  
	\le x \Bigr)
	\to \exp\Bigl( - x^{-\alpha}\rho((1,\infty)) \Bigr)
\end{equation*}
for all $x>0$. 
\end{theorem}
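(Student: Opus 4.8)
The plan is to reduce Theorem~\ref{thm:extremelevyv2} to Theorem~\ref{thm:extremelevy} by a conditioning and sandwiching argument, exploiting that the additive fields $Y^1,Y^2$ are asymptotically negligible at scale $a_n$. First I would record two consequences of the moment assumptions \eqref{eq:Y1integrability} and \eqref{eq:Y2integrability}. Writing $C_n$ as a union of $O(\abs{C_n})$ unit balls (this is where Assumption~\ref{ass:Cnassumption} enters: the number of unit balls needed to cover $C_n$ is comparable to $\abs{C_n}$ because the intrinsic volumes of the pieces are controlled), a union bound together with the Markov inequality applied to $(\sup_{v\in B(1)}\abs{Y^i_v})^{\beta_i}$ and stationarity gives, for any $\delta>0$,
\[
	\PP\bigl(\sup_{v\in C_n} \abs{Y^2_v} > \delta a_n\bigr)
	\;\le\; O(\abs{C_n})\,\delta^{-\beta_2} a_n^{-\beta_2}\,
	\EE\bigl(\sup_{v\in B(1)}\abs{Y^2_v}\bigr)^{\beta_2}
	\;\longrightarrow\; 0,
\]
since $\abs{C_n} a_n^{-\beta_2}\to 0$ by \eqref{eq:normingconstants} and $\beta_2>\alpha$; the same bound holds for $(Y^1_v)_+$ using $\beta_1>\alpha$. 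Thus $a_n^{-1}\sup_{v\in C_n}\abs{Y^1_v+Y^2_v}\to 0$ in probability, or more precisely the event $\{\sup_{v\in C_n}(\abs{Y^1_v}+\abs{Y^2_v}) \le \delta a_n\}$ has probability tending to $1$ for every fixed $\delta>0$.

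With this in hand, the sandwich is immediate. On the high-probability event above we have
\[
	\sup_{v\in C_n} X_v - \delta a_n
	\;\le\;
	\sup_{v\in C_n}\bigl(X_v + Y^1_v + Y^2_v\bigr)
	\;\le\;
	\sup_{v\in C_n} X_v + \delta a_n ,
\]
so that
\[
	\PP\Bigl(a_n^{-1}\sup_{v\in C_n} X_v \le x-\delta\Bigr) - o(1)
	\;\le\;
	\PP\Bigl(a_n^{-1}\sup_{v\in C_n}\bigl(X_v+Y^1_v+Y^2_v\bigr)\le x\Bigr)
	\;\le\;
	\PP\Bigl(a_n^{-1}\sup_{v\in C_n} X_v \le x+\delta\Bigr) + o(1).
\]
Applying Theorem~\ref{thm:extremelevy} to the outer two probabilities, letting $n\to\infty$ and then $\delta\downarrow 0$, and using continuity of $x\mapsto\exp(-x^{-\alpha}\rho((1,\infty)))$ on $(0,\infty)$, yields the claimed limit. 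Note that the ergodicity hypothesis on $Y^1$ and the stationarity of $Y^1,Y^2$ are not actually needed for this argument beyond stationarity, which is used only to pass from $\sup_{v\in B(1)}$ to $\sup$ over a translate of $B(1)$ in the union bound; the ergodicity is a vestige of the proof technique for Theorem~\ref{thm:extremelevy} itself (where the light-tailed part must be handled via an ergodic theorem rather than discarded), and I would remark on this.

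The main obstacle is the covering estimate: one must show that $C_n$, a union of $p$ convex bodies whose intrinsic volumes satisfy \eqref{eq:boundedintvolumes}, can be covered by at most $c\abs{C_n}$ unit balls for a constant $c$ independent of $n$. For a single convex body $K$ this follows from a Steiner-formula argument — the number of lattice-spaced points within distance $O(1)$ of $K$ is bounded by $\abs{K \oplus B(r)}$ for suitable $r$, and $\abs{K\oplus B(r)} = \sum_{j=0}^d \kappa_{d-j} r^{d-j} V_j(K)$ by Steiner's formula, which under \eqref{eq:boundedintvolumes} is $O(\abs{K}) + O(\abs{K}^{(d-1)/d}) + \dots = O(\abs{C_n})$ once the lower-order terms are absorbed using $\abs{C_n}\to\infty$. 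Summing over the $p$ pieces gives the bound. I expect this geometric estimate to already be available from Section~\ref{sec:geometry}, in which case I would simply cite it; otherwise it is a short self-contained computation. Everything else is the routine Markov/union-bound and sandwich reasoning sketched above.
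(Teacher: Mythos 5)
\textbf{There is a genuine gap: your sandwich argument silently upgrades the moment hypothesis on $Y^1$ and, as a result, your claim that ergodicity is dispensable is wrong.}

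The assumption on $Y^1$ is $\EE\bigl(\sup_{v\in B(1)} Y_v^1\bigr)_+^{\beta_1}<\infty$, which controls only the positive part of the supremum; nothing in the hypotheses controls $\inf_{v\in B(1)}Y^1_v$ or $\sup_{v\in B(1)}\abs{Y^1_v}$. Your argument, however, needs $\{\sup_{v\in C_n}(\abs{Y^1_v}+\abs{Y^2_v})\le\delta a_n\}$ to occur with high probability in order to get the lower half of the sandwich
$\sup_{v\in C_n}(X_v+Y^1_v+Y^2_v)\ge \sup_{v\in C_n}X_v-\delta a_n$. If $Y^1$ has a heavy left tail (which the hypothesis permits), the quantity $\inf_{v\in C_n}Y^1_v$ can be of order much larger than $a_n$ in absolute value, so this lower-bound inequality fails; correspondingly $\PP(\sup_{v\in C_n}(X_v+Y^1_v+Y^2_v)\le a_nx)$ could tend to a limit larger than $\exp(-x^{-\alpha}\rho((1,\infty)))$, and your proof does not rule this out. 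The union-bound/Markov step you carry out is only valid for $Y^2$ (where $\abs{Y^2}$ is integrated) and for $(Y^1)_+$, not for $(Y^1)_-$.

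This is exactly why the paper keeps $Y^1$ inside the machinery of the proof of Theorem~\ref{thm:extremelevy} rather than discarding it by a sandwich. The paper's proof of Theorem~\ref{thm:extremelevyv2} first observes that the arguments of Section~\ref{sec:extreme} go through with $Y_v$ replaced by $Y_v+Y^1_v$ (this is where ergodicity of $Y^1$ and the moment bound \eqref{eq:Y1integrability} enter: Lemma~\ref{thm:partialy} and Lemma~\ref{lem:anticlustering} rely on the ergodic theorem, Theorem~\ref{thm:mixingY}, to average the light-tailed field, and the upper bounds there require only $\beta$-moments of the positive part, while the lower bounds go through by a Fatou-type argument that needs no control on the left tail). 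This produces the extension \eqref{eq:extremeextended} with shifts $\pm d_n$, $d_n=o(a_n)$. Only the field $Y^2$, whose two-sided moment bound \eqref{eq:Y2integrability} is assumed, is then eliminated by a sandwich very much like yours (with $\delta a_n$ replaced by $d_n$ from Lemma~\ref{lem:regvarloworder}). So your argument for $Y^2$, together with the covering bound $\dnp\subseteq\KL$ from Theorem~\ref{thm:maingeometrictheorem}\ref{eq:geomthm4}, is essentially the paper's argument for that piece; but $Y^1$ cannot be absorbed the same way, and the ergodicity hypothesis is doing real work there, not merely echoing the proof technique of Theorem~\ref{thm:extremelevy}.
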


	\section{Results for regularly varying distributions}
	\label{sec:regvar}

In this section we list some useful results concerning regularly varying distributions. 
By assumption,
$\rho$ has a regularly varying right tail, and we thus obtain by Karamata's representation theorem \cite[Theorem~A3.3]{Embrecths1997} that
\begin{equation*}
	\rho((x,\infty)) = 
	c(x) \exp \Bigl( - \int_1^x \frac{\alpha(y)}{y} \dd y \Bigr)
	\qquad\text{for all }
	x\ge 1 ,
\end{equation*}
where $c(x)\to c>0$ and $\alpha(x) \to \alpha$ as $x\to\infty$. Thus, for all positive $\gamma<\alpha$ there exist constants $C>0$ and $x_0\ge 1$ such that
\begin{equation}\label{eq:regvarproductbound}
	\frac{\rho((tx,\infty))}{\rho((x,\infty))}
	\le C t^{-\gamma}
	\qquad \text{for all } t\ge 1 ,\ x\ge x_0 .
\end{equation}

We need bounds similar to \eqref{eq:regvarproductbound} for sums instead of products. The following lemma describes this for arbitrary regularly varying distributions.

\begin{lemma}\label{lem:regvaryingbound}
Let $G\in\RV$ be a regularly varying distribution of index $\alpha > 0$, and let
$\GG$ be its tail. For all $\beta > \alpha$ there are constants $K,C\ge 0$ and $x_0\ge 1$ such that
\begin{equation}\label{eq:regvaryingbound}
	\GG(x-y) \le \GG(x) C \bigl(K + y_+^{\beta}\bigr)
\end{equation}
for all $x\ge x_0$ and $y\in\R$.
\end{lemma}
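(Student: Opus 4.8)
The plan is to split the statement into three regimes for $y$ and handle each separately, exploiting the fact that $\GG\in\RA$ and the Karamata‐type bound \eqref{eq:regvarproductbound} already available in the paper. The overall strategy is to absorb everything into the constant $C(K+y_+^\beta)$, using $K$ to cover the regime where $y$ is bounded (or negative) and the polynomial factor $y_+^\beta$ to cover the regime where $y$ is large.

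First I would treat $y\le 0$: then $x-y\ge x$, so $\GG(x-y)\le\GG(x)$ since $\GG$ is non-increasing, and the bound holds with $C=K=1$ (and $y_+^\beta=0$). Next, the regime of moderate positive $y$, say $0<y\le x/2$: here $x-y\ge x/2$, and I would apply the potter-type bound. Writing $x-y=x(1-y/x)$ with $1-y/x\in[1/2,1)$, uniform convergence of regularly varying functions on compact subsets of $(0,\infty)$ (or directly Karamata's representation as in the derivation of \eqref{eq:regvarproductbound}) gives a constant $C_1$ and threshold $x_0$ with $\GG(x(1-y/x))\le C_1\,\GG(x)$ for $x\ge x_0$, uniformly over $y/x\in[0,1/2]$; here the inequality $\GG(x-y)\le C_1\GG(x)\le C_1\GG(x)(K+y_+^\beta)$ holds trivially once $K\ge 1$. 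Finally, the regime $y> x/2$, equivalently $x< 2y$: I would use monotonicity to write $\GG(x-y)\le\GG(0)\le 1$ when $x\le y$, or more carefully, for $x_0\le x<2y$ we have $\GG(x-y)\le 1$, while $\GG(x)\ge\GG(2y)$. The key point is to lower-bound $\GG(x)$ when $x$ lies in the bounded-ratio-to-$y$ range: using \eqref{eq:regvarproductbound} applied with the roles set up so that $\GG(x)$ is compared to $\GG$ at a point comparable to $y$. Concretely, since $x\ge x_0$ and $2y>x$, pick $t=2y/x\ge 1$ (and, if needed, first ensure $x\ge x_0$ so that \eqref{eq:regvarproductbound} applies); then $\GG(2y)=\GG(tx)\ge$ ... wait, \eqref{eq:regvarproductbound} is an upper bound, so instead I would invoke that for regularly varying $\GG$ and any $\delta<\alpha$ there is also a lower bound $\GG(tx)/\GG(x)\ge c\,t^{-\beta}$ for $t\ge1$, $x\ge x_0$, with $\beta>\alpha$ (this again follows from Karamata's representation, choosing $\beta$ so that $\alpha(y)\le\beta$ eventually). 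Hence $\GG(x)\ge\GG(tx)\cdot(c\,t^{-\beta})^{-1}$ is the wrong direction too; the correct move is $\GG(x)\ge c\,(x/x_0)^{-\beta}\GG(x_0)$-type bounds. Let me restate: the clean route is to bound $\GG(x-y)\le 1=\GG(x)/\GG(x)\le \GG(x)\cdot C(y/x)^{\beta}$, which requires $1/\GG(x)\le C(y/x)^\beta$, i.e. $\GG(x)\ge C^{-1}(x/y)^\beta$. Since on this regime $x/y<2$, it suffices to have $\GG(x)\ge c'\,x^{-\beta}$ for $x\ge x_0$, which is exactly the lower Potter bound for regularly varying tails (valid for any $\beta>\alpha$). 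Then $\GG(x-y)\le 1\le (c')^{-1}x^{-\beta}\cdot(x^\beta/c')^{-1}$... I will phrase this cleanly in the write-up as: $\GG(x-y)\le 1\le c'^{-1}x^\beta\GG(x)x^{-\beta}\le c'^{-1}(2y)^\beta\GG(x)x^{-\beta}\le 2^\beta c'^{-1}\GG(x)y^\beta$, using $x<2y$ in the numerator and $\GG(x)\ge c'x^{-\beta}$.

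Combining the three cases, take $x_0$ to be the maximum of the thresholds appearing above, $C$ the maximum of the constants $C_1$, $2^\beta c'^{-1}$, $1$, and $K=1$; then \eqref{eq:regvaryingbound} holds for all $x\ge x_0$ and all $y\in\R$. The main obstacle is the large-$y$ regime: one must produce a polynomial \emph{lower} bound $\GG(x)\gtrsim x^{-\beta}$ for $x\ge x_0$, which is the reason the exponent $\beta$ must be allowed to exceed $\alpha$ rather than equal it; this is precisely the lower Potter bound, obtainable from Karamata's representation $\GG(x)=c(x)\exp(-\int_1^x\alpha(y)/y\,\dd y)$ exactly as \eqref{eq:regvarproductbound} was, by choosing $x_0$ large enough that $\alpha(y)\le\beta$ for $y\ge x_0$ and handling the integral over $[1,x_0]$ via the (strictly positive, bounded) slowly varying factor. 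Everything else is monotonicity of $\GG$ and bookkeeping of constants.
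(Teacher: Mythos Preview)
Your proposal is correct and follows essentially the same route as the paper: both arguments rest on Potter-type bounds derived from Karamata's representation $\GG(x)=c(x)\exp(-\int_1^x\alpha(z)/z\,\dd z)$, using that eventually $\alpha(z)<\beta$. The only cosmetic difference is the choice of split point: the paper cuts the range $y\ge 0$ at $y=x-x_0$ (so that either $x-y\ge x_0$ and the representation applies to both $\GG(x-y)$ and $\GG(x)$, or $\GG(x-y)\le 1$ and one bounds $1/\GG(x)$ directly), obtaining the single bound $\GG(x-y)/\GG(x)\le C'(1+y/x_0)^\beta$ in one stroke; you instead cut at $y=x/2$ and invoke an upper Potter bound on $[1/2,1]$ for the moderate-$y$ regime and a lower Potter bound $\GG(x)\ge c'x^{-\beta}$ for the large-$y$ regime. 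Your chain of inequalities in the large-$y$ case has a stray $x^{-\beta}$ that should be dropped (the clean version is $\GG(x-y)\le 1\le (c')^{-1}x^\beta\GG(x)\le (c')^{-1}(2y)^\beta\GG(x)$), but the idea is right and you already flagged that this needs tidying.
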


\begin{proof}
We show that there are constants $C'$ and $K'$ such that
\begin{equation}\label{eq:regvaryingbound0}
	\GG(x-y) \le \GG(x) C' \bigl(K' + y_+\bigr)^{\beta}
\end{equation}
is satisfied for all $x\ge x_0$ and $y\in\R$. From this, \eqref{eq:regvaryingbound} easily follows.

Since $G\in\RV$ and hence $\GG \in \RA$ it follows by Karamata's representation theorem for regularly varying functions that
\begin{equation*}
	\GG(x) = a(x) \exp \Bigl( -\int_1^x \frac{\alpha(z)}{z} \dd z\Bigr)
	\qquad
	\text{for all } x\ge 1 ,
\end{equation*}
for some functions $a(x)\to a>0$ and $\alpha(x)\to \alpha$ as $x\to\infty$.
Fix $\beta >\alpha$ and let $ x_0\ge 1$ be given such that
\begin{equation}\label{eq:regvardistinequality}
	\alpha(x) < \beta ,
	\qquad\text{and}\qquad
	\abs{a(x)-a} \le a/3 
\end{equation}
for all $x\ge  x_0$.

Now consider only $x\ge x_0$ and $y\ge 0$. If $y \le x- x_0$ and hence $x-y \ge  x_0$, we find from 
\eqref{eq:regvardistinequality} above that
\begin{align*}
	\frac{\GG(x-y)}{\GG(x)} &
	= \frac{a(x-y)}{a(x)} \exp\Bigl( \int_{x-y}^x \frac{\alpha(z)}{z} \dd z \Bigr) \\&
	\le 2 \Bigl(\frac{x}{x-y}\Bigr)^{\beta} \\&
	\le 2 \Bigl(1+\frac{y}{ x_0}\Bigr)^\beta.
\end{align*}
If on the other hand $y > x- x_0$, and thus $x-y <  x_0$, we see that
\begin{align*}
	\frac{\GG(x-y)}{\GG(x)} 
	\le \frac{1}{\GG(x)} &
	= \Bigl[ a(x)\exp\Bigl(- \int_{1}^{ x_0} \frac{\alpha(z)}{z} \dd z \Bigr)\Bigr]^{-1} \exp\Bigl( \int_{ x_0}^x \frac{\alpha(z)}{z} \dd z \Bigr) \\&
	\le C_0 \Bigl( \frac{x}{x_0} \Bigr)^{\beta} \\ &
	\le C_0 \Bigl( 1 + \frac{y}{ x_0} \Bigr)^{\beta}
\end{align*}
where the constant $C_0$ can be chosen as
\begin{equation*}
	C_0=\Bigl[ \frac{2a}{3}\exp\Bigl(- \int_{1}^{x_0} \frac{\alpha(z)}{z} \dd z \Bigr)\Bigr]^{-1}.
\end{equation*}
Choosing $K'=x_0$ and $ C'=(K')^{-\beta}\cdot\max\{2, C_0\}$ shows \eqref{eq:regvaryingbound0} for $y\ge 0$.

For $y<0$ the claim \eqref{eq:regvaryingbound0} reads $\GG(x-y) \le \GG(x) C' (K')^{\beta}$, which is clearly true since $x \mapsto \GG(x)$ is decreasing and $C'(K')^{\beta } \ge 2$.
\end{proof}

In the proof of Theorem~\ref{thm:extremelevyv2} we need to find a sequence $d_n$, which tends to infinity at a slower rate than the norming constants $a_n$ of $\rho$, i.e. the sequence satisfying \eqref{eq:normingconstants}.

\begin{lemma}\label{lem:regvarloworder}
Let $\beta > \alpha$ where $\alpha$ is the regularly varying index of $\rho$. There is a sequence $d_n \to \infty$ of order $d_n=o(a_n)$ such that
\[
	\lim_{n\to \infty} 
	\frac{\overline H(d_n)}{\rho((a_n,\infty))} = 0
\]
for all distributions $H$ on $[0,\infty)$ satisfying $\int x^\beta H(\dd x) < \infty$.
\end{lemma}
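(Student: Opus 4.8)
The goal is to find a sequence $d_n \to \infty$ with $d_n = o(a_n)$ such that $\overline H(d_n)/\rho((a_n,\infty)) \to 0$ uniformly over all distributions $H$ on $[0,\infty)$ with a finite $\beta$-th moment. The key observation is that by Markov's inequality, $\overline H(d_n) = H((d_n,\infty)) \le d_n^{-\beta} \int x^\beta H(\dd x)$. The trouble is that the constant $\int x^\beta H(\dd x)$ is not uniform over all such $H$, so a naive bound does not suffice. The way around this is to notice that the statement should really be read as: for each fixed $H$ with $\int x^\beta H(\dd x) < \infty$, the limit holds — so we get to use the finiteness of the moment for that fixed $H$, but we must choose one sequence $d_n$ that works simultaneously for all of them. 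This is possible precisely because we have a fixed power $\beta$ in hand.

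First I would record the basic estimate $\overline H(d_n) \le C_H d_n^{-\beta}$ where $C_H = \int x^\beta H(\dd x)$ is a finite constant depending only on $H$. Then the ratio is bounded by $C_H d_n^{-\beta}/\rho((a_n,\infty))$, so it suffices to choose $d_n$ so that $d_n^{-\beta} = o(\rho((a_n,\infty)))$, equivalently $d_n^{\beta}\rho((a_n,\infty)) \to \infty$, while still ensuring $d_n = o(a_n)$. Next I would invoke the defining relation \eqref{eq:normingconstants}: $\abs{C_n}\rho((a_n,\infty)) \to \rho((1,\infty)) \in (0,\infty)$, so $\rho((a_n,\infty))$ is of order $\abs{C_n}^{-1}$. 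Hence it suffices to have $d_n = o(a_n)$ together with $d_n^{\beta}/\abs{C_n} \to \infty$, i.e. $d_n \gg \abs{C_n}^{1/\beta}$.

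To construct such a $d_n$, I would set $d_n = \abs{C_n}^{\eta}$ for a suitable exponent $\eta$. Since $\rho$ is regularly varying of index $\alpha$, the norming constants satisfy $a_n \in \RV_{1/\alpha}$ as a function of $\abs{C_n}$; more precisely, from \eqref{eq:normingconstants} one gets $a_n = \abs{C_n}^{1/\alpha} \ell(\abs{C_n})$ for a slowly varying $\ell$. Thus $\abs{C_n}^{\eta} = o(a_n)$ holds for any $\eta < 1/\alpha$, and $\abs{C_n}^{\eta}/\abs{C_n}^{1/\beta} \to \infty$ holds for any $\eta > 1/\beta$. Since $\beta > \alpha$ gives $1/\beta < 1/\alpha$, we can pick any $\eta \in (1/\beta, 1/\alpha)$, e.g. $\eta = \tfrac12(1/\alpha + 1/\beta)$, and set $d_n = \abs{C_n}^{\eta}$. (If $\abs{C_n}^{\eta}$ is not integer-valued or one prefers an increasing sequence, replace it by $\lfloor \abs{C_n}^{\eta}\rfloor$; this changes nothing asymptotically. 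Monotonicity of $d_n$ in $n$ can be arranged since $\abs{C_n}\to\infty$, passing if necessary to $\max_{m\le n}\abs{C_m}^{\eta}$, which still has the same order.)

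Then for any fixed $H$ with $\int x^\beta H(\dd x) = C_H < \infty$ we conclude
\[
	\frac{\overline H(d_n)}{\rho((a_n,\infty))}
	\le \frac{C_H\, d_n^{-\beta}}{\rho((a_n,\infty))}
	= C_H\, \abs{C_n}^{-\eta\beta}\, \frac{1}{\abs{C_n}\rho((a_n,\infty))}\, \abs{C_n}
	= C_H\, \abs{C_n}^{1-\eta\beta}\, \bigl(\abs{C_n}\rho((a_n,\infty))\bigr)^{-1},
\]
where the last factor converges to $\rho((1,\infty))^{-1}$ by \eqref{eq:normingconstants}, and $1 - \eta\beta < 0$ because $\eta > 1/\beta$, so $\abs{C_n}^{1-\eta\beta}\to 0$; hence the whole expression tends to $0$. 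The main (and only real) obstacle is the uniformity issue flagged above: making sure the same $d_n$ works for \emph{every} admissible $H$, which is resolved by the Markov bound isolating the $H$-dependence into a single multiplicative constant and then choosing $d_n$ to beat $\rho((a_n,\infty))$ with room to spare via the gap $\alpha < \beta$.
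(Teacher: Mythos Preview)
Your proof is correct and follows essentially the same approach as the paper: bound $\overline H(d_n)$ via the $\beta$-th moment, then exploit the gap $\beta>\alpha$ to choose $d_n$ growing strictly between $\abs{C_n}^{1/\beta}$ and $a_n$. The paper parametrizes this as $d_n=a_n^{1-(\beta-\alpha)/(2\beta)}$, while you write $d_n=\abs{C_n}^{\eta}$ with $\eta=\tfrac12(1/\alpha+1/\beta)$; since $a_n=\abs{C_n}^{1/\alpha}\ell(\abs{C_n})$ with $\ell$ slowly varying, these two choices coincide up to a slowly varying factor and are in fact the same exponent.
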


\begin{proof}
Since $\rho$ is regularly varying of index $\alpha>0$, there is a slowly varying function $L$ such that $\rho((x,\infty))=x^{-\alpha}L(x)$. Let $\epsilon = \beta - \alpha>0$ and define $d_n = a_n^{1-\epsilon/(2\beta)}$. Then $d_n \to \infty$ and $d_n=o(a_n)$ as $n\to\infty$. Noticing that $a_n^{\epsilon/2} L(a_n)\to \infty$ and $d_n^\beta \overline H(d_n) \to 0$ concludes the proof.
\end{proof}

	\section{Boundedness and tail behavior}
	\label{sec:tail}

In this and the following sections we will need a decomposition of $(X_v)$ into independent fields. For this we write $\Lambda=\Lambda_1+\Lambda_2$ as the independent sum of two L\'evy bases with L\'evy-Khintchine representations
\begin{align*}
	C(\lambda \dagger \Lambda_1(A))
	&=	\int_{A \times \R} \bigl( \e^{i\lambda x} - 1\bigr) F_{(1,\infty)}(\du, \dd x)\\
		C(\lambda \dagger \Lambda_2(A))
	&= i \lambda a \abs{A}
	- \frac{1}{2} \lambda^2 \theta \abs{A} + 
	\int_{A \times \R} \bigl( \e^{i\lambda x} - 1 - i\lambda x \II{[-1,1]}(x) \bigr) F_{(-\infty,1]}(\du, \dd x)\,,
\end{align*}
respectively. 
Here, $F_D=m\otimes \rho_D$, where $\rho_D$ is the restriction of $\rho$ to $D\subseteq \R$. Similarly, we decompose the field $(X_v)_v$ into a sum of two independent random fields $X_v = Z_v + Y_v$
for all $\vR$, where
\begin{align}
	\label{eq:Zdefinition}
	& 
	Z_v = \int_\Rd \ff \Lambda_1(\dd u) \qquad\text{and}\\ &
	\label{eq:Ydefinition}
	Y_v = \int_\Rd \ff \Lambda_2(\dd u).
\end{align}
In this section we will make use of a further decomposition of $\Lambda_2$ into $\Lambda_2=\Lambda_{2,C}+\Lambda_{2,-}$, where
\begin{align*}
	C(\lambda \dagger \Lambda_{2,-}(A))
	&=	\int_{A \times \R} \bigl( \e^{i\lambda x} - 1\bigr) F_{(-\infty,-1)}(\du, \dd x),\\
		C(\lambda \dagger \Lambda_{2,C}(A))
	&= i \lambda a \abs{A}  - \frac{1}{2} \lambda^2 \theta \abs{A}+
	\int_{A \times \R} \bigl( \e^{i\lambda x} - 1 - i\lambda x \II{[-1,1]}(x) \bigr) F_{[-1,1]}(\du, \dd x).
\end{align*}
We let $Y_v=Y_v^C+Y_v^-$ be the corresponding independent decomposition of $(Y_v)_v$.

It will be essential in the subsequent proofs that the sample paths of $(X_v)_{v\in B}$ are bounded, when $B$ is a fixed bounded set. For this we will study the different parts of the decomposition separately.

\begin{lemma}\label{lem:fieldcontinuity}
Under Assumption~\ref{ass:fullassumption1} the field $(Y_v^C)_{v\in B}$ has a continuous version. In particular, it holds that $\PP(\sup_{v\in B}\abs{Y_v^C}<\infty)=1$. 
\end{lemma}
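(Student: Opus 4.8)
The plan is to establish continuity of $(Y_v^C)_{v\in B}$ by a Kolmogorov-type criterion, for which one needs a moment bound on increments $Y_{v_1}^C-Y_{v_2}^C$. The field $Y_v^C=\int_\Rd \ff\,\Lambda_{2,C}(\dd u)$ is driven by the L\'evy basis $\Lambda_{2,C}$, whose L\'evy measure $\rho_{[-1,1]}$ is compactly supported; hence $\Lambda_{2,C}(A)$ has moments of all orders, with $\EE\abs{\Lambda_{2,C}(A)}^m \le c_m(\abs{A}+\abs{A}^m)$ for each even integer $m$ (standard cumulant computation: the $k$-th cumulant of $\Lambda_{2,C}(A)$ is a constant times $\abs{A}$ plus, for $k=2$, the Gaussian part $\theta\abs{A}$). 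First I would fix an even integer $m$ with $m>d/\zeta$, where $\zeta$ is the H\"older index of $f$ from Assumption~\ref{ass:fullassumption1}. Writing the increment as a stochastic integral, $Y_{v_1}^C-Y_{v_2}^C=\int_\Rd \bigl(f(v_1-u)-f(v_2-u)\bigr)\Lambda_{2,C}(\dd u)$, I would bound its $m$-th moment using the cumulant representation: $\log\EE\e^{i\lambda(Y_{v_1}^C-Y_{v_2}^C)}$ is an integral over $u$ of the cumulant of $(f(v_1-u)-f(v_2-u))\Lambda_{2,C}(\dd u)$, and differentiating $m$ times at $\lambda=0$ expresses $\EE(Y_{v_1}^C-Y_{v_2}^C)^m$ as a polynomial in the quantities $J_k:=\int_\Rd \abs{f(v_1-u)-f(v_2-u)}^k\,\dd u$ for $k=2,\dots,m$ (the Gaussian part only contributes to $k=2$).

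The key estimate is then that each $J_k\le C\abs{v_1-v_2}^\zeta$ for $v_1,v_2$ ranging over the bounded set $B$. This follows by splitting: for $u$ outside a fixed large ball the integrand is $0$ or controlled by the integrable decreasing envelope $g$ through $\abs{f(v_i-u)}^k$, restricted to a region whose contribution one bounds by integrability of $g^\gamma$ together with $f\le 1$; for $u$ in the large ball, $\abs{f(v_1-u)-f(v_2-u)}^k\le (C\abs{v_1-v_2}^\zeta)^k$ by H\"older continuity, integrated over a bounded region. More carefully, using $\abs{f(v_1-u)-f(v_2-u)}\le \min\{2g(\abs{v_i-u})^{?},\,C\abs{v_1-v_2}^\zeta\}$ and interpolating, one gets $J_k\le C_k\abs{v_1-v_2}^{\zeta}$ (one power of $\abs{v_1-v_2}^\zeta$ suffices, since the remaining factors are bounded using boundedness of $f$ and integrability of the envelope). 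Consequently $\EE(Y_{v_1}^C-Y_{v_2}^C)^m\le C\,(\abs{v_1-v_2}^\zeta + \cdots + \abs{v_1-v_2}^{\zeta})\le C'\abs{v_1-v_2}^{\zeta}$ for $v_1,v_2\in B$ with $\abs{v_1-v_2}\le 1$, say.

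Since we may assume $\abs{v_1-v_2}\le 1$ and the separable version was already fixed, the Kolmogorov--Chentsov continuity theorem now applies: $\EE\abs{Y_{v_1}^C-Y_{v_2}^C}^m\le C'\abs{v_1-v_2}^{d+(m\zeta-d)}$ with $m\zeta-d>0$, so $(Y_v^C)_{v\in B}$ admits a continuous modification, which by separability we take as our chosen version. Continuity on the bounded (hence relatively compact, after passing to the closure) set $B$ forces $\sup_{v\in B}\abs{Y_v^C}<\infty$ almost surely. I expect the main obstacle to be the bookkeeping in expressing $\EE(Y_{v_1}^C-Y_{v_2}^C)^m$ via cumulants and verifying that every term is $O(\abs{v_1-v_2}^{\zeta})$ uniformly over $B$ — in particular handling the tail-in-$u$ region where H\"older continuity is useless and one must instead exploit the decreasing integrable envelope $g$ together with $f(0)=1$, $f\le 1$; the choice of $m>d/\zeta$ and the reduction to $\abs{v_1-v_2}\le 1$ are routine once that estimate is in hand.
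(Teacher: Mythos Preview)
Your overall strategy---bound the $m$-th moment of the increment via the cumulant expansion and then invoke a Kolmogorov-type continuity criterion---is exactly the paper's approach. However, there is a genuine gap in your exponent bookkeeping that makes the argument as written fail.

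You derive $J_k\le C_k\abs{v_1-v_2}^{\zeta}$ (one power of $\abs{v_1-v_2}^{\zeta}$) and conclude $\EE(Y_{v_1}^C-Y_{v_2}^C)^m\le C'\abs{v_1-v_2}^{\zeta}$, yet in the final step you invoke Kolmogorov--Chentsov with exponent $m\zeta$. These are inconsistent: the $m$-th moment is a polynomial in the cumulants $\kappa_2,\ldots,\kappa_m$, and the single-block term $\kappa_m$ is itself of order $J_m$, so your estimate gives only $\abs{v_1-v_2}^{\zeta}$. Kolmogorov requires the exponent to exceed $d$; since a H\"older index satisfies $\zeta\le 1$, this fails for every $d\ge 1$. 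One power does \emph{not} suffice.

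The repair is straightforward. Use the H\"older bound on $k-1$ of the factors and the integrable envelope on the remaining one:
\[
J_k=\int_{\Rd}\abs{f(v_1-u)-f(v_2-u)}^{k}\,\dd u
\le \bigl(C\abs{v_1-v_2}^{\zeta}\bigr)^{k-1}\int_{\Rd}\bigl(g(\abs{v_1-u})+g(\abs{v_2-u})\bigr)\,\dd u
\le C_k\abs{v_1-v_2}^{(k-1)\zeta},
\]
noting that $g\le 1$ (without loss of generality) together with $\int g^{\gamma}<\infty$ and $\gamma\le 1$ gives $\int g<\infty$. Observe also that $\kappa_1=0$ by translation invariance of Lebesgue measure, so every partition in the moment--cumulant formula has block sizes $\ge 2$; a partition with $j$ blocks therefore contributes $\abs{v_1-v_2}^{(m-j)\zeta}$ with $j\le m/2$, and hence
\[
\EE\bigl(Y_{v_1}^C-Y_{v_2}^C\bigr)^m\le C_m'\,\abs{v_1-v_2}^{m\zeta/2}.
\]
This is precisely the bound the paper obtains (its $n'=n/2$ for even $n$, by reference to \cite[Theorem~5.1]{Stehr2020b}). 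The paper then applies Adler's criterion \cite[Theorem~3.2.5]{Adler1981} with $n>4d/\zeta$; standard Kolmogorov--Chentsov with $m>2d/\zeta$ would work equally well.
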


\begin{proof}
Let $\Lambda_{2,C}'$ be the so-called spot variable of $\Lambda_{2,C}$, i.e. a random variable equivalent in distribution to $\Lambda_{2,C}(S)$ for sets $S\subseteq \Rd$ with Lebesgue measure $1$. Then all moments of $\Lambda_{2,C}'$ are finite, and consequently all moments of $Y^C_v$ are finite. Following the lines of the proof of \cite[Theorem~5.1]{Stehr2020b} with a Lipschitz assumption replaced by the H\"older assumption in Assumption~\ref{ass:fullassumption1}, it is immediately seen that there exist finite constants $C'_n$, chosen independently of $r$ and $v\in B$, and natural numbers $n' \ge n/2$, such that
\[
	\EE [( Y_{v+r}-Y_v)^n ] \le C'_n \abs{r}^{\zeta {n'}}
\]
with the equality $n'=n/2$ whenever $n$ is even. Using the fact that $\abs{r}\le\mathrm{diam}(B)$, we find finite $C'\ge 0$ 
and $\eta > n>4d/\zeta$ such that
\begin{equation*}
	\EE \abs*{Y^C_{v+r}-Y^C_v}^{n}  
	\le C'_{n} \abs{r}^{2d}\abs{r}^{\zeta n/2 - 2d}
	\le \frac{C' \abs{r}^{2d}}{ \abs[\big]{\log\abs{r}}^{1+\eta}}
\end{equation*}
for all $v\in B$. From a corollary to \cite[Theorem~3.2.5]{Adler1981} we conclude that $(Y_v)_{v\in B}$ has a continuous version on $B$.
\end{proof}

It remains to show that Assumption~\ref{ass:fullassumption1} implies that $(Z_v)_{v\in B}$ and $(Y^-_v)_{v\in B}$ are both bounded. For this, the representation in Theorem~\ref{thm:compoundpoissonrepresentation} below will be useful. This theorem is an adapted version of \cite[Theorem~3.4.3]{Samorodnitsky2016} and in the formulation we apply a similar (when possible) notation. 
We say that $(\Gamma_n)$ is a sequence of standard Poisson arrivals, if
\[
\Gamma_n=e_1+\dots + e_n,
\]
where $(e_n)$ are independent and all follow an exponential distribution with mean 1.

The theorem is formulated in terms of $(X_v)_{v\in B}$ as defined under the minimal Assumption~\ref{ass:minimalassumption}.
It will thus be applicable to $(Z_v)_{v\in B}$ and $(Y_v^-)_{v\in B}$ defined under Assumptions~\ref{ass:fullassumption1} and \ref{ass:fullassumption2}.

\begin{theorem}\label{thm:compoundpoissonrepresentation}
Assume, in addition to Assumption~\ref{ass:minimalassumption}, that the support of the L\'evy measure $\rho$ is a subset of $(0,\infty)$, that $\int_0^1 x \rho(\dd x)<\infty$, and that $\Lambda(A)$ has L\'evy-Khintchine representation
\[
C(\lambda \dagger \Lambda(A))=\int_{A \times \R} \bigl( \e^{i\lambda x} - 1\bigr) F(\du, \dd x) 
\] 
with $F=m\otimes\rho$. Let $\mu$ be a probability measure on $\R^d$ that is equivalent with Lebesgue measure $m$. Let
\[
r(v)=\frac{\dd m}{\dd\mu}(v)
\]
for $v\in\R^d$ and define
\[
\rho_r(v,\cdot)=r(v)\rho(\cdot)
\] 
for each $v\in \R^d$.  Furthermore, define the generalized inverse of $x\mapsto \rho_r(v,(x,\infty))$ as
\[
G(x,v)=\inf\{y>0\::\: \rho_r(v,(y,\infty))\leq x\}.
\]
Let $(U_n)$ be a sequence of independent and identically distributed random variables with common distribution $\mu$ and let $(\Gamma_n)$ be a sequence of standard Poisson arrivals on $(0,\infty)$ independent of $(U_n)$. Then $(\tilde{X}_v)_{v\in B}$ defined by
\begin{equation}\label{eq:ztildedef}
\tilde{X}_v=\sum_{n=1}^\infty G(\Gamma_n,U_n)f(v-U_n)
\end{equation}
for all $v\in B$ is a well-defined random field, which is equal in its finite-dimensional distributions to $(X_v)_{v\in B}$
\end{theorem}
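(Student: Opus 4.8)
The goal is to show that the series \eqref{eq:ztildedef} converges and defines a random field whose finite-dimensional distributions coincide with those of $(X_v)_{v\in B}$ from \eqref{eq:definitionX}. The strategy is to recognize $(\tilde X_v)_{v\in B}$ as a series (LePage-type) representation of an integral against a Poisson random measure, and then to match characteristic functions with the L\'evy--Khintchine form assumed for $\Lambda(A)$. First I would record that, since $\mu$ is equivalent to $m$ with density $1/r$, the map $(x,v)\mapsto G(x,v)$ is by construction the generalized inverse of $x\mapsto \rho_r(v,(x,\infty))$, so that for each fixed $v$ the image of Lebesgue measure on $(0,\infty)$ under $x\mapsto G(x,v)$ is exactly $\rho_r(v,\cdot)=r(v)\rho(\cdot)$. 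Consequently, by the standard marking/transformation theorem for Poisson processes, $\sum_{n\ge 1}\delta_{(G(\Gamma_n,U_n),\,U_n)}$ is a Poisson random measure on $(0,\infty)\times\R^d$ with intensity $\rho(\dd x)\otimes\mu(\dd v)$, and hence $\sum_{n\ge 1}\delta_{(G(\Gamma_n,U_n)f(v-U_n),\,U_n)}$ has the appropriate pushforward intensity; this is the core identity from which everything follows, and it is essentially \cite[Theorem~3.4.3]{Samorodnitsky2016}.

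Next I would address convergence of the series \eqref{eq:ztildedef} for each fixed $v\in B$ (and joint convergence for finitely many $v$'s). The relevant quantity to control is $\sum_n \EE\!\bigl(G(\Gamma_n,U_n)f(v-U_n)\wedge 1\bigr)$ together with a truncated-variance term; by the transformation above this equals an integral of $\min(xf(v-u),1)$-type quantities against $m(\dd u)\otimes\rho(\dd x)$, which is finite precisely because of the hypotheses $\int_0^1 x\,\rho(\dd x)<\infty$, the regular variation (hence $\int_1^\infty x^\gamma\rho(\dd x)<\infty$), and $\int_{\Rd} g^\gamma(\abs u)\,\dd u<\infty$ with $f\le g$. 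This is the same computation underlying well-definedness of \eqref{eq:definitionX} via \cite[Theorem~2.7]{Rajput1989}, so I would invoke that and the Three-Series/Kolmogorov criterion for the a.s.\ convergence of the sum of independent terms (after centering is seen to be unnecessary here since $\rho$ is supported on $(0,\infty)$ and $\int_0^1 x\rho<\infty$, making the field of finite variation).

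With convergence in hand, I would compute the joint characteristic function: for $v_1,\dots,v_m\in B$ and $\lambda_1,\dots,\lambda_m\in\R$, conditioning on $(U_n)$ and using the Poisson structure of $(\Gamma_n)$ gives
\[
\log\EE\exp\Bigl(i\sum_{j=1}^m\lambda_j\tilde X_{v_j}\Bigr)
=\int_{\Rd}\int_0^\infty\Bigl(\e^{i\sum_j\lambda_j x f(v_j-u)}-1\Bigr)\,\rho(\dd x)\,m(\dd u),
\]
where the factor $r(v)$ from $\rho_r$ is absorbed by $\dd\mu=\tfrac1r\,\dd m$, turning $\mu(\dd u)\,\rho_r(u,\dd x)$ back into $m(\dd u)\,\rho(\dd x)$. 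This is exactly the L\'evy--Khintchine exponent of $\bigl(X_{v_j}\bigr)_j=\bigl(\int f(v_j-u)\Lambda(\dd u)\bigr)_j$ under the stated representation $C(\lambda\dagger\Lambda(A))=\int_{A\times\R}(\e^{i\lambda x}-1)F(\du,\dd x)$, so the finite-dimensional distributions agree.

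\textbf{Main obstacle.} The delicate point is not the characteristic-function bookkeeping but justifying the interchanges and convergence rigorously: showing that $\sum_n G(\Gamma_n,U_n)f(v-U_n)$ converges almost surely and in an appropriate sense jointly over $v\in B$, and that Fubini/dominated convergence applies when passing to the conditional characteristic function, all under only the $\gamma$-moment hypotheses rather than a first-moment one at infinity. Here the right tail of $\rho$ is regularly varying (hence $\int_1^\infty x^\gamma\rho(\dd x)<\infty$ for $\gamma<\alpha$) and $\int g^\gamma<\infty$, which together with $\int_0^1 x\rho(\dd x)<\infty$ is exactly what \cite[Theorem~3.4.3]{Samorodnitsky2016} needs; so the cleanest route is to phrase the argument as a direct specialization of that theorem, verifying its hypotheses one by one (support in $(0,\infty)$, the small-jump integrability, the change of measure $\mu\sim m$, the definition of $G$ as generalized inverse), and then transcribe its conclusion, which is precisely the asserted equality of finite-dimensional distributions.
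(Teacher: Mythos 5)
Your proposal is correct and follows essentially the same route as the paper, which simply defers to \cite[Theorem~3.4.3]{Samorodnitsky2016} after verifying that the two truncated integrals $\int_{\Rd}\int_\R [\![f(v-u)x]\!]\,\rho(\dd x)\dd u$ and $\int_{\Rd}\int_\R [\![f(v-u)x]\!]^2\,\rho(\dd x)\dd u$ are finite under the small-jump condition $\int_0^1 x\,\rho(\dd x)<\infty$ together with the $\gamma$-moment bounds on $\rho$ and $g$ — exactly the integrability check you identify. One small slip worth noting: the intensity of $\sum_n\delta_{(G(\Gamma_n,U_n),U_n)}$ is $\rho(\dd x)\otimes m(\dd u)$ (not $\rho\otimes\mu$), since the factor $r(u)$ from $\rho_r$ cancels against $\dd\mu=r^{-1}\dd m$; you correct this in the characteristic-function computation, so the argument as a whole is sound.
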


\begin{proof}
The proof follows the lines of the proof of \cite[Theorem~3.4.3]{Samorodnitsky2016} closely, utilizing the fact that both $G$ and $f$ are non-negative and, with the notation $[\![\cdot]\!]$ from \cite{Samorodnitsky2016}, that both 
\[
\int_{\Rd}\int_\R [\![f(v-u)x]\!]\,\rho(\dd x) \dd u \quad\text{and}\quad \int_{\Rd}\int_\R [\![f(v-u)x]\!]^2\,\rho(\dd x) \dd u
\]
are bounded from above by
\[
\int_{\Rd}\int_{0<x\leq 1} f(v-u) x \,\rho(\dd x) \dd u+2\int_{\Rd}\int_{x>1} f(v-u)^\gamma x^\gamma \,\rho(\dd x) \dd u ,
\]
which is finite by assumption.
\end{proof}

\begin{corollary}\label{cor:ztilde}
The field $(\tilde{X}_v)_{v\in B}$ defined in \eqref{eq:ztildedef} is bounded on a set with probability 1. If furthermore, the kernel function $f$ is continuous, then   the field is continuous with probability~1.
\end{corollary}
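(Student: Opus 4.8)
The plan is to exploit the explicit series representation $\tilde{X}_v=\sum_{n=1}^\infty G(\Gamma_n,U_n)f(v-U_n)$ from Theorem~\ref{thm:compoundpoissonrepresentation} together with the uniform envelope $f(\cdot)\le g(\abs{\cdot})$ from Assumption~\ref{ass:minimalassumption}. First I would fix the bounded set $B$ and a ball $B(R)\supseteq B$, and note that $\sup_{v\in B}\tilde{X}_v\le \sum_{n=1}^\infty G(\Gamma_n,U_n)\,\sup_{v\in B}f(v-U_n)\le \sum_{n=1}^\infty G(\Gamma_n,U_n)\,h(U_n)$, where $h(u)=\sup_{v\in B}g(\abs{v-u})$ is a deterministic function that is bounded (by $g(0)$) and, since $g$ is decreasing with $\int_\Rd g^\gamma(\abs{u})\,\du<\infty$, also satisfies $\int_\Rd h^\gamma(u)\,\du<\infty$ (this integrability was already noted in the discussion following Assumption~\ref{ass:minimalassumption}). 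So it suffices to show $\sum_n G(\Gamma_n,U_n)h(U_n)<\infty$ a.s.

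For the convergence of this nonnegative series I would go back to the construction underlying Theorem~\ref{thm:compoundpoissonrepresentation}: the points $\{(G(\Gamma_n,U_n),U_n)\}$ form a Poisson process on $(0,\infty)\times\Rd$ with mean measure $\rho\otimes m$ (this is exactly what makes the representation work, cf.\ the proof of \cite[Theorem~3.4.3]{Samorodnitsky2016}). Hence $\sum_n G(\Gamma_n,U_n)h(U_n)$ is a Poisson integral $\int_{(0,\infty)\times\Rd} x\,h(u)\,N(\dd x,\dd u)$, and by Campbell's theorem it is finite a.s.\ provided $\int_{(0,\infty)\times\Rd}\min\{x\,h(u),1\}\,\rho(\dd x)\,\du<\infty$. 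Splitting at $x=1$, the part $x\le 1$ is bounded by $\int h(u)\,\du\int_0^1 x\,\rho(\dd x)<\infty$ using $\int_0^1 x\,\rho(\dd x)<\infty$ and $h$ integrable (it is bounded and $\gamma$-integrable with $\gamma\le1$, hence integrable), while the part $x>1$ is bounded by $\int_\Rd\int_{x>1}(x h(u))^\gamma\,\rho(\dd x)\,\du=\int_\Rd h^\gamma(u)\,\du\int_{x>1}x^\gamma\,\rho(\dd x)<\infty$ by the two $\gamma$-moment conditions in Assumption~\ref{ass:minimalassumption}, using $\min\{t,1\}\le t^\gamma$ for $t\ge0$. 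This gives $\PP(\sup_{v\in B}\abs{\tilde{X}_v}<\infty)=1$; since $B$ was an arbitrary bounded set, the field is a.s.\ locally bounded, hence bounded on bounded sets with probability $1$.

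For the second assertion, assume $f$ is continuous. Then each term $v\mapsto G(\Gamma_n,U_n)f(v-U_n)$ is continuous, so it suffices to show the series converges uniformly on $B(R)$ a.s.; local uniform convergence of a series of continuous functions yields a continuous sum. Writing $S_m(v)=\sum_{n>m}G(\Gamma_n,U_n)f(v-U_n)$, we have the deterministic bound $\sup_{v\in B(R)}S_m(v)\le \sum_{n>m}G(\Gamma_n,U_n)h(u)$ with $h$ as above, and by the first part this tail of the convergent nonnegative series tends to $0$ a.s.\ as $m\to\infty$. Hence the partial sums converge uniformly on $B(R)$ with probability $1$, and the limit $\tilde{X}$ is continuous on $B(R)$; letting $R\to\infty$ through integers gives continuity on all of $\Rd$ with probability~$1$.

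The main obstacle is the first part: one must identify $\{(G(\Gamma_n,U_n),U_n)\}$ as a Poisson process with the correct intensity and then verify the Campbell integrability condition, carefully separating the small-jump contribution (controlled by $\int_0^1 x\,\rho(\dd x)<\infty$, which is in force in Theorem~\ref{thm:compoundpoissonrepresentation}) from the large-jump contribution (controlled by the $\gamma$-moments). Once the a.s.\ finiteness of $\sum_n G(\Gamma_n,U_n)h(U_n)$ is in hand, both boundedness and — under continuity of $f$ — continuity follow by the dominated tail-estimate argument above, since the bound is uniform in $v$ over the given bounded set.
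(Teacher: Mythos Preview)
Your proposal is correct and follows the same route as the paper: bound each term by the uniform envelope $G(\Gamma_n,U_n)\sup_{v\in B}f(v-U_n)$, show this envelope series converges almost surely, and then read off boundedness and (under continuous $f$) continuity via uniform convergence of the partial sums. The paper obtains the a.s.\ convergence of the envelope series by a direct citation of \cite[Theorem~3.4.1]{Samorodnitsky2016} rather than unpacking it via the Poisson/Campbell argument as you do, but this is a presentational rather than substantive difference.
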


\begin{proof}
The $n$th term in \eqref{eq:ztildedef} is non-negative and bounded (uniformly in $v\in B$) from above by $G(\Gamma_n,U_n)\sup_{u\in B}f(u-U_n)$. Since the series 
\begin{equation}\label{eq:uniformupperbound}
\sum_{n=1}^\infty G(\Gamma_n,U_n)\sup_{u\in B}f(u-U_n)
\end{equation}
converges almost surely by \cite[Theorem~3.4.1]{Samorodnitsky2016}, the boundedness follows. If, additionally, $f$ is continuous then each term in \eqref{eq:ztildedef} is continuous, and the convergence of the series is uniform due to \eqref{eq:uniformupperbound}. Thereby the continuity of $(\tilde{X}_v)_{v\in B}$ follows.
\end{proof}

The corollary almost immediately gives the desired result for $(Z_v)_{v\in B}$ and $(Y^-_v)_{v\in B}$.

\begin{lemma}\label{lem:compoundcont}
Under Assumption~\ref{ass:fullassumption1} the fields $(Z_v)$ and $(Y^-_v)$ both have continuous versions. In particular, they are almost surely bounded, i.e. 
\[
\PP(\sup_{v\in B}\abs{Z_v}<\infty)=\PP(\sup_{v\in B}\abs{Y_v^-}<\infty)=1.
\]
\end{lemma}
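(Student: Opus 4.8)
The plan is to derive both statements from Theorem~\ref{thm:compoundpoissonrepresentation} and Corollary~\ref{cor:ztilde}, applied directly to $(Z_v)$ and, after a reflection in the jump variable, to $(Y^-_v)$. Throughout I use that under Assumption~\ref{ass:fullassumption1} the kernel $f$ is H\"older continuous, hence continuous, so the continuity part of Corollary~\ref{cor:ztilde} is available.

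First I would treat $(Z_v)$. By construction $\Lambda_1$ is a stationary isotropic L\'evy basis with representation $C(\lambda\dagger\Lambda_1(A))=\int_{A\times\R}(\e^{i\lambda x}-1)F_{(1,\infty)}(\du,\dd x)$, where $F_{(1,\infty)}=m\otimes\rho_{(1,\infty)}$. The measure $\rho_{(1,\infty)}$ is carried by $(1,\infty)\subseteq(0,\infty)$, it agrees with $\rho$ on $(1,\infty)$ so its right tail is regularly varying of index $\alpha$, it satisfies $\int_{\abs{y}>1}\abs{y}^\gamma\rho_{(1,\infty)}(\dd y)\le\int_{\abs{y}>1}\abs{y}^\gamma\rho(\dd y)<\infty$, and $\int_0^1 x\,\rho_{(1,\infty)}(\dd x)=0<\infty$. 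Hence $(\Lambda_1,f)$ satisfies Assumption~\ref{ass:minimalassumption} together with the additional hypotheses of Theorem~\ref{thm:compoundpoissonrepresentation}, so $(Z_v)_{v\in B}$ has the same finite-dimensional distributions as a field of the form \eqref{eq:ztildedef}. By Corollary~\ref{cor:ztilde}, using that $f$ is continuous, that field is a.s. bounded and continuous on $B$; since we work with separable versions, $(Z_v)_{v\in B}$ then has a continuous version, and in particular $\PP(\sup_{v\in B}\abs{Z_v}<\infty)=1$.

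Next I would treat $(Y^-_v)$. Since the jumps of $\Lambda_{2,-}$ are negative, I first set $\hat\Lambda:=-\Lambda_{2,-}$ and $\hat Y_v:=-Y^-_v=\int_\Rd\ff\,\hat\Lambda(\dd u)$. Computing cumulants and substituting $x\mapsto-x$ shows that $\hat\Lambda$ is a stationary isotropic L\'evy basis with representation $C(\lambda\dagger\hat\Lambda(A))=\int_{A\times\R}(\e^{i\lambda x}-1)(m\otimes\hat\rho)(\du,\dd x)$, where $\hat\rho$ is the image of $\rho_{(-\infty,-1)}$ under $x\mapsto-x$; thus $\hat\rho$ is a finite measure on $(1,\infty)$ with $\int_0^1 x\,\hat\rho(\dd x)=0$ and $\int_{y>1}y^\gamma\hat\rho(\dd y)=\int_{y<-1}\abs{y}^\gamma\rho(\dd y)<\infty$. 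The reflected right tail $\hat\rho((x,\infty))=\rho((-\infty,-x))$ need not be regularly varying, so Theorem~\ref{thm:compoundpoissonrepresentation} cannot be quoted verbatim; however, inspecting its proof (and that of Corollary~\ref{cor:ztilde}) one sees that regular variation enters only to guarantee finiteness of the bound $\int_{\Rd}\int_{0<x\le1}\ff x\,\rho(\dd x)\dd u+2\int_{\Rd}\int_{x>1}f(v-u)^\gamma x^\gamma\,\rho(\dd x)\dd u$, and for $\hat\rho$ the first term vanishes while the second is finite because $\int_\Rd g^\gamma(\abs{u})\dd u<\infty$ and $\int_{x>1}x^\gamma\hat\rho(\dd x)<\infty$. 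Hence the conclusions of Theorem~\ref{thm:compoundpoissonrepresentation} and Corollary~\ref{cor:ztilde} hold for $(\hat Y_v)_{v\in B}$, so it has a continuous version on $B$; then $(Y^-_v)_{v\in B}=-(\hat Y_v)_{v\in B}$ has one too, and $\PP(\sup_{v\in B}\abs{Y^-_v}<\infty)=1$.

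The step requiring the most care is the $(Y^-_v)$ part: the negative support of its L\'evy measure forces the reflection, and one must check that the reflected --- now finite, but possibly non-regularly-varying --- L\'evy measure still triggers Theorem~\ref{thm:compoundpoissonrepresentation}, which it does because that theorem's proof only uses an integrability bound. Everything else is routine verification that the large-jump pieces of the decomposition inherit Assumption~\ref{ass:minimalassumption}.
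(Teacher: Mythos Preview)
Your proposal is correct and follows essentially the same route as the paper. For $(Z_v)$ the arguments coincide; for $(Y^-_v)$ the paper simply says the proof is ``identical, except for a change in notation in Theorem~\ref{thm:compoundpoissonrepresentation} and Corollary~\ref{cor:ztilde}'', whereas you make this explicit via the reflection $\hat\Lambda=-\Lambda_{2,-}$ and are careful to note that the reflected L\'evy measure need not inherit regular variation but that this hypothesis is never used in the proof of Theorem~\ref{thm:compoundpoissonrepresentation}.
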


\begin{proof}
We give the argument for $(Z_v)$. The proof for $(Y_v^-)$ is, except for a change in notation in Theorem~\ref{thm:compoundpoissonrepresentation} and Corollary~\ref{cor:ztilde}, identical. First note that $(Z_v)_{v\in B}$ satisfies the conditions of Theorem~\ref{thm:compoundpoissonrepresentation}. Recall that we have chosen a separable version of $(Z_v)_{v\in B}$ and let $T\subset B$ be the corresponding countable separating dense subset. By Corollary~\ref{cor:ztilde} and the equality of distributions given in Theorem~\ref{thm:compoundpoissonrepresentation} the field $(Z_v)_{v\in B}$ is uniformly continuous on $T$ with probability 1.  In particular, it has a continuous version on $B$. 
\end{proof}

\begin{lemma}\label{lem:finitevarbounded}
Under Assumption~\ref{ass:fullassumption2} the field $(X_v)_{v\in B}$ has a version that is almost surely bounded.
\end{lemma}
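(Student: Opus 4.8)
The plan is to establish the boundedness of $(X_v)_{v\in B}$ under Assumption~\ref{ass:fullassumption2} by applying the same decomposition strategy as for the H\"older case, but replacing the continuity argument for the high-activity part $(Y_v^C)$ by a direct pathwise boundedness estimate. Recall that under Assumption~\ref{ass:fullassumption2} we have $\theta = 0$ and $\int_{\abs{y}\le 1} \abs{y}\rho(\dd y)<\infty$, so the L\'evy basis has finite variation. Write $X_v = Z_v + Y_v^- + Y_v^C$ as before. The fields $(Z_v)$ and $(Y_v^-)$ are handled exactly as in Lemma~\ref{lem:compoundcont}: both satisfy the hypotheses of Theorem~\ref{thm:compoundpoissonrepresentation} (their L\'evy measures are supported on $(0,\infty)$, resp. after sign flip on $(-\infty,-1)$, and have finite first moment near $0$, which for $Y_v^-$ is automatic since the support is bounded away from $0$), so Corollary~\ref{cor:ztilde} gives almost sure boundedness on $B$ of the separable versions. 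Note that this part does \emph{not} use H\"older continuity of $f$, only lower semicontinuity and the domination by $g$ from Assumption~\ref{ass:minimalassumption}.

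The remaining and only genuinely new step is to bound $(Y_v^C)_{v\in B}$, the part driven by $\Lambda_{2,C}$, whose L\'evy measure $\rho_{[-1,1]}$ is concentrated on $[-1,1]$. Because $\theta=0$ and $\int_{\abs y\le 1}\abs y\rho(\dd y)<\infty$, the basis $\Lambda_{2,C}$ is of finite variation, so it can be rewritten with a drift-type representation: there is a (possibly new) constant $a'\in\R$ with
\[
	C(\lambda\dagger\Lambda_{2,C}(A)) = i\lambda a'\abs{A} + \int_{A\times\R}(\e^{i\lambda x}-1)F_{[-1,1]}(\du,\dd x),
\]
and accordingly $Y_v^C = a'\int_\Rd f(v-u)\,\dd u + \widetilde Y_v^C$, where $\widetilde Y_v^C$ is driven by the pure-jump finite-variation basis with L\'evy measure $\rho_{[-1,1]}$ restricted to, say, $(0,1]$ and $[-1,0)$. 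The deterministic part is a finite constant since $\int_\Rd f\le\int_\Rd g(\abs u)\,\dd u<\infty$ (using $g^\gamma$ integrable with $\gamma\le 1$ and $g\le 1$, or arguing directly). For the jump part one splits the positive and negative jumps; the positive-jump field again fits Theorem~\ref{thm:compoundpoissonrepresentation} (support in $(0,1]$, finite first moment near $0$) and Corollary~\ref{cor:ztilde} applies, and the negative-jump field is the negative of another such field. Summing these almost surely bounded separable components yields a version of $(X_v)_{v\in B}$ that is almost surely bounded.

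The main obstacle is the bookkeeping around the finite-variation rewriting: one must verify carefully that under Assumption~\ref{ass:fullassumption2} the component $\Lambda_{2,C}$ (which in the original L\'evy--Khintchine form still carries the compensator $-i\lambda x\mathbf 1_{[-1,1]}(x)$) can legitimately be recentered into pure-jump-plus-drift form, i.e.\ that $\int_{\abs y\le1}\abs y\rho(\dd y)<\infty$ indeed lets us absorb the compensating term $\int_{A\times[-1,1]}(-i\lambda x)F(\du,\dd x)$ into the drift $i\lambda a'\abs A$ without affecting finiteness, and that the resulting integral fields are well-defined in the sense of Rajput--Rosi\'nski. Once that is in place, every piece is covered by results already proved in this section. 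I would present the proof as: (i) decompose $X_v = Z_v + Y_v^- + Y_v^C$; (ii) cite Lemma~\ref{lem:compoundcont} for $(Z_v)$ and $(Y_v^-)$ --- noting its proof uses only Assumption~\ref{ass:minimalassumption} plus the support/moment conditions, not H\"older continuity, and these hold here; (iii) rewrite $Y_v^C$ as a finite drift term plus positive- and negative-jump finite-variation fields, apply Theorem~\ref{thm:compoundpoissonrepresentation} and Corollary~\ref{cor:ztilde} to each; (iv) conclude that the sum, restricted to the common countable separating set, is almost surely bounded, hence admits an almost surely bounded version on $B$.
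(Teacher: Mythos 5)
Your proof is correct and rests on the same core idea as the paper's: use the finite-variation hypothesis to pass to a pure-jump-plus-drift representation, split the jumps by sign, and apply Theorem~\ref{thm:compoundpoissonrepresentation} and Corollary~\ref{cor:ztilde} to each signed jump field, while the drift contributes a finite constant $\tilde a\int_{\Rd}f(u)\,\dd u$. The only difference is organizational: you route through the existing decomposition $X_v=Z_v+Y_v^-+Y_v^C$ and then further split $Y_v^C$ into drift plus small positive jumps plus small negative jumps, arriving at five pieces; the paper instead decomposes $\Lambda=\Lambda^++\Lambda^-+\tilde am$ in a single step, folding the large and small positive jumps into one field $(X_v^+)$ and the large and small negative jumps into $(X_v^-)$, so it only has to invoke Theorem~\ref{thm:compoundpoissonrepresentation}/Corollary~\ref{cor:ztilde} twice. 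Both are valid; the paper's is simply leaner bookkeeping. Your remark that the boundedness half of Lemma~\ref{lem:compoundcont} uses only Assumption~\ref{ass:minimalassumption} (not the H\"older hypothesis) is accurate and is exactly the observation needed to reuse that machinery here, and your caution about recentering the compensator $-i\lambda x\II{[-1,1]}(x)$ into the drift $a'$ under $\int_{\abs y\le 1}\abs y\,\rho(\dd y)<\infty$ is the right thing to verify.
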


\begin{proof}
We can decompose $\Lambda=\Lambda^++\Lambda^-+\tilde{a}m$, where $\tilde{a}m$ is the scaled Lebesgue measure for some constant $\tilde{a}$, and $\Lambda^+$ and $\Lambda^-$ are independent, given such that
\begin{align*}
	C(\lambda \dagger \Lambda^+(A))
	&=	\int_{A \times \R} \bigl( \e^{i\lambda x} - 1\bigr) F_{(0,\infty)}(\du, \dd x)\\
	C(\lambda \dagger \Lambda^-(A))
	&=	\int_{A \times \R} \bigl( \e^{i\lambda x} - 1\bigr) F_{(-\infty,0)}(\du, \dd x).
\end{align*}
respectively. We let $(X_v^+)$ and $(X_v^-)$ be the random fields obtained by integrating with respect to $\Lambda^+$ and $\Lambda^-$. It suffices to show that each of $(X_v^+)_{v\in B}$ and $(X_v^-)_{v\in B}$ have versions with bounded sample paths. We focus on $(X_v^+)_{v\in B}$, since the proof for $(X_v^-)_{v\in B}$ is identical. Note that $(X_v^+)_{v\in B}$ satisfies the conditions of Theorem~\ref{thm:compoundpoissonrepresentation} such that, by Corollary~\ref{cor:ztilde}, there exists $(\tilde{X}_v^+)_{v\in B}$ that is almost surely bounded and has the same finite dimensional distributions as $(X_v^+)_{v\in B }$. Recall that we have chosen a separable version of $(X_v^+)_{v\in B}$ and let $T\subset B$ be the corresponding countable separating dense subset. Now, the distributions of $(X_v^+)_{v\in T}$ and $(\tilde{X}_v^+)_{v\in T}$ are identical, which concludes the proof.
\end{proof}

\begin{proof}[{\bf Proof of Theorem~\ref{thm:tailtheorem0}}]
Fix the bounded index set $B\subseteq \Rd$.
Let $m$ denote Lebesgue measure, and define the function $H$ by
\[
	H(x) = m \otimes \rho 
	\bigl( \{(u,z)\in\Rd\times \R \::\: \sup_{v\in B} z \ff > x\} \bigr)
\]
for $x> 0$.
For convenience, define $s_u = (\sup_{v\in B} \ff)^{-1}\ge 1$ for all $u\in\Rd$. With this notation at hand, we see that
\begin{align*}
	\frac{H(x)}{\rho((x,\infty))} 
	= \int_\Rd \frac{\rho((s_u x,\infty))}{\rho((x,\infty))} \dd u ,
\end{align*}
and since $\rho$ has a regularly varying right tail of index $\alpha$, we obtain the convergence
\[
	\frac{\rho((s_ux,\infty))}{\rho((x,\infty))}
	\to
	s_u^{-\alpha},
	\qquad \text{as } x\to\infty ,
\]
for all $u\in\Rd$. Now choose $\gamma<\alpha$ according to \eqref{eq:gammamomentkernel} satisfying that $g^\gamma$ is integrable. 
This implies in particular that $f^\gamma$ is integrable, and also $u\mapsto s_u^{-\gamma}$ is so. Since $s_u\ge 1$ for all $u\in\Rd$ we find by dominated convergence, using the bound \eqref{eq:regvarproductbound}, that
\[
	\frac{H(x)}{\rho((x,\infty))} \to \int_\Rd \bigl(\sup_{v\in B }\ff\bigr)^{\alpha} \dd u
	= \int_\Rd \sup_{v\in B } \ffa \dd u < \infty
\]
as $x\to\infty$. Therefore, the distribution $1-\min\{H,1\}\in \RV$, and thus it is especially in the class of subexponential distributions. 
Recall that $(X_v)_{v\in B}$ is chosen separable and let $T\subset B$ be the corresponding countable separating dense subset. By Lemmas~\ref{lem:fieldcontinuity}--\ref{lem:finitevarbounded}, the field $(X_v)$ is almost surely bounded on $B$,
\[
	\PP\bigl( \sup_{v\in B} \abs{X_v} < \infty\bigr)=	\PP\bigl( \sup_{v\in T} \abs{X_v} < \infty\bigr) = 1 ,
\]
and, using that $\sup_{v\in B} X_v$ and $\sup_{v\in T} X_v$ coincide, we conclude by \cite[Theorem~3.1]{RosinskiSamorodnitsky1993} that
\begin{equation*}
	\PP\bigl( \sup_{v\in B} X_v > x \bigr)
	\sim
	H(x)
	\sim
	\rho((x,\infty)) \int_\Rd \sup_{v\in B } \ffa \dd u
\end{equation*}
as $x\to\infty$.
\end{proof}

\begin{proof}[{\bf Proof of Theorem~\ref{thm:tailtheorem}}]
Fix the bounded index set $B\subseteq \Rd$.
Let $F$ denote the distribution of $\sup_{v\in B} X_v$, which, by Theorem~\ref{thm:tailtheorem0} and the fact that $\rho$ is regularly varying, satisfies $F\in \RV$. In particular, $F$ is subexponential and hence
\begin{equation}\label{eq:Xrhoequivalence1}
	\frac{\FF(x - y)}
	{\FF(x)}
	\to 1
\end{equation}
for all $y\in\R$ as $x\to\infty$. Let $\pi^1$ denote the distribution of $(Y_v^1)_v$, and let $y^*=\sup_{v\in B} y_v$ for a deterministic field $(y_v)_v$. Then
\begin{align*}
	\frac{\PP(\sup_{v\in B} (X_v + Y_v^1)>x)}{\FF(x)} &
	\le \frac{\PP(\sup_{v\in B} X_v > x - \sup_{v\in B} Y_v^1)}
	{\FF(x)} \\ &
	= \int \frac{\FF(x-y^*)}{\FF(x)} \pi^1 (\dd y) ,
\end{align*} 
with the integrand tending to $1$ by \eqref{eq:Xrhoequivalence1} above. Now fix $\beta>\alpha$ such that \eqref{eq:tildeYassumption} is satisfied. By Lemma~\ref{lem:regvaryingbound}, the integrand has an $x$-independent and integrable upper bound, and we thus obtain by dominated convergence and \eqref{eq:Xrhoequivalence0} that
\[
	\limsup_{x\to\infty} 
	\frac{\PP(\sup_{v\in B} (X_v + Y_v^1)>x)}{\rho((x,\infty))}
	\le \int_\Rd \sup_{v\in B} \ffa \dd u .
\]
Since $(\inf_{v\in B} Y_v^1)_+$ also has $\beta$-moment, we similarly find that
\[
	\liminf_{x\to\infty} 
	\frac{\PP(\sup_{v\in B} (X_v + Y_v^1)>x)}{\rho((x,\infty))}
	\ge \int_\Rd \sup_{v\in B} \ffa \dd u ,
\]
proving the claim.
\end{proof}

	\section{The geometry}
	\label{sec:geometry}

The purpose of this section is to introduce the geometry necessary to the proof of the extremal results Theorems~\ref{thm:extremelevy} and \ref{thm:extremelevyv2} above. In particular, we construct certain boxes which will be used to approximate the continuous index sets $(C_n)$, and we show their limiting behavior. Recall by Assumption~\ref{ass:Cnassumption} that the index sets $(C_n)$ are $p$-convex bodies with relatively bounded intrinsic volumes as in \eqref{eq:boundedintvolumes}.

We first introduce some new notation which will be used throughout the remainder of the paper. For two sets $A, B\subseteq \R^d$, we define their Minkowski sum by $A\oplus B=\{a+b\mid a\in A,b\in B\}$. When referring to (discrete or continuous) cubes with side-length equal to $s> 0$, we mean a box where all side-lengths equal $s$. Moreover, we say that $C_r(u)$ is the closed $r$-cube with corner $u\in\Rd$ if
\[
	C_r(u) = u + [0,r]^d
\]
for $r>0$. Lastly, we let $\mathrm{dist}(u,A)$ be the distance from $u\in\Rd$ to the set $A\subseteq \Rd$.

The following corollary, which is shown in \cite{StehrRonnNielsen2020}, is a consequence of the famous Steiner Theorem from convex geometry stating that the volume of the Minkowski sum of a convex body in $\Rd$ and a ball $B(r)$ of radius $r$ is a polynomial of degree $d$ in $r$.

\begin{corollary}[{\cite[Corollary~1]{StehrRonnNielsen2020}}]\label{cor:intrinsic}
Let $C\subseteq\R^d$ be a convex body with boundary $\partial C$ and let $r>0$. Then
\[
\sum_{j=0}^{d-1} \omega_{d-j}V_{j}(C)r^{d-j}\leq \abs{\partial C\oplus B(r)}\leq 2\sum_{j=0}^{d-1} \omega_{d-j}V_{j}(C)r^{d-j} ,
\] 
where $V_j(C)$ is the $j$th intrinsic volume of $C$ and $\omega_j$ is the volume of the $j$-dimensional unit ball.
\end{corollary}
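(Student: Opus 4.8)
The plan is to apply the Steiner formula for the volume of a parallel body together with an inclusion–exclusion argument on the two parallel bodies $C \oplus B(r)$ and $C \ominus B(r)$ (the inner parallel body). First I would recall that for a convex body $C$, Steiner's theorem gives
\[
	\abs{C \oplus B(r)} = \sum_{j=0}^{d} \omega_{d-j} V_j(C) r^{d-j},
\]
where the term $j=d$ equals $\abs{C}$ (since $\omega_0 = 1$). The key geometric observation is that $\partial C \oplus B(r)$ is exactly the set of points at distance at most $r$ from $\partial C$, which decomposes as the union of the "outer shell" $(C \oplus B(r)) \setminus \mathrm{int}(C)$ and the "inner shell" $C \setminus (C \ominus B(r))$, where $C \ominus B(r) = \{u : \mathrm{dist}(u, \partial C) \ge r,\ u \in C\}$. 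These two shells overlap only on $\partial C$, which has Lebesgue measure zero, so
\[
	\abs{\partial C \oplus B(r)} = \bigl( \abs{C \oplus B(r)} - \abs{C} \bigr) + \bigl( \abs{C} - \abs{C \ominus B(r)} \bigr).
\]

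Next I would bound the two pieces. For the outer shell, Steiner's formula gives exactly
\[
	\abs{C \oplus B(r)} - \abs{C} = \sum_{j=0}^{d-1} \omega_{d-j} V_j(C) r^{d-j},
\]
which is already the claimed lower bound, since the inner shell contributes a non-negative amount; this establishes the left-hand inequality. For the upper bound I need to control the inner shell $\abs{C} - \abs{C \ominus B(r)}$. The cleanest route is the monotonicity and translation-invariance of the parallel-body construction: one has the inclusion $C \subseteq (C \ominus B(r)) \oplus B(r)$ whenever $C \ominus B(r)$ is nonempty (every point of $C$ within distance $r$ of a point of the inner parallel body), hence $\abs{C} \le \abs{(C\ominus B(r)) \oplus B(r)}$, and applying Steiner's formula to the convex body $C \ominus B(r)$ together with the monotonicity of intrinsic volumes $V_j(C \ominus B(r)) \le V_j(C)$ yields
\[
	\abs{C} - \abs{C \ominus B(r)} \le \sum_{j=0}^{d-1} \omega_{d-j} V_j(C \ominus B(r)) r^{d-j} \le \sum_{j=0}^{d-1} \omega_{d-j} V_j(C) r^{d-j}.
\]
Adding this to the exact expression for the outer shell gives $\abs{\partial C \oplus B(r)} \le 2 \sum_{j=0}^{d-1} \omega_{d-j} V_j(C) r^{d-j}$, which is the right-hand inequality.

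The main obstacle I anticipate is handling the degenerate case where $C \ominus B(r) = \emptyset$ (i.e. $r$ is large relative to the inradius of $C$): then the inner-shell term is simply $\abs{C}$ and one must argue separately that $\abs{C} \le \sum_{j=0}^{d-1} \omega_{d-j} V_j(C) r^{d-j}$, which follows because in this regime $C$ is contained in a ball of radius $r$ centered at any of its points, so $\abs{C} \le \abs{B(r)} = \omega_d r^d \le \omega_d V_0(C) r^d$, and this last term is one of the summands. A secondary technical point is verifying that $C \ominus B(r)$ is indeed convex (it is, being an intersection of translates of the convex set $C$) so that Steiner's formula applies to it; both of these are standard facts from \cite[Chapter~4]{Schneider1993} and require no real computation.
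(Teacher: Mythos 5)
Your decomposition of $\partial C \oplus B(r)$ into the outer shell $(C \oplus B(r)) \setminus \mathrm{int}(C)$ and the inner shell $C \setminus (C \ominus B(r))$ is the right starting point, and the outer shell is exactly Steiner, so the lower bound is fine. The gap is in the upper bound for the inner shell. The claimed inclusion $C \subseteq (C \ominus B(r)) \oplus B(r)$ is false for convex bodies with corners: take $C$ an equilateral triangle with inradius $\rho_0 > r$ in $\R^2$; then $C \ominus B(r)$ is a smaller equilateral triangle, and the apex of $C$ lies at distance $2r$ (not $\le r$) from the nearest point of $C\ominus B(r)$, the factor $2 = 1/\sin 30^\circ$ coming from the half-angle at the vertex. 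Hence $(C\ominus B(r))\oplus B(r) \subsetneq C$ (the corners are shaved off). Correspondingly, the numerical inequality $\abs{C} - \abs{C\ominus B(r)} \le \sum_{j=0}^{d-1}\omega_{d-j}V_j(C\ominus B(r)) r^{d-j}$ that you derive from it actually fails: for the equilateral triangle of side $a$ one computes (LHS $-$ RHS) $= a^2 t^2\bigl(\sqrt{3}/4 - \pi/12\bigr) > 0$ with $t = r/\rho_0$. Your treatment of the degenerate case $C\ominus B(r) = \emptyset$ is also wrong: inradius less than $r$ does not imply $C$ is contained in a ball of radius $r$ centered at one of its points (consider a long thin rectangle, say $[0,100]\times[0,0.1]$ with $r=1$).

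The target inner-shell bound $\abs{C}-\abs{C\ominus B(r)} \le \sum_{j=0}^{d-1}\omega_{d-j}V_j(C)r^{d-j}$ is nonetheless true; it just needs a different argument. A clean one: applying the coarea formula to the $1$-Lipschitz distance function $u(x) = \mathrm{dist}(x, C^c)$, which has $\abs{\nabla u}=1$ a.e.\ on $C$, gives
\[
	\abs{C} - \abs{C\ominus B(r)} = \int_0^r \mathcal{H}^{d-1}\bigl(\partial(C\ominus B(s))\bigr)\,\dd s \le 2\,V_{d-1}(C)\,r = \omega_1 V_{d-1}(C)\,r ,
\]
using that $\mathcal{H}^{d-1}(\partial K) = 2V_{d-1}(K)$ for full-dimensional convex $K$ (smaller when $K$ is lower-dimensional or empty, which also dispatches the degenerate case), together with the monotonicity $V_{d-1}(C\ominus B(s)) \le V_{d-1}(C)$. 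This is even a single term of the sum, so adding it to the outer-shell identity yields the stated bound with the factor $2$.
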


For the sequence of index sets $(C_n)_{n\in\NN}$ satisfying Assumption~\ref{ass:Cnassumption} we will for each $k,L\in\NN$ define $\Tt\in\NN$ by
\[
	\Tt = L\cdot \floor*{\sqrt[d]{\abs{C_n}/k}}.
\] 
The integer $\Tt$ will serve as the side-length of a sequence of cubes approximating the index sets $C_n$. Formally, for each $z=(z_1,\ldots,z_d)\in\ZZ^d$ and $n$ large enough relative to $k$, we define the cube $\Iz\subseteq\Rd$ as
\begin{align*}
		\Iz & 
		= z \, \Tt + [0,\Tt)^d \\ &
		= \bigtimes_{i=1}^d [z_i\, \Tt, (z_i + 1)\Tt ) .
\end{align*}
Furthermore, we let $\Pp$ be the set of indices $z\in\ZZ^d$ for which $\Iz$ is contained in $C_n$, and we let $\Qq$ be the set of indices $z$ for which $\Iz$ intersects $C_n$:
\[
	\Pp = \{ z\in\Zd \::\: \Iz \subseteq C_n\}
	\quad\text{and}\quad 
	\Qq = \{ z\in\Zd \::\: \Iz \cap C_n \neq \emptyset\} . 
\]
We let the number of such inner and outer approximating boxes be denoted by $\pp=\abs{\Pp}$ and $\qq=\abs{\Qq}$, respectively.
Note that, by construction, $\pp\le k/L^d$ and $\qq\ge k/L^d$ for values of $n$ large enough relative to $k$. 
When proving our results we essentially divide $\Iz$ into cubes of side-length $L$, $C_L(v)$ for $v\in(L\ZZ)^d$. To this end, define the grid points of $\Iz$ by $\Jz=\Iz \cap (L\ZZ)^d$, and note that $\abs{\Jz} = (\Tt)^d/L^d \sim \abs{C_n}/k$ as $n\to\infty$.
Lastly, we define a set of grid points which, when continuously filled with cubes $C_L(v)$, approximate $C_n$ from the inside and outside, respectively:
\[
	\dnm = \bigcup_{z\in\Pp} \Jz 
	\quad\text{and}\quad
	\dnp = \bigcup_{z\in\Qq} \Jz ,
\]
which then satisfy
\begin{equation}\label{eq:DnBnInequality}
	\bigcup_{z\in\dnm} C_L(z) 
	\subseteq 
	C_n 
	\subseteq
	\bigcup_{z\in\dnp} C_L(z)  .
\end{equation}

\begin{theorem}\label{thm:maingeometrictheorem}
Let $(C_n)_{n\in\NN}$ satisfy Assumption~\ref{ass:Cnassumption}. Then,
\begin{enumerate}[label=\normalfont(\roman*)]
	\item \label{eq:geomthm2} for all $L\in\NN$ the sequences $\pp$ and $\qq$, defined above, satisfy that
	\[
		\liminf_{n\to\infty} \pp \sim \frac{k}{L^d}
		\qquad\text{and}\qquad
		\limsup_{n\to\infty} \qq \sim \frac{k}{L^d}
	\]
	as $k\to\infty$, 
	\item\label{eq:geomthm3} for each $k$, $L$ and $n$ with $n$ large enough relative to $k$, it holds that $\dnp\subseteq K_{n,k,L}$, where $K_{n,k,L}$ is the cube
	\[
		K_{n,k,L}=\bigcup_{z\in \Nk } \Jz,    
	\]
	and $\Nk$ is on the form $\Nk=[-c_{k,L} \:,\: c_{k,L}]^d\cap \ZZ^d$ for some $c_{k,L}<\infty$,
	\item\label{eq:geomthm4} for all $L\in\NN$ there exists $c_L<\infty$ such that for all $n\in\NN$
	\[
		\dnp \subseteq \KL=\big[-c_L  \cdot \abs{C_n}^{1/d} \:,\: c_L\cdot \abs{C_n}^{1/d}\big]^d\cap (L\ZZ)^d .
	\]
\end{enumerate}
\end{theorem}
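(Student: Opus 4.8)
The plan is to prove the three parts in order, exploiting the fact that the cubes $\Iz$ have side-length $\Tt = L\floor*{\sqrt[d]{\abs{C_n}/k}}$, so each has volume $(\Tt)^d$, and that by construction $\Tt^d \sim \abs{C_n}/k$ as $n\to\infty$ for fixed $k,L$.

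\textbf{Part \ref{eq:geomthm2}.} First I would observe that the cubes $\{\Iz : z\in\Qq\}$ form a partition-like covering of $C_n$ and of $\bigcup_{z\in\Pp}\Iz$, so that
\[
	\pp\,(\Tt)^d \le \abs{C_n} \le \qq\,(\Tt)^d .
\]
The difference $\qq-\pp$ counts cubes $\Iz$ meeting $\partial C_n$; every such cube lies in $\partial C_n \oplus B(\sqrt d\,\Tt)$, so $(\qq-\pp)(\Tt)^d \le \abs{\partial C_n \oplus B(\sqrt d\,\Tt)}$. Since $C_n = \bigcup_{i=1}^p C_{n,i}$, we have $\partial C_n \subseteq \bigcup_i \partial C_{n,i}$, and Corollary~\ref{cor:intrinsic} gives
\[
	\abs{\partial C_n \oplus B(\sqrt d\,\Tt)}
	\le 2\sum_{i=1}^p\sum_{j=0}^{d-1}\omega_{d-j}V_j(C_{n,i})(\sqrt d\,\Tt)^{d-j} .
\]
Dividing by $\abs{C_n}$, using $(\Tt)^d \sim \abs{C_n}/k$ and writing $(\Tt)^{d-j} = (\Tt)^d (\Tt)^{-j} \sim (\abs{C_n}/k)\,(\abs{C_n}/k)^{-j/d}$, the $j$th summand is asymptotically a constant times $k^{-1}\cdot k^{j/d}\cdot \bigl(\sum_i V_j(C_{n,i})\bigr)/\abs{C_n}^{j/d}$, which is $O(k^{j/d-1})$ by Assumption~\ref{ass:Cnassumption}; the $j=0$ term contributes the term $V_0=1$ summed over $i$, i.e.\ $O(p/k)\cdot k^{?}$—more precisely $\omega_d (\sqrt d)^d p (\Tt)^d/\abs{C_n} \sim \omega_d(\sqrt d)^d p/k$, also $O(1/k)$. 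Hence $\limsup_n (\qq-\pp)(\Tt)^d/\abs{C_n} \to 0$ as $k\to\infty$, and since $\pp(\Tt)^d/\abs{C_n} \le 1 \le \qq(\Tt)^d/\abs{C_n}$ with $(\Tt)^d/\abs{C_n}\sim 1/k$, both $\pp$ and $\qq$ satisfy $\pp \sim \qq \sim k/(\Tt)^d\cdot(\Tt)^d/\abs{C_n}\cdot\abs{C_n}$—rephrased cleanly, $\liminf_n \pp \sim k/L^d$ and $\limsup_n \qq \sim k/L^d$ as $k\to\infty$, using $\abs{C_n}/(\Tt)^d \to (\floor*{\sqrt[d]{\abs{C_n}/k}})^{-d}\abs{C_n}/L^d$ which tends to $k/L^d$ since $\floor*{\sqrt[d]{\abs{C_n}/k}}^d \sim \abs{C_n}/k$.

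\textbf{Parts \ref{eq:geomthm3} and \ref{eq:geomthm4}.} These are boundedness statements. Since $0\in C_n$ and $C_n$ is connected with $\abs{C_n}<\infty$, one has to rule out $C_n$ being unboundedly "thin and long''; but $C_n$ is a finite union of convex bodies, each of which has $V_{d-1}(C_{n,i})$ bounded relative to $\abs{C_n}^{(d-1)/d}$ by Assumption~\ref{ass:Cnassumption}, and the isodiametric/isoperimetric-type inequality bounds the diameter of a convex body by a constant times its $(d-1)$st intrinsic volume. Concretely, for a convex body $K$, $\mathrm{diam}(K) \le c\, V_1(K)$ and more usefully $V_1(K)$ itself is controlled by a constant multiple of $V_{d-1}(K)^{1/(d-1)}$ when... — rather than chase the sharp chain, I would argue directly: every $\Iz\in\Qq$ meets $C_n$, hence meets some $C_{n,i}$, which is contained in a ball of radius $R_{n,i}=\mathrm{diam}(C_{n,i})$ around any of its points, and since $0\in C_n$ and $C_n$ is connected, a standard covering argument along a connecting path gives $\mathrm{diam}(C_n) \le \sum_{i=1}^p \mathrm{diam}(C_{n,i})$ up to a factor. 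Each $\mathrm{diam}(C_{n,i})$ is comparable to $V_1(C_{n,i})$, which by \eqref{eq:boundedintvolumes} with $j=1$ is $O(\abs{C_n}^{1/d})$. Therefore $C_n \subseteq B(c_L'\abs{C_n}^{1/d})$ for a constant $c_L'$ independent of $n$, and then every grid point of $\dnp$ lies within distance $\sqrt d\,\Tt \le \sqrt d\,\abs{C_n}^{1/d}$ of $C_n$; choosing $c_L$ large enough gives $\dnp \subseteq \KL$ as in \ref{eq:geomthm4}. Part \ref{eq:geomthm3} is then the same statement for fixed $k$: with $k$ fixed, $\Tt$ is a fixed multiple of $\floor*{\sqrt[d]{\abs{C_n}/k}}$, and scaling the grid $(L\ZZ)^d$ up to the coarse grid $\Tt\ZZ^d$, the containment $\dnp\subseteq \KL$ translates into $\dnp\subseteq K_{n,k,L}$ with $\Nk=[-c_{k,L},c_{k,L}]^d\cap\ZZ^d$ where $c_{k,L} = \lceil c_L \abs{C_n}^{1/d}/\Tt\rceil$, finite since $\Tt \ge L\cdot 1$ eventually, i.e.\ $c_{k,L} = O(\sqrt[d]{k})$.

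\textbf{Main obstacle.} The delicate point is Part \ref{eq:geomthm2}: making precise the double limit "$\liminf_n \pp \sim k/L^d$ as $k\to\infty$'' and checking that every term coming from Corollary~\ref{cor:intrinsic} — in particular the $j=0$ and $j=1$ terms — is killed after dividing by $\abs{C_n}$ and sending first $n\to\infty$ and then $k\to\infty$. One must be careful that the constant in front of the $j=0$ intrinsic-volume term involves $p$ (the fixed number of convex bodies) and the side-length $\sqrt d\,\Tt$, contributing $O(p/k)$, which indeed vanishes as $k\to\infty$ with $p$ fixed; the $j=1,\dots,d-1$ terms vanish by \eqref{eq:boundedintvolumes} combined with the extra factor $k^{j/d-1}\to 0$. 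The other parts are comparatively routine geometric bookkeeping once the diameter bound $\mathrm{diam}(C_n) = O(\abs{C_n}^{1/d})$ is in hand.
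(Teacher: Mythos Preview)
Your argument for Part~\ref{eq:geomthm2} is essentially the paper's: bound $\qq-\pp$ by the number of cubes meeting $\partial C_n$, push those cubes into a tube $\partial C_n\oplus B(r\,\Tt)$, control the tube volume via Corollary~\ref{cor:intrinsic} and the intrinsic-volume bound \eqref{eq:boundedintvolumes}, and sandwich with $\pp\le \abs{C_n}/(\Tt)^d\le \qq$. The paper organises the bookkeeping by introducing $\tildet=L(\abs{C_n}/k)^{1/d}$ and showing $\frac{L^d}{k}\frac{1}{\tildet^d}\abs{\partial C_n\oplus B(r\tildet)}\to 0$ directly, but the content is the same. (Minor slip: you write $(\Tt)^d/\abs{C_n}\sim 1/k$; it is $\sim L^d/k$, which is what you actually need.)

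For Parts~\ref{eq:geomthm3} and~\ref{eq:geomthm4} you take a genuinely different route. You bound $\mathrm{diam}(C_n)$ directly via the $j=1$ case of \eqref{eq:boundedintvolumes}: for a convex body $K$ one has $\mathrm{diam}(K)\le V_1(K)$ (by monotonicity $V_1(K)\ge V_1([x,y])=\abs{x-y}$ for any $x,y\in K$), and connectedness of $C_n$ gives $\mathrm{diam}(C_n)\le\sum_i\mathrm{diam}(C_{n,i})$. This yields \ref{eq:geomthm4} first, and then \ref{eq:geomthm3} by rescaling to the coarse grid $\Tt\ZZ^d$. The paper instead proves \ref{eq:geomthm3} first and more cheaply: Part~\ref{eq:geomthm2} already shows $\qq$ is bounded in $n$, and since $C_n$ is connected and contains $0$, the index set $\Qq\subseteq\Zd$ is lattice-connected, contains $0$, and has at most $c_{k,L}$ points, hence lies in $[-c_{k,L},c_{k,L}]^d$; Part~\ref{eq:geomthm4} then follows from \ref{eq:geomthm3}. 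The paper's approach is slicker because it recycles the count from \ref{eq:geomthm2} rather than invoking the $j=1$ assumption separately; your approach also works and has the minor advantage of giving the explicit order $c_{k,L}=O(k^{1/d})$. One caution: your expression $c_{k,L}=\lceil c_L\abs{C_n}^{1/d}/\Tt\rceil$ depends on $n$ as written; you must replace it by its supremum over large $n$ (finite, since $\abs{C_n}^{1/d}/\Tt\to k^{1/d}/L$) to obtain the $n$-independent $\Nk$ the statement requires.
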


We note that the discrete index set $\Nk$ in \ref{eq:geomthm3} above does not depend on $n$. In particular, for any fixed $k$ and $L$, all $\dnp$ are contained in the same finite collection of (increasing) sets of grid points $\Jz$.

\begin{proof}
For each $n,k,L\in\NN$ define $\tildet=L\,\bigl(\abs{C_n}/ k \bigr)^{1/d}$. Turning to Corollary~\ref{cor:intrinsic}, we obtain
\[
	\abs{\partial C_n\oplus B(\tildet)}
	\le \sum_{i=1}^p \abs{\partial C_{n,i}\oplus B(\tildet)}
	\le 2\sum_{j=0}^{d-1} \omega_{d-j}
	\Bigl(\sum_{i=1}^p V_{j}(C_{n,i})\Bigr)\tildet^{d-j} ,
\]
which clearly implies that
\[
	\frac{L^d}{k}\frac{1}{\tildet^d}\abs{\partial C_n\oplus B(\tildet)}
	\le 2\sum_{j=0}^{d-1} \omega_{d-j}
	\frac{\sum_{i=1}^pV_{j}(C_{n,i})}{\abs{C_n}^{j/d}}
	\Bigl(\frac{L}{\sqrt[d]{k}}\Bigr)^{d-j} .
\]
By \eqref{eq:boundedintvolumes} of Assumption~\ref{ass:Cnassumption} we conclude the convergence
\[
	\limsup_{n\to\infty}
	\frac{L^d}{k}\frac{1}{\tildet^d}\abs{\partial C_n\oplus B(\tildet)}
	\to 0
\] 
as $k\to\infty$. 
In fact, replacing $L$ by 
$r\cdot L$ for any $r\in\NN$ 
and realizing that $\tilde t_{n,k,r\cdot L}=r\,\tildet$ show that also
\[
	\limsup_{n\to\infty}
	\frac{L^d}{k}\frac{1}{\tildet^d}\abs{\partial C_n\oplus B(r\, \tildet)}
	\to 0
\] 
as $k\to\infty$ for all $r\in\NN$.  
Since $\Tt\le \tildet$, and $\Tt \sim \tildet$ as $n\to\infty$, we also have
\begin{equation}\label{fml:tnklimit}
	\limsup_{n\to\infty}
	\frac{L^d}{k}\frac{1}{\Tt^d}	
	\abs{\partial C_n\oplus B(r \, \Tt)} \to 0
\end{equation}
as $k\to\infty$ for all $r\in\NN$. Since $\Tt$ denotes the side-length of the cubes $\Iz$, we can find an integer $r\in\NN$ independent of $n,k,L$ such that 
$\Iz \subseteq \partial C_n \oplus B(r\, \Tt)$
whenever $\Iz\cap \partial C_n \neq \emptyset$. Using that $\abs{\Iz}=\Tt^d$, we then find
\begin{align*}
	\frac{L^d}{k}(\qq-\pp) &
	\le 
	\frac{L^d}{k}\frac{1}{\Tt^d}
	\abs[\Big]{
	\bigcup_{\Iz\cap \partial C_n\neq \emptyset}
	\Iz } \\ &
	\le \frac{L^d}{k}\frac{1}{\Tt^d}
	\abs{ \partial C_n\oplus B(r\, \Tt)}.
\end{align*}
Together with \eqref{fml:tnklimit} and the fact that $\pp \le k/L^d \le \qq$ for $n$ large enough relative to $k$, this proves statement~\ref{eq:geomthm2}.

We now show statement~\ref{eq:geomthm3}. For each fixed $k$ and $L$, the sequence $(\qq)_n$ is bounded by a constant $c_{k,L}$ for $n$ large enough relative to $k$. Furthermore, as the origin $0\in C_n$, it must also satisfy $0\in \Qq$. By assumption, $C_n$ is connected, and thus $\Qq$ consists of at most $c_{k,L}$ points that are pairwise neighbors. In particular,
\[
	\Qq
	\subseteq 
	[-c_{k,L} \:,\: c_{k,L}]^d \cap \Zd  ,
\]
which implies the desired result. Finally, \ref{eq:geomthm4} follows from \ref{eq:geomthm3}. 
\end{proof}

Before moving on to proving our remaining main results, we recall a lemma from \cite{StehrRonnNielsen2020} which will be useful throughout the paper:

\begin{lemma}[{\cite[Lemma~1]{StehrRonnNielsen2020}}]\label{lem:Steinerintegral}
Let $C$ be a convex body and let $r\ge 0$ be fixed. For any decreasing function $g:[0,\infty) \to [0,\infty)$ satisfying $\int_0^\infty g(x) x^{d-1} \dd x<\infty$, it holds that
\begin{align}
	\label{eq:Steinerintegral1}
	\int_{(C \oplus B(r))^c} \sup_{v\in C} g(\abs{v-u}) \dd u
	 & = 
	\sum_{j=0}^{d-1} \mu_j V_j(C) \int_{r}^\infty g(x) x^{d-j-1} \dd x 
	\qquad \text{and}\\
	\label{eq:Steinerintegral2}
	\int_{C} \sup_{v\in (C \oplus B(r))^c} g(\abs{v-u}) \dd u
	 & \le 
	\sum_{j=0}^{d-1} \mu_j V_j(C) \int_{r}^\infty g(x) x^{d-j-1} \dd x
\end{align}
for constants $\mu_j$, $j=0,\dots,d-1$, independent of $C$.
\end{lemma}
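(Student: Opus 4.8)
The plan is to reduce both identities to one-dimensional radial integrals by rewriting the suprema of the decreasing function $g$ as values of $g$ at appropriate distance functions, and then to evaluate (respectively estimate) the resulting spatial integrals with the coarea formula together with Steiner's formula $\abs{C\oplus B(t)}=\sum_{j=0}^{d}\omega_{d-j}V_j(C)t^{d-j}$ for the volume of the parallel body (see \cite[Chapter~4]{Schneider1993}, and compare Corollary~\ref{cor:intrinsic}). The two distance reductions are: (i) for $u\notin C\oplus B(r)$ the compact body $C$ has a nearest point to $u$, so $\inf_{v\in C}\abs{v-u}=\mathrm{dist}(u,C)>r$, and since $g$ is decreasing $\sup_{v\in C}g(\abs{v-u})=g(\mathrm{dist}(u,C))$; (ii) for $u\in C$ and any $v$ with $\mathrm{dist}(v,C)>r$, the straight segment from $u$ to $v$ must first leave $C$ --- a length of at least $\mathrm{dist}(u,\Rd\setminus C)$ --- and then, since $x\mapsto\mathrm{dist}(x,C)$ is $1$-Lipschitz, travel a further length of at least $r$ to reach $\{\mathrm{dist}(\cdot,C)>r\}$, so that $\abs{v-u}\ge r+\mathrm{dist}(u,\Rd\setminus C)$ and hence $\sup_{v\in(C\oplus B(r))^c}g(\abs{v-u})\le g\bigl(r+\mathrm{dist}(u,\Rd\setminus C)\bigr)$.

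For \eqref{eq:Steinerintegral1} I would apply the coarea formula to $\delta_C(u)=\mathrm{dist}(u,C)$, which has $\abs{\nabla\delta_C}=1$ on $C^c$ and, for $t>0$, level set $\{\delta_C=t\}=\partial(C\oplus B(t))$, whose $(d-1)$-dimensional Hausdorff measure is the surface area $S(C\oplus B(t))$. Differentiating Steiner's formula in $t$ gives $S(C\oplus B(t))=\sum_{j=0}^{d-1}(d-j)\omega_{d-j}V_j(C)t^{d-j-1}$, so with reduction (i) the coarea formula yields
\[
	\int_{(C\oplus B(r))^c}\sup_{v\in C}g(\abs{v-u})\,\dd u
	=\int_r^\infty g(t)\,S(C\oplus B(t))\,\dd t
	=\sum_{j=0}^{d-1}(d-j)\,\omega_{d-j}\,V_j(C)\int_r^\infty g(t)\,t^{d-j-1}\,\dd t ,
\]
where interchanging the finite sum and the integral is justified by nonnegativity, and each radial integral is finite because $g\le g(0)<\infty$ and $g(x)x^{d-1}$ is integrable. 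This is \eqref{eq:Steinerintegral1} with $\mu_j=(d-j)\omega_{d-j}$, constants independent of $C$. (One could instead avoid the coarea formula here by a layer-cake computation: write $g(s)=\nu((s,\infty))$ for the Lebesgue--Stieltjes measure $\nu$ of $-g$, apply Tonelli to reduce to $\int_{(r,\infty)}\bigl(\abs{C\oplus B(t)}-\abs{C\oplus B(r)}\bigr)\nu(\dd t)$, expand by Steiner, and integrate by parts; but this only postpones the geometry.)

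For \eqref{eq:Steinerintegral2}, reduction (ii) shows it suffices to bound $\int_C g\bigl(r+\mathrm{dist}(u,\Rd\setminus C)\bigr)\,\dd u$. I would apply the coarea formula to $\delta(u)=\mathrm{dist}(u,\Rd\setminus C)$, which on $\mathrm{int}(C)$ has $\abs{\nabla\delta}=1$ almost everywhere and whose positive level set $\{\delta=t\}$ equals the boundary of the inner parallel body $C_{-t}=\{u\in\Rd:u+B(t)\subseteq C\}$; since $C_{-t}$ is a convex body contained in $C$, monotonicity of the intrinsic volumes gives $\mathcal{H}^{d-1}(\{\delta=t\})=2V_{d-1}(C_{-t})\le 2V_{d-1}(C)$. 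Using $\int_0^\infty g(r+t)\dd t=\int_r^\infty g(x)\dd x$, the coarea formula then gives
\[
	\int_C g\bigl(r+\mathrm{dist}(u,\Rd\setminus C)\bigr)\,\dd u
	=\int_0^\infty g(r+t)\,\mathcal{H}^{d-1}(\{\delta=t\})\,\dd t
	\le 2\,V_{d-1}(C)\int_r^\infty g(x)\,\dd x .
\]
Since $2=\omega_1=\mu_{d-1}$ and the $j=d-1$ summand on the right-hand side of \eqref{eq:Steinerintegral2} is exactly $\mu_{d-1}V_{d-1}(C)\int_r^\infty g(x)\dd x$ (the factor $x^{d-j-1}$ being $1$ there), while all other summands are nonnegative, this proves \eqref{eq:Steinerintegral2} with the same constants $\mu_j$.

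The main obstacle is purely geometric and sits in two spots. First, the inequality $\mathrm{dist}\bigl(u,(C\oplus B(r))^c\bigr)\ge r+\mathrm{dist}(u,\Rd\setminus C)$ for $u\in C$, which is precisely where convexity of $C$ enters (through the $1$-Lipschitz continuity of $\mathrm{dist}(\cdot,C)$). Second, the control of the inner parallel bodies: that each $C_{-t}$ is again a convex body contained in $C$, so that $V_{d-1}(C_{-t})\le V_{d-1}(C)$. Everything else --- the Steiner polynomial and the identification of its coefficients with intrinsic volumes, the relation $S=2V_{d-1}$ between surface area and the $(d-1)$-st intrinsic volume, the coarea formula for Lipschitz functions with a nonnegative integrand, measurability of the suprema via the earlier separability remarks, finiteness of the radial integrals, and the null-set subtleties near $\partial C$ and at $t$ equal to the inradius --- is standard or already supplied by the preceding material.
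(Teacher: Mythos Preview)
The paper does not prove this lemma; it is quoted verbatim from \cite{StehrRonnNielsen2020}, so there is no in-paper proof to compare against. Your argument is correct, and the explicit identification $\mu_j=(d-j)\omega_{d-j}$ is exactly what one obtains by differentiating the Steiner polynomial.

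Two small remarks. First, your comment that convexity ``enters through the $1$-Lipschitz continuity of $\mathrm{dist}(\cdot,C)$'' is off: that Lipschitz bound holds for any nonempty set. Convexity is actually used (i) to have the Steiner polynomial and intrinsic volumes at all, (ii) so that the metric projection onto $C$ is single-valued, giving $\abs{\nabla\delta_C}=1$ everywhere on $C^c$, and (iii) so that the inner parallel bodies $C_{-t}$ are again convex, whence $\mathcal H^{d-1}(\partial C_{-t})=2V_{d-1}(C_{-t})\le 2V_{d-1}(C)$ by monotonicity. Your proof uses all three, so nothing is missing; only the attribution is misplaced.

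Second, later in the paper (proof of Lemma~\ref{lem:mixingZ}) the authors invoke ``the proof of Lemma~\ref{lem:Steinerintegral}, equation~\eqref{eq:Steinerintegral2}'' to deduce the shell inequality $\abs{\Aa_n^{-s_m}}-\abs{\Aa_n^{-s_{m+1}}}\le \abs{\Aa_n\oplus B(s_{m+1})}-\abs{\Aa_n\oplus B(s_m)}$. This indicates that the original argument in \cite{StehrRonnNielsen2020} for \eqref{eq:Steinerintegral2} proceeds via a comparison of inner and outer parallel volumes, $\abs{C}-\abs{C_{-t}}\le \abs{C\oplus B(t)}-\abs{C}$, rather than via your surface-area bound. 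Your route is slightly sharper (you use only the $j=d-1$ summand) and does recover the needed volume comparison after integrating $\mathcal H^{d-1}(\partial C_{-t})\le \mathcal H^{d-1}(\partial C)\le \mathcal H^{d-1}(\partial(C\oplus B(t)))$ in $t$; it just does not state it as an intermediate step.
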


	\section{Extremal results}
	\label{sec:extreme}

In this section we prove the main extreme value results Theorems~\ref{thm:extremelevy} and \ref{thm:extremelevyv2} 
assuming either Assumption~\ref{ass:fullassumption1} or Assumption~\ref{ass:fullassumption2}. Throughout this section we work under either of the assumptions, which in particular implies that the minimal Assumption~\ref{ass:minimalassumption} is satisfied.

In proving the extreme value results,
we define a field $(\zvt)$ with distribution which, when $t\to\infty$, approximates the distribution of $(Z_v)$ defined in \eqref{eq:Zdefinition}. For fixed $t>0$ we let $f^{(t)}(\cdot) = f(\cdot) \I{\abs{\cdot} < t}$ and define the field
\begin{equation}\label{eq:Ztdefinition}
	\zvt = \int_\Rd \fft \Lambda_1(\dd u) 
	= \int_{\{\abs{v-u} < t\}} \ff \Lambda_1(\dd u).
\end{equation}
Note that $\zvt$ and $Z_{v'}^{(t)}$ are independent for $\abs{v-v'}>2t$, and furthermore $\zvt \le Z_v$ for any $t$. We will explicitly use this to bound the asymptotic distribution function of $\sup_{C_n} Z_v$ with that of $\sup_{C_n} \zvt$, which is more easily managed due to independence. In proving our main results we will only need $t$ to take a fixed finite value and the value $\infty$. Many of the lemmas below will be formulated in terms of any finite or infinite choice of $t$, with $t=\infty$ corresponding to the field $Z_v^{(\infty)}=Z_v$. As it turns out, the final result will not depend on the fixed finite $t$.

\begin{lemma}\label{lem:Lintegralequivalence}
For all $0<t\le \infty$,
\begin{equation*}
	\int_\Rd \sup_{v\in C_L(0)} \bigl(\fft\bigr)^\alpha \dd u
	\sim L^d 
\end{equation*}
as $L\to\infty$.
\end{lemma}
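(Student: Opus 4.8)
The plan is to show that $\int_{\Rd} \sup_{v\in C_L(0)} \bigl(\fft\bigr)^\alpha \dd u = L^d(1 + o(1))$ as $L\to\infty$, by establishing matching lower and upper bounds. For the lower bound, observe that $\sup_{v\in C_L(0)} \bigl(\fft\bigr)^\alpha \ge \bigl(f^{(t)}(0)\bigr)^\alpha \I{u\in C_L(0)} = \I{u\in C_L(0)}$ whenever $t>0$, since $f(0)=1$ and thus $f^{(t)}(0)=1$; integrating gives $\int_{\Rd} \sup_{v\in C_L(0)} \bigl(\fft\bigr)^\alpha \dd u \ge \abs{C_L(0)} = L^d$. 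Hence $\liminf_{L\to\infty} L^{-d} \int_{\Rd} \sup_{v\in C_L(0)} \bigl(\fft\bigr)^\alpha \dd u \ge 1$, with no error term needed.

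For the upper bound, I would split the integral over $C_L(0)$ and over its complement. On $C_L(0)$ we simply use $\bigl(\fft\bigr)^\alpha \le 1$, contributing at most $L^d$. For the complement, since $f(\cdot)\le g(\abs{\cdot})$ with $g$ decreasing and satisfying \eqref{eq:gammamomentkernel}, and since $\alpha>\gamma$ (so $\int_0^\infty g^\alpha(x) x^{d-1}\dd x<\infty$ as noted after Assumption~\ref{ass:minimalassumption}), we have $\sup_{v\in C_L(0)} \bigl(\fft\bigr)^\alpha \le \sup_{v\in C_L(0)} g^\alpha(\abs{v-u})$. Applying Lemma~\ref{lem:Steinerintegral} (equation \eqref{eq:Steinerintegral1}) with $C = C_L(0)$ and $r=0$, together with the homogeneity $V_j(C_L(0)) = L^j V_j([0,1]^d)$, gives
\[
	\int_{C_L(0)^c} \sup_{v\in C_L(0)} g^\alpha(\abs{v-u}) \dd u
	= \sum_{j=0}^{d-1} \mu_j V_j([0,1]^d) L^j \int_0^\infty g^\alpha(x) x^{d-j-1} \dd x .
\]
Each integral $\int_0^\infty g^\alpha(x) x^{d-j-1}\dd x$ is finite for $j=0,\dots,d-1$ (it is dominated by $\int_0^1 g^\alpha(x)x^{d-1}\dd x + \int_1^\infty g^\alpha(x)x^{d-1}\dd x$ up to constants, both finite), so the complement contributes $O(L^{d-1})$. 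Therefore $\int_{\Rd} \sup_{v\in C_L(0)} \bigl(\fft\bigr)^\alpha \dd u \le L^d + O(L^{d-1})$, giving $\limsup_{L\to\infty} L^{-d}\int_{\Rd} \sup_{v\in C_L(0)} \bigl(\fft\bigr)^\alpha \dd u \le 1$.

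Combining the two bounds yields the claimed asymptotic equivalence, uniformly in the choice of $0<t\le\infty$ (the bounds do not depend on $t$, since $f^{(t)}\le f\le g(\abs{\cdot})$ for all $t$ and $f^{(t)}(0)=1$ for all $t>0$). The only mild subtlety, which I would address briefly, is measurability of the integrand $u\mapsto \sup_{v\in C_L(0)}\bigl(\fft\bigr)^\alpha$: this follows as in the remark after Assumption~\ref{ass:minimalassumption}, since lower semi-continuity of $f$ lets us replace the supremum over $C_L(0)$ by a supremum over a countable dense subset. I do not expect any step here to be a genuine obstacle — the result is essentially a direct consequence of Lemma~\ref{lem:Steinerintegral} and the normalization $f(0)=1$; the main point is just to keep track of which term is the leading $L^d$ and which are lower order.
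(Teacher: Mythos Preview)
Your proposal is correct and follows essentially the same approach as the paper: both arguments observe that the integral over $C_L(0)$ contributes exactly $L^d$ (since $f^{(t)}(0)=1$ and $f\le 1$) and then bound the contribution from $C_L(0)^c$ using Lemma~\ref{lem:Steinerintegral} with $r=0$ together with the homogeneity $V_j(C_L(0))=L^j V_j(C_1(0))$ to conclude it is $O(L^{d-1})=o(L^d)$. Your presentation as separate lower and upper bounds is slightly more explicit, but the substance is identical.
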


\begin{proof}
Since $f\le 1$ with $f(0)=1$ and $\abs{C_L(0)}=L^d$, it is seen that the claim follows once we show that
\[
	\int_{(C_L(0))^c} \sup_{v\in C_L(0)} \bigl(\fft \bigr)^{\alpha} \dd u = o(L^{d})
\]
as $L\to\infty$. Since $f^{(t)}(\cdot)$ is increasing in $t$ and bounded by $g(\abs{\cdot})$, this is satisfied for all $t$ if
\begin{equation}\label{eq:L_g_integral}
	\int_{(C_L(0))^c} \sup_{v\in C_L(0)} \gga \dd u =  o(L^{d})
\end{equation}
as $L\to\infty$. We now show this claim.

Since the cube $C_L(0)\subseteq \Rd$ is a convex body and $g^{\alpha}$ is decreasing, we find by Lemma~\ref{lem:Steinerintegral}, equation \eqref{eq:Steinerintegral1}, and the homogeneity of the intrinsic volumes that there are constants $\mu_j$ independent of $C_L(0)$ such that
\begin{align}
	\nonumber
	\int_{(C_L(0))^c} \sup_{v\in C_L(0)} \gga \dd u &
	= \sum_{j=0}^{d-1} \mu_j V_j(C_L(0)) \int_0^\infty g^{\alpha}(x) x^{d-j-1} \dd x 
	\\ &
	\label{eq:gintegralSteiner}
	= \sum_{j=0}^{d-1} \mu_j V_j(C_1(0)) L^j \int_0^\infty g^{\alpha}(x) x^{d-j-1} \dd x .	
\end{align}
By assumption, $g^{\alpha}(\abs{\cdot})$ is integrable, or equivalently,
\[
	\int_0^\infty g^\alpha(x) x^{d-1} \dd x < \infty ,
\]
and we conclude that the integral in \eqref{eq:gintegralSteiner} is finite for all $j=0,\dots,d-1$. Thus, dividing \eqref{eq:gintegralSteiner} by $L^d$ and letting $L\to\infty$ show that \eqref{eq:L_g_integral} is satisfied. This concludes the proof.
\end{proof}

The lemma below follows by arguments similar to that of Theorem~\ref{thm:tailtheorem0} realizing that $\zvt$ is almost surely bounded on bounded sets. 

\begin{lemma}\label{lem:Zrhoequivalence}
Let $B\subseteq \Rd$ be a fixed bounded set. Then, for all $0<t\le \infty$,
\begin{align*}
	\PP \bigl( \sup_{v\in B} \zvt > x \bigr)
	\sim
	\rho((x,\infty)) \int_{\Rd} \sup_{v\in B} \bigl(\fft \bigr)^{\alpha} \dd u
\end{align*}
as $x\to\infty$. In particular, $\sup_{v\in B} \zvt \in \RV$.
\end{lemma}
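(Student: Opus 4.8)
The plan is to replay the proof of Theorem~\ref{thm:tailtheorem0}, working with the compound Poisson basis $\Lambda_1$ (whose L\'evy measure $\rho_{(1,\infty)}$ is finite and supported on $(1,\infty)$) in place of $\Lambda$, and with the truncated kernel $f^{(t)}(\cdot)=f(\cdot)\I{\abs{\cdot}<t}$ in place of $f$. First I would note that $f^{(t)}$ still meets the kernel requirements of Assumption~\ref{ass:minimalassumption}: it is dominated by $f$, hence bounded by $1$ and by $g(\abs{\cdot})$; it equals $f(0)=1$ at the origin for $t>0$; and it is lower semi-continuous, being the product of the lower semi-continuous $f$ with the indicator of the open ball $\{u:\abs{u}<t\}$. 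Consequently, for every $0<t\le\infty$, the field $\zvt=\int_\Rd\fft\,\Lambda_1(\dd u)$ satisfies the hypotheses of Theorem~\ref{thm:compoundpoissonrepresentation} under either Assumption~\ref{ass:fullassumption1} or Assumption~\ref{ass:fullassumption2} (the L\'evy measure of $\Lambda_1$ is supported on $(0,\infty)$ and trivially has $\int_0^1 x\,\rho_{(1,\infty)}(\dd x)=0<\infty$). By Corollary~\ref{cor:ztilde} and the separability argument used in the proof of Lemma~\ref{lem:compoundcont}, $(\zvt)_{v\in B}$ therefore admits an almost surely bounded version on $B$.

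Next, mirroring the proof of Theorem~\ref{thm:tailtheorem0}, I would set
\[
	H^{(t)}(x)=m\otimes\rho_{(1,\infty)}\bigl(\{(u,z)\in\Rd\times\R\::\:\sup_{v\in B}z\fft>x\}\bigr),\qquad x>0,
\]
and $s_u=(\sup_{v\in B}\fft)^{-1}$, noting that $f^{(t)}\le f\le 1$ gives $s_u\ge(\sup_{v\in B}\ff)^{-1}\ge 1$. Then
\[
	\frac{H^{(t)}(x)}{\rho((x,\infty))}=\int_\Rd\frac{\rho_{(1,\infty)}((s_ux,\infty))}{\rho((x,\infty))}\,\du ,
\]
and since $s_ux\ge 1$ for $x\ge 1$, the numerator equals $\rho((s_ux,\infty))$; by regular variation the integrand converges pointwise to $s_u^{-\alpha}$ (with the convention that this is $0$ at those $u$ for which $\sup_{v\in B}\fft=0$). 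Choosing $\gamma<\alpha$ according to \eqref{eq:gammamomentkernel} and using \eqref{eq:regvarproductbound} together with $s_u\ge 1$, the integrand is dominated for $x\ge x_0$ by $C(\sup_{v\in B}\ff)^\gamma$, which is integrable by the remark following Assumption~\ref{ass:minimalassumption}. Dominated convergence then yields
\[
	\frac{H^{(t)}(x)}{\rho((x,\infty))}\longrightarrow\int_\Rd\sup_{v\in B}\bigl(\fft\bigr)^{\alpha}\,\du=:K_t ,
\]
where $K_t$ is finite (as in Theorem~\ref{thm:tailtheorem0}) and strictly positive, since $B\neq\emptyset$ and the lower semi-continuity of $f$ with $f(0)=1$ forces $\fft>\tfrac12$ on a set of $u$ of positive Lebesgue measure.

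Finally, $1-\min\{H^{(t)},1\}\in\RV$, hence subexponential, so, using the almost sure boundedness of $(\zvt)_{v\in B}$ and choosing a separable version with countable separating dense subset $T\subset B$, \cite[Theorem~3.1]{RosinskiSamorodnitsky1993} gives
\[
	\PP\bigl(\sup_{v\in B}\zvt>x\bigr)\sim H^{(t)}(x)\sim\rho((x,\infty))\int_\Rd\sup_{v\in B}\bigl(\fft\bigr)^{\alpha}\,\du
\]
as $x\to\infty$. Since the right-hand side is $\rho((\cdot,\infty))\in\RA$ multiplied by the positive finite constant $K_t$, this also shows $\sup_{v\in B}\zvt\in\RV$.

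I expect no substantial obstacle, as the argument is essentially that of Theorem~\ref{thm:tailtheorem0}. The only points requiring care are checking that truncation preserves lower semi-continuity and the $g$-domination of the kernel, so that the compound Poisson boundedness machinery of Section~\ref{sec:tail} applies uniformly in $0<t\le\infty$ under either assumption, and keeping track of the fact that the relevant tail functional is built from $\rho_{(1,\infty)}$ rather than $\rho$, which is harmless because the two right tails agree on $(1,\infty)$ and the dominated-convergence step only evaluates them at arguments tending to infinity.
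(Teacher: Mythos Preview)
Your proposal is correct and follows precisely the route the paper indicates: the paper merely states that the lemma ``follows by arguments similar to that of Theorem~\ref{thm:tailtheorem0} realizing that $\zvt$ is almost surely bounded on bounded sets,'' and you have carried out exactly that, carefully verifying that the truncated kernel $f^{(t)}$ retains lower semi-continuity and the $g$-domination so that the compound Poisson representation (Theorem~\ref{thm:compoundpoissonrepresentation} and Corollary~\ref{cor:ztilde}) yields boundedness, and that the tail functional built from $\rho_{(1,\infty)}$ agrees with that of $\rho$ for large arguments. The only minor remark is that the separability-and-boundedness transfer is perhaps more directly modeled on the proof of Lemma~\ref{lem:finitevarbounded} than on Lemma~\ref{lem:compoundcont}, but this is a matter of labeling rather than substance.
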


Since $\rho$ is regularly varying and in particular in the maximum domain of attraction of the Fr\'echet distribution (i.e. it satisfies \eqref{eq:normingconstants}), Lemma~\ref{lem:Zrhoequivalence} in particular implies that
\begin{equation}\label{eq:supZtconvergence}
	\abs{C_n} \PP \bigl( \sup_{v\in B} \zvt > a_n x \bigr)
	\to x^{-\alpha} \rho((1,\infty)) 
	\int_{\Rd} \sup_{v\in B} \bigl(\fft \bigr)^{\alpha} \dd u
\end{equation}
for any fixed set $B\subseteq \Rd$ and any $t>0$.

To ease notation in the remainder of the paper, we define $\taut$ as
\begin{equation*}
	\taut = 
	x^{-\alpha} \rho((1,\infty)) \int_{\Rd} \sup_{v\in C_L(0)} \bigl(\fft \bigr)^{\alpha} \dd u
\end{equation*}
for any $L\in\NN$, $t>0$ and fixed $x>0$. For $t=\infty$ we write $\tau_L=\tau_L^{(\infty)}$. Note that, by Lemma~\ref{lem:Lintegralequivalence}, $\taut$ satisfies
\begin{equation}\label{eq:tauLconvergence}
	\frac{\taut}{L^d} \to x^{-\alpha} \rho((1,\infty))
\end{equation}
as $L\to\infty$.

The lemma below will be used repeatedly in this section. It ensures in particular that the field $(Y_v)_v$ has moments of all orders and thus is of minor importance when determining the extremal behavior of the L\'evy-driven field $(X_v)_v$.

\begin{lemma}\label{lem:expintegrability}
Let the field $(Y_v)_\vR$ be given by \eqref{eq:Ydefinition}. Then
\begin{equation}\label{eq:supYexponential}
	\EE \exp\bigl(\epsilon \sup_{v\in C_L(0)} Y_v \bigr)<\infty
\end{equation} 
for all $\epsilon>0$ and all $L\in\NN$.
\end{lemma}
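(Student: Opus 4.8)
\textbf{Proof plan for Lemma~\ref{lem:expintegrability}.}

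The plan is to show that $\sup_{v\in C_L(0)} Y_v$ has a finite exponential moment by bounding it stochastically by an infinite sum of independent, suitably light-tailed contributions and then controlling the moment generating function of this sum directly. Recall that $Y_v$ is driven by $\Lambda_2$, whose L\'evy measure $\rho_{(-\infty,1]}$ is concentrated on $(-\infty,1]$, so in particular the jumps exceeding~$1$ have been removed; the positive part of the field is therefore built only from jumps of size at most~$1$ (together with a Gaussian part and a drift), while the arbitrarily large negative jumps can only decrease $Y_v$ and are harmless for an upper bound on $\sup_v Y_v$. Concretely, I would first split $Y_v = Y_v^C + Y_v^-$ as in the decomposition preceding Lemma~\ref{lem:fieldcontinuity}; since $Y_v^- \le 0$ pointwise (it is an integral of a positive kernel against a L\'evy basis with only negative jumps and no drift), we have $\sup_{v\in C_L(0)} Y_v \le \sup_{v\in C_L(0)} Y_v^C$, so it suffices to prove \eqref{eq:supYexponential} with $Y_v$ replaced by $Y_v^C$.

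For $Y^C$ the spot variable $\Lambda_{2,C}'$ has a L\'evy measure supported on $[-1,1]$, hence $\Lambda_{2,C}'$ has finite exponential moments of all orders (a compensated sum of bounded jumps plus a Gaussian component plus a drift), and the same holds for $Y_v^C$ itself for any fixed~$v$. The key step is to upgrade this pointwise exponential integrability to an exponential bound on the supremum over the cube. Here I would use a chaining/covering argument together with the continuity estimate already established: by Lemma~\ref{lem:fieldcontinuity} (under Assumption~\ref{ass:fullassumption1}) the field $(Y_v^C)_{v\in C_L(0)}$ has a continuous version, and the moment bounds $\EE\abs{Y_{v+r}^C - Y_v^C}^n \le C_n' \abs{r}^{\zeta n'}$ with $n' \ge n/2$ give, via the standard entropy/Garsia--Rodemich--Rumsey machinery, control on $\EE\exp(\epsilon' \sup_{v}|Y_v^C - Y_{v_0}^C|)$ for a fixed base point $v_0$ and some $\epsilon'>0$; combined with the exponential moment of the single variable $Y_{v_0}^C$ and the Cauchy--Schwarz inequality this yields \eqref{eq:supYexponential} for every $\epsilon>0$ (rescaling $\epsilon$ is free since all moments, hence all exponential moments, are available). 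Alternatively, and perhaps more cleanly, I would invoke the series representation of Theorem~\ref{thm:compoundpoissonrepresentation} applied to the positive-jump part: the supremum over $C_L(0)$ of the compound-Poisson-type field is dominated by $\sum_n G(\Gamma_n, U_n)\sup_{u\in C_L(0)} f(u - U_n)$, an independent sum whose generating function factorizes and can be estimated using that the jumps $G(\Gamma_n, U_n)$ are bounded by~$1$ on the relevant range and that $\int_\Rd \sup_{v\in C_L(0)} f(v-u)\,\dd u < \infty$.

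Under Assumption~\ref{ass:fullassumption2} the argument is even more direct: there is no Gaussian part, $\int_{|y|\le 1}|y|\rho(\dd y)<\infty$, and the compound-Poisson representation of Theorem~\ref{thm:compoundpoissonrepresentation} applies to the positive part of $Y$, so $\sup_{v\in C_L(0)} Y_v^+ \le \sum_n \tilde G_n \sup_{u\in C_L(0)} f(u-U_n)$ where $\tilde G_n \le 1$ and the points $(U_n, \Gamma_n)$ form a Poisson process with intensity controlled by the finite measure $\rho_{(0,1]}$ times $\mu$; the moment generating function of such a sum at $\epsilon$ is $\exp\big(\int (\e^{\epsilon s h(u)} - 1)\,\nu(\dd s, \dd u)\big)$ with $s\le 1$ and $\int h(u)\,\du < \infty$, which is finite for every $\epsilon>0$. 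The main obstacle is the passage from pointwise to uniform exponential integrability over the cube $C_L(0)$: making the entropy estimate rigorous (or, equivalently, justifying that the supremum of the series representation has the same law as $\sup_{v} Y_v^C$ using the separable-version bookkeeping from Lemma~\ref{lem:compoundcont}) is where the real work lies, whereas the exponential decay of each building block is essentially immediate from boundedness of the small jumps.
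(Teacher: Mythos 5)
Your reduction to $(Y^C_v)$ via $Y_v = Y_v^C + Y_v^-$ with $Y_v^-\le 0$ matches the paper exactly, and your intuition that bounded jump sizes are what drive the exponential integrability is the right one. But the core step — passing from pointwise exponential integrability of $Y_v^C$ to exponential integrability of the supremum — is where your two proposed routes fall short of a proof. Route~A (chaining via the polynomial moment bounds $\EE|Y_{v+r}^C - Y_v^C|^n \le C_n' |r|^{\zeta n'}$ and a GRR/entropy argument) does not immediately give exponential moments of the supremum: those estimates yield a.s.\ continuity and control on $L^p$ norms of the supremum for fixed $p$, but upgrading to $\EE\exp(\epsilon\sup|Y_v^C - Y_{v_0}^C|)<\infty$ requires tracking the growth of $C_n'$ in $n$ (essentially an Orlicz-norm chaining bound), which you do not do. The parenthetical ``all moments, hence all exponential moments'' is also not a valid implication in general. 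Moreover Route~A relies on Lemma~\ref{lem:fieldcontinuity}, hence only applies under Assumption~\ref{ass:fullassumption1}. Route~B (the series representation of Theorem~\ref{thm:compoundpoissonrepresentation} plus a direct MGF computation) is sound and would close the argument under Assumption~\ref{ass:fullassumption2}, but it cannot work under Assumption~\ref{ass:fullassumption1}: there the Gaussian part $\theta$ can be nonzero and $\int_0^1 x\,\rho(\dd x)$ may be infinite, so the hypotheses of Theorem~\ref{thm:compoundpoissonrepresentation} are not met for the small positive jumps of $\Lambda_{2,C}$.

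The paper bridges the gap in a cleaner and assumption-agnostic way. It views the countable family $(Y_v^C)_{v\in T}$ (with $T$ the separating dense subset of $C_L(0)$) as a single infinitely divisible random element of $\R^T$, whose L\'evy measure $\nu$ is the image of $m\otimes\rho_{[-1,1]}$ under $H(u,x)=(x f(v-u))_{v\in T}$. Because $|x|\le 1$ and $f\le 1$, $\nu$ is supported in the unit ball of $\ell^\infty(T)$, i.e.\ $\nu(\{z: \sup_T|z_v|>1\})=0$. Combined with the a.s.\ finiteness of $\sup_T|Y_v^C|$ from Lemmas~\ref{lem:fieldcontinuity} and \ref{lem:finitevarbounded}, \cite[Lemma~2.1]{Bravermann1995} then directly yields $\EE\exp(\epsilon\sup_T|Y_v^C|)<\infty$ for every $\epsilon>0$. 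This is precisely the abstract result your chaining sketch is trying to reproduce; invoking it eliminates the case distinction and avoids any quantitative entropy or moment-growth analysis. To fix your proof you would either need to cite such a result about infinitely divisible processes with bounded L\'evy-measure support (Bravermann, or the equivalent results of Marcus--Rosi\'nski), or make the Orlicz-norm chaining argument fully quantitative — and you would still need a separate Gaussian-part argument under Assumption~\ref{ass:fullassumption1}.
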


\begin{proof}
Write $Y_v=Y_v^C + Y_v^-$ as the independent decomposition given in Section~\ref{sec:tail}. Utilizing the fact that $(Y_v^-)$ is a non-positive field, it is clear that \eqref{eq:supYexponential} follows once we show that
\begin{equation}\label{eq:supYCexponential}
	\EE \exp\bigl(\epsilon \sup_{v\in C_L(0)} Y_v^C \bigr)<\infty .
\end{equation} 
for all $\epsilon>0$ and all $L\in\NN$.
Let $T\subset C_L(0)$ be the countable separating dense subset associated to the separable field $(Y_v^C)_{v\in C_L(0)}$.
By considerations as in \cite{RNielsen2016} and \cite{Stehr2020b}, the countable field $(Y_v^C)_{v\in T}$ is infinitely divisible with a characteristic function given as in \cite[Eq.~(1.1)]{Bravermann1995}. Its L\'evy measure $\nu$ defined on $\R^{T}$ is given as follows: Let $H:\Rd \times [-1,1] \to \R^{T}$ be given as
\[
	H(u,x) = (x \ff)_{v\in T} ,
\]
and let $\nu = (m \otimes \rho_{[-1,1]}) \circ H^{-1}$ be the image-measure of $m \otimes \rho_{[-1,1]}$. Here $m$ denotes Lebesgue measure and $\rho_{[-1,1]}$ is the L\'evy measure of the basis $\Lambda_{2,C}$ defining $Y_v^C$.
By Lemmas~\ref{lem:fieldcontinuity} and \ref{lem:finitevarbounded} we find that
\[
	\PP(\sup_{v\in C_L(0)} \abs{Y_v^C} < \infty) 
	= \PP(\sup_{v\in T} \abs{Y_v^C} < \infty)
	=1 ,
\]
and, by construction, $\nu( \{z \in \R^{T}\::\: \sup_{T} \abs{z_v} > 1\}) = 0$.
By an application of \cite[Lemma~2.1]{Bravermann1995} we conclude that $\EE \exp(\epsilon \sup_{v\in C_L(0)} \abs{Y_v^C})< \infty$ for all $\epsilon$, which clearly implies \eqref{eq:supYCexponential}.
\end{proof}

The following theorem, which shows that the field $(Y_v)_\vR$ is ergodic, is exactly Corollary~4 of \cite{StehrRonnNielsen2020}. For a greater exposition of spatial ergodicity, we refer to their Section~5. In the theorem and in the remainder of the paper, we write $J_z$ for $\Jz$ to avoid too heavy notation. Furthermore, we recall the set $\Nk$ initially defined in Theorem~\ref{thm:maingeometrictheorem}\ref{eq:geomthm3}.

\begin{theorem}[{\cite[Corollary~5]{StehrRonnNielsen2020}}]\label{thm:mixingY}
Let the field $(Y_v)_\vR$ be given by \eqref{eq:Ydefinition}, and let $h$ be a function satisfying $\EE \abs{h((Y_u)_{u\in C_L(0)})}<\infty$.
For all $z\in \Nk$ it then holds that
\[
	\frac{1}{\abs{J_z}}\sum_{v\in J_z} h((Y_{u+v})_{u\in C_L(0)})
	\to \EE h((Y_u)_{u\in C_L(0)})
\]
almost surely as $n\to\infty$. The result also holds true if $J_z$ is replaced with a subset of $J_z$ in the shape of a box, which increases in size asymptotically as $J_z$.
\end{theorem}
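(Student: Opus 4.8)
\emph{Proof plan.} The assertion is an instance of the multidimensional pointwise ergodic theorem applied to a discretised, windowed version of $(Y_v)$, so the plan splits into (i) a reduction to ergodicity of a $\ZZ^d$-action and (ii) the verification of that ergodicity. Fix $L\in\NN$ and, for $v\in(L\ZZ)^d$, put $W_v=(Y_{u+v})_{u\in C_L(0)}$; using the separable versions fixed throughout the paper we may view $W_v$ as an element of the Polish space $\R^T$, where $T\subset C_L(0)$ is the countable separating dense set associated with $(Y_u)_{u\in C_L(0)}$. Then $(W_v)_{v\in(L\ZZ)^d}$ is a stationary $\ZZ^d$-indexed field (identifying $(L\ZZ)^d$ with $\ZZ^d$), and the statement says precisely that the averages of $h(W_v)$ over $J_z$ --- and over box-shaped subsets of $J_z$ growing at the same rate --- converge a.s.\ to $\EE h(W_0)=\EE h((Y_u)_{u\in C_L(0)})$. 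By construction each $J_z$ is a cube in $(L\ZZ)^d$ with side-length $\Tt/L$, and $\abs{J_z}=(\Tt)^d/L^d\sim\abs{C_n}/k\to\infty$; hence these are cubes with side tending to infinity, and the $L^1$ pointwise ergodic theorem for $\ZZ^d$-actions along such cubes gives a.s.\ convergence for every $h$ with $\EE\abs{h(W_0)}<\infty$, provided the shift action on $(W_v)$ is ergodic. The claim for growing box-shaped subsets of $J_z$ follows the same way.

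It remains to show that $(W_v)_{v\in(L\ZZ)^d}$ --- equivalently $(Y_v)_{v\in\Rd}$ --- is ergodic; in fact it is mixing. I would obtain this from the independent scattering of $\Lambda_2$ together with the decay of $f$. For $t>0$ set $Y_v^{(t)}=\int_{\{\abs{v-u}<t\}}\ff\,\Lambda_2(\dd u)$. Since $\Lambda_2$ gives independent masses to disjoint sets, $Y_v^{(t)}$ and $Y_{v'}^{(t)}$ are independent whenever $\abs{v-v'}>2t$, so the truncated windowed field $W_v^{(t)}=(Y_{u+v}^{(t)})_{u\in C_L(0)}$ is $M$-dependent as a $\ZZ^d$-field for $M$ depending only on $t$ and $L$; an $M$-dependent stationary field is mixing. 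Now $Y_v^{(t)}\to Y_v$ as $t\to\infty$ (in $L^2$, and one can upgrade to a.s.\ using the moment control in Lemma~\ref{lem:expintegrability}), with the tail $\int_{\{\abs{v-u}\ge t\}}\ff\,\Lambda_2(\dd u)$ bounded uniformly over $u\in T$ via the decreasing integrable envelope $g$ from Assumption~\ref{ass:minimalassumption}; hence $W^{(t)}\to W$ strongly enough that mixing is inherited by the limit, so $(W_v)$ is mixing and a fortiori ergodic. (Alternatively one may invoke the classical characterisation of mixing for stationary infinitely divisible fields without Gaussian part in terms of the L\'evy measure, which for a moving average reduces to $\int (f\wedge 1)\,\dd u<\infty$, again guaranteed by Assumption~\ref{ass:minimalassumption}; this is essentially the route of \cite{StehrRonnNielsen2020}.)

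The main obstacle is the ergodicity step, specifically the transfer of mixing from the $M$-dependent truncated fields to the limit in the infinite-dimensional state space: one must show that the tail contributions $\int_{\{\abs{v-u}\ge t\}}\ff\,\Lambda_2(\dd u)$ become negligible uniformly over $u\in T$ and remain so after applying the possibly unbounded functional $h$, which is exactly where the integrable envelope $g$ and the exponential moment bound of Lemma~\ref{lem:expintegrability} are used. Once ergodicity is in place, the a.s.\ convergence of the averages over $J_z$ and its growing box-shaped subsets is a routine appeal to the multidimensional ergodic theorem and needs nothing beyond $\EE\abs{h(W_0)}<\infty$.
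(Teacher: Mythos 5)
The paper itself does not prove this statement — it is cited verbatim as Corollary~5 of \cite{StehrRonnNielsen2020} (with a pointer to their Section~5 on spatial ergodicity), so there is no internal proof to compare against. Your reconstruction has the right high-level shape: pass to the stationary $\ZZ^d$-indexed window field $W_v=(Y_{u+v})_{u\in C_L(0)}$, invoke the multidimensional pointwise ergodic theorem along the cubes $J_z$, and reduce everything to ergodicity of $(Y_v)$. Once ergodicity is secured the rest is indeed routine; the one detail worth a word is that for $z\neq 0$ the cubes $J_z=\bigl(z\,\Tt+[0,\Tt)^d\bigr)\cap(L\ZZ)^d$ drift off to infinity, so one should observe that for fixed $z$ this is a regular (Tempelman) averaging sequence so the a.s.\ ergodic theorem still applies.

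The genuine gap is the ergodicity step, which you flag as the obstacle but do not close. First, $M$-dependence of the truncation $W^{(t)}$ plus a.s.\ convergence $W^{(t)}\to W$ does not by itself give mixing of $W$: mixing is not preserved under pointwise limits of stationary fields without a quantitative estimate, and the phrase ``strongly enough that mixing is inherited by the limit'' is precisely the content that requires a proof rather than an assertion. Second, the alternative you offer --- the Lévy-measure characterization of mixing for ID fields \emph{without Gaussian part} --- cannot be invoked as stated, since $\Lambda_2$ retains the Gaussian component $\theta$ of the basis (see the decomposition at the start of Section~\ref{sec:tail}); one would need a further split of $Y$ into its Gaussian part (mixing because $f\in L^2$, which follows from $f\le 1$ and \eqref{eq:gammamomentkernel}) and its purely non-Gaussian part (mixing by a Maruyama/Rosi\'nski--\.Zak type criterion), and then use that an independent sum of mixing stationary fields is mixing. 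Finally, the claim of $L^2$ convergence $Y_v^{(t)}\to Y_v$ is not justified: $\rho$ restricted to $(-\infty,-1)$ is only assumed to have a finite $\gamma$-moment with $\gamma\in(0,1]$, so $Y_v$ may fail to be square-integrable, and Lemma~\ref{lem:expintegrability} only controls $\bigl(\sup_v Y_v\bigr)_+$, not the left tail.
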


From now on we let $x>0$ be fixed, and we let the sequence $(a_n)$ be the norming constants of $\rho$ satisfying \eqref{eq:normingconstants}.

\begin{lemma}\label{thm:partialy}
Let $(\zvt)_v$ and $(Y_v)_v$ be given by \eqref{eq:Ztdefinition} and \eqref{eq:Ydefinition}, respectively. Then, for almost all realizations $(y_v)_v$ of $(Y_v)_v$, it holds for all $z\in \Nk$ and for all $0<t\le \infty$ that
\begin{equation}\label{eq:partialy1}
	\frac{\abs{C_n}}{\abs{J_z}} \sum_{v\in J_z}
	\PP\bigl( \sup_{u\in C_L(v)} (Z_u^{(t)} + y_u) > a_nx \bigr) \to 
	\taut 
\end{equation}
as $n\to\infty$.
The result also holds true if $J_z$ is replaced with a subset of $J_z$ in the shape of a box, which increases in size asymptotically as $J_z$.
\end{lemma}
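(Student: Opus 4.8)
The goal is a quenched law of large numbers: for almost every realization $(y_v)_v$ of $(Y_v)_v$, the sum of conditional exceedance probabilities over the grid $J_z$, rescaled by $\abs{C_n}/\abs{J_z}$, converges to $\taut$. My approach is to recognize the left-hand side of \eqref{eq:partialy1} as an ergodic average of a suitable function of the $Y$-field along the lattice $J_z$, and then apply Theorem~\ref{thm:mixingY}.

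First I would fix $t\in(0,\infty]$, $L\in\NN$, the level $x>0$ with its norming constants $(a_n)$, and $z\in\Nk$. For a deterministic field $(y_u)_u$ and $n\in\NN$ define the scalar
\[
	h_n\bigl((y_u)_{u\in C_L(0)}\bigr)
	= \abs{C_n}\,\PP\bigl( \sup_{u\in C_L(0)} (Z_u^{(t)} + y_{u}) > a_n x \bigr),
\]
which by stationarity of $(Z_u^{(t)})$ equals $\abs{C_n}\,\PP( \sup_{u\in C_L(v)} (Z_u^{(t)} + y_{u-v}) > a_n x )$ after the translation $u\mapsto u+v$. Thus the left-hand side of \eqref{eq:partialy1} is exactly $\frac{1}{\abs{J_z}}\sum_{v\in J_z} h_n\bigl((Y_{u+v})_{u\in C_L(0)}\bigr)$. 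The natural candidate for the limit is obtained by replacing $h_n$ by the pointwise limit $h_\infty\bigl((y_u)_u\bigr) := \lim_n h_n\bigl((y_u)_u\bigr)$; by Lemma~\ref{lem:Zrhoequivalence} applied to the bounded set $B = C_L(0)$ together with the regular variation of $\rho$ (the convergence \eqref{eq:supZtconvergence} with a bounded deterministic shift absorbed as in the proof of Theorem~\ref{thm:tailtheorem}), one gets $h_\infty\bigl((y_u)_u\bigr) = x^{-\alpha}\rho((1,\infty)) \int_{\Rd} \sup_{v\in C_L(0)} (\fft)^{\alpha}\,\du$, which is the constant $\taut$ and does not depend on $(y_u)_u$. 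So $\EE\, h_\infty\bigl((Y_u)_{u\in C_L(0)}\bigr) = \taut$, and Theorem~\ref{thm:mixingY} (applicable since the constant function $h_\infty$ is trivially integrable) would give the claim if we were averaging $h_\infty$ rather than $h_n$.

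The main obstacle — and the real content of the lemma — is therefore the interchange of the $n\to\infty$ limit with the average: we are averaging a \emph{moving} target $h_n$, not a fixed function, so Theorem~\ref{thm:mixingY} does not apply directly. To handle this I would use a sandwiching/uniform-control argument. On one hand, a lower bound: drop the $y_u$ term where it hurts, or more carefully use that for any $\delta>0$ and all $n$ large, $h_n((y_u)_u) \ge \abs{C_n}\PP(\sup_{u\in C_L(0)} Z^{(t)}_u > a_n x + \sup_u|y_u|)$, and by Lemma~\ref{lem:regvaryingbound} applied to the regularly varying distribution of $\sup_{u\in C_L(0)} Z^{(t)}_u$ (Lemma~\ref{lem:Zrhoequivalence}) this is at least $\tau_L^{(t)}(1-\delta)$ times a factor of the form $C(K + (\sup_u|y_u|)_+^{\beta})^{-1}$ — which is not quite uniform. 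The cleaner route is to bound $h_n$ above and below by integrable functions of $(y_u)_u$ that are \emph{independent of $n$}: by Lemma~\ref{lem:regvaryingbound} with exponent $\beta$ slightly above $\alpha$,
\[
	\abs{C_n}\PP\bigl(\sup_{u\in C_L(0)}Z^{(t)}_u > a_nx - \textstyle\sup_u y_u\bigr)
	\le \abs{C_n}\PP\bigl(\sup_{u\in C_L(0)}Z^{(t)}_u > a_nx\bigr)\, C\bigl(K + (\sup_u y_u)_+^{\beta}\bigr),
\]
and the prefactor $\abs{C_n}\PP(\sup Z^{(t)}_u > a_n x)$ converges to $\tau_L^{(t)}$ by \eqref{eq:supZtconvergence}, hence is bounded in $n$; so $h_n((y_u)_u) \le \Phi\bigl((y_u)_u\bigr) := C'(K + (\sup_u y_u)_+^{\beta})$ for all $n$ large, with $\EE\,\Phi((Y_u)_{u\in C_L(0)}) < \infty$ by Lemma~\ref{lem:expintegrability} (exponential moments give all polynomial moments). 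With this $n$-independent integrable envelope in hand, I would combine the pointwise convergence $h_n\to h_\infty$ with the ergodic average of the envelope: split $\frac{1}{\abs{J_z}}\sum_{v\in J_z} h_n(\cdot) = \frac{1}{\abs{J_z}}\sum_{v\in J_z} h_\infty(\cdot) + \frac{1}{\abs{J_z}}\sum_{v\in J_z}(h_n - h_\infty)(\cdot)$, control the first term by Theorem~\ref{thm:mixingY}, and control the error term by a further decomposition: for fixed $K_0$, $|h_n - h_\infty|\le \varepsilon$ on the (open, by lower semicontinuity considerations) set $\{\sup_u|y_u|\le K_0, n\ge n_0(\varepsilon,K_0)\}$ by Dini-type reasoning / local uniform convergence in \eqref{eq:supZtconvergence}, while on $\{\sup_u|y_u|>K_0\}$ we bound $|h_n-h_\infty|\le \Phi + \taut$ and use that $\frac{1}{\abs{J_z}}\sum_{v\in J_z}(\Phi+\taut)((Y_{u+v})_u)\I{\sup_u|Y_{u+v}|>K_0} \to \EE[(\Phi+\taut)(Y)\I{\sup|Y|>K_0}]$ by Theorem~\ref{thm:mixingY}, which is small for $K_0$ large by dominated convergence. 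Taking $\varepsilon\downarrow 0$ and $K_0\uparrow\infty$ after $n\to\infty$ yields the result. The final sentence (replacing $J_z$ by a box-shaped subset growing asymptotically as $J_z$) is immediate since every invocation of Theorem~\ref{thm:mixingY} above already permits this substitution, and $\abs{J_z}$ differs from the size of the sub-box by a factor tending to $1$.
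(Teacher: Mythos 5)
Your proposal is correct and follows essentially the same route as the paper's proof: both identify the ``moving target'' issue, both invoke Lemma~\ref{lem:Zrhoequivalence} for the tail of $\sup Z^{(t)}$, Lemma~\ref{lem:regvaryingbound} for an $n$-independent integrable envelope, Lemma~\ref{lem:expintegrability} for the requisite moments of $\sup Y$, and Theorem~\ref{thm:mixingY} for the spatial ergodic average, together with a truncation level ($K_0$ in your notation, $N$ in the paper's) sent to infinity after $n\to\infty$. The paper packages the argument as a direct sandwich $\FF_{L,t}(a_nx-\bary)\le\PP(\sup(Z^{(t)}+y)>a_nx)\le\FF_{L,t}(a_nx-\supy)$ plus a one-line split of the ergodic sum inside the $\limsup$/$\liminf$, rather than your $h_n=h_\infty+(h_n-h_\infty)$ decomposition, but the underlying estimates are identical.
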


\begin{proof}
We only show the convergence \eqref{eq:partialy1} as the expression for $J_z$ replaced by an asymptotically size-equivalent box follows identically. 

Throughout this proof we fix $\beta > \alpha$ such that
\begin{equation}\label{eq:partialproofYgamma}
	\EE \bigl(\sup_{v\in C_L(0)} Y_v \bigr)_+^\beta < \infty ,
\end{equation}
which is possible due to Lemma~\ref{lem:expintegrability}.

Let $F_{L,t}$ denote the distribution of $\sup_{u\in C_L(v)} Z_u^{(t)}$ for $t>0$, and note that $F_{L,t}\in\RV$ by Lemma~\ref{lem:Zrhoequivalence}. By \eqref{eq:supZtconvergence},
\begin{equation*}\label{eq:Ztconvergencexn0}
	\abs{C_n} \FF_{L,t}(a_nx) 
	\to \taut
\end{equation*}
as $n\to\infty$, where $\FF_{L,t}$ is the tail of $F_{L,t}$. Since in particular $F_{L,t}\in \mathcal{S}$, the convergence
\begin{equation*}
	\frac{\FF_{L,t}(a_nx - y)}{\FF_{L,t}(a_nx)} \to 1
\end{equation*}
as $n\to\infty$ is uniform for all $\abs{y}\le N$, for all $N\in\NN$; see e.g. \cite[Definition~2.1]{Pakes2004}. Hence, with $\supy = \sup_{C_L(v)} y_u$, we find by an application of Lemma~\ref{lem:regvaryingbound} and Theorem~\ref{thm:mixingY}, utilizing \eqref{eq:partialproofYgamma}, that
\begin{align*}
	\MoveEqLeft
	\frac{\abs{C_n}}{\abs{J_z}} 
	\sum_{v\in J_z} \FF_{L,t}(a_nx - \supy)\\ &
	\le \frac{\abs{C_n} \FF_{L,t}(a_nx)}{\abs{J_z}}
	\sum_{v\in J_z} \Bigl(
	\frac{\FF_{L,t}(a_nx - \supy)}{\FF_{L,t}(a_nx)} \I{\abs{\supy}\le N} \\ &
	\hspace{35mm}
	+ C \bigl(K+(\supy)_+^\beta \bigr) \I{\abs{\supy}> N} \Bigr) \\ &
	\to \taut \EE \Bigl(\I{\abs{\sup_{C_L(0)} Y_v} \le N}
	+ C \bigl(K+(\sup_{C_L(0)} Y_v)_+^\beta \bigr) 
	\I{\abs{\sup_{C_L(0)} Y_v} > N} \Bigr)
\end{align*}
as $n\to\infty$, almost surely. Letting $N\to\infty$ shows by monotone convergence that
\[
	\limsup_{n\to\infty} \frac{\abs{C_n}}{\abs{J_z}} 
	\sum_{v\in J_z} \FF_{L,t}(a_nx - \supy) \le \taut
\]
almost surely. By similar arguments we find that also
\[
	\liminf_{n\to\infty} \frac{\abs{C_n}}{\abs{J_z}} 
	\sum_{v\in J_z} \FF_{L,t}(a_nx - \bary) \ge \taut ,
\]
where we used the notation $\bary = \inf_{C_L(v)} y_u$. As
\[
	\FF_{L,t}(a_nx - \bary)
	\le 
	\PP\bigl( \sup_{u\in C_L(v)} (Z_u^{(t)} + y_u) > a_nx \bigr)
	\le 
	\FF_{L,t}(a_nx - \supy) ,
\]
the convergence \eqref{eq:partialy1} follows.
\end{proof}

We say that two subsets $A,B$ of $\Zd$ or $\Rd$ are $r$-separated if $\mathrm{dist}(A,B) = \inf\{\abs{a-b}\::\:a\in A,b\in B\}\ge r$ and there are two disjoint sets $A',B'\subseteq \Rd$, both of which are connected, such that $A \subseteq A'$ and $B \subseteq B'$. In particular in the coming lemmas, we consider subsets of $(L\ZZ)^d$ and require that these are $\gamma_n$-separated, where, even though we do not explicitly state it, we assume that $\gamma_n\ge L$. 

Before proceeding, we define the last piece of convenient notation. For a set $A\subseteq (L\ZZ)^d$ and a deterministic field $(y_v)_v$ we let
\[
	\myt(A) = \max_{z\in A} \sup_{u\in C_L(z)} (Z^{(t)}_u + y_u) ,
\]
where we simply write $\myL$ for $\myL^{(\infty)}$, that is, when considering the field $(Z_v)$.

\begin{lemma}\label{lem:mixingZ}
Let $(Z_v)_v$ and $(Y_v)_v$ be given by \eqref{eq:Zdefinition} and \eqref{eq:Ydefinition}, respectively. There is a sequence $\gamma_n=o(\sqrt[d]{\abs{C_n}})$ such that for all $\gamma_n$-separated
sets $A,B\subseteq \KL\subseteq (L\ZZ)^d$, where at least one is a box, it holds that
\begin{equation*}
	\abs[\big]{
	\PP (\myL(A \cup B) \le a_nx ) -
	\PP (\myL(A) \le a_nx )\PP (\myL(B) \le a_nx )
	}
	\le \alphaL,
\end{equation*}
where $\alphaL\to 0$ as $n\to\infty$ for all $L\in\NN$ and almost all realizations $(y_v)_v$ of $(Y_v)_v$. 
\end{lemma}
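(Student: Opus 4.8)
The plan is to reduce this mixing statement to an independence property of the underlying L\'evy basis $\Lambda_1$ combined with a bound on the long-range correlation coming from the tails of the kernel $f$. Recall that $\zvt = \int_{\{\abs{v-u}<t\}} \ff \Lambda_1(\dd u)$ depends only on the restriction of $\Lambda_1$ to the $t$-neighbourhood of $v$, so for the \emph{truncated} field the increments over $r$-separated regions are genuinely independent once $r>2t$. For the full field $(Z_v)$ the strategy is to couple $Z_v$ with $\zvt$ for a well-chosen truncation level $t=t_n\to\infty$ growing slowly relative to $\sqrt[d]{\abs{C_n}}$, control the error $Z_v-\zvt$ uniformly over $C_n$ using the integrability of $g^\alpha$, and then invoke the exact independence of the truncated pieces.

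Concretely I would proceed as follows. First, fix a realization $(y_v)_v$ in the almost-sure set on which all earlier convergence statements hold. Choose a truncation radius $t_n\to\infty$ with $t_n=o(\sqrt[d]{\abs{C_n}})$, and set $\gamma_n = 4t_n$ (also $\ge L$), which is $o(\sqrt[d]{\abs{C_n}})$ as required. For $\gamma_n$-separated sets $A,B\subseteq\KL$, the fields $(\zvt[t_n])_{v\in A\oplus C_L(0)}$ and $(\zvt[t_n])_{v\in B\oplus C_L(0)}$ are independent, because the underlying sets $\{\abs{v-u}<t_n\}$ for $v$ in the two families are disjoint once the separation exceeds $2t_n + L + \mathrm{diam}(C_L(0))$, which holds for $n$ large. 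Hence, writing $\myt[t_n]$ for the truncated analogue of $\myL$,
\[
	\PP\bigl(\myt[t_n](A\cup B)\le a_nx\bigr)
	= \PP\bigl(\myt[t_n](A)\le a_nx\bigr)\,\PP\bigl(\myt[t_n](B)\le a_nx\bigr).
\]
So the entire task is to show that replacing $\myt[t_n]$ by $\myL$ changes each probability by an amount tending to $0$, uniformly over the relevant events; since there are three probabilities and products of quantities in $[0,1]$ are $1$-Lipschitz in each coordinate, it suffices to bound $\abs{\PP(\myL(D)\le a_nx) - \PP(\myt[t_n](D)\le a_nx)}$ for $D\in\{A,B,A\cup B\}$, and these $D$ are contained in $\KL$, a box of side $O(\abs{C_n}^{1/d})$.

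For this last bound I would use $0\le Z_v - \zvt[t_n] = \int_{\{\abs{v-u}\ge t_n\}} \ff\,\Lambda_1(\dd u)$ and split $\{Z_v - \zvt[t_n] > \delta_n \text{ for some } v \in \KL\oplus C_L(0)\}$ off as a negligible event. One estimates $\PP(\sup_{v\in\KL} (Z_v-\zvt[t_n]) > \delta_n)$ via a union bound over the $O(\abs{C_n})$ cubes $C_L(z)$, Lemma~\ref{lem:Zrhoequivalence} applied to the field with kernel $f\,\I{\abs{\cdot}\ge t_n}$, and Lemma~\ref{lem:Steinerintegral} (equation~\eqref{eq:Steinerintegral1}) for a cube $C_L(0)$: the resulting tail is asymptotically $\rho((\delta_n,\infty))$ times $\int_{\Rd}\sup_{v\in C_L(0)}(f(v-u)\I{\abs{v-u}\ge t_n})^\alpha\,\dd u$, and the latter integral is dominated by $\sum_{j=0}^{d-1}\mu_j V_j(C_L(0))\int_{t_n - \mathrm{diam}(C_L(0))}^\infty g^\alpha(x) x^{d-j-1}\dd x \to 0$ as $t_n\to\infty$. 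Picking $\delta_n\to 0$ slowly enough that $\abs{C_n}\rho((\delta_n,\infty))\cdot(\text{tail integral}) \to 0$ — possible since the tail integral $\to 0$ and $\abs{C_n}\rho((a_nx,\infty))$ is bounded, so one only needs $\rho((\delta_n,\infty))/\rho((a_n,\infty))$ not to blow up too fast, which a regular-variation argument as in Lemma~\ref{lem:regvarloworder} secures — makes this event negligible. Off that event, $\myt[t_n](D) \le \myL(D) \le \myt[t_n](D) + \delta_n$, so $\{\myL(D)\le a_nx\}$ and $\{\myt[t_n](D)\le a_nx\}$ differ only inside $\{a_nx - \delta_n < \myt[t_n](D) \le a_nx\}$ (up to the negligible event), whose probability is at most $\PP(\myL(D)\le a_nx) - \PP(\myL(D)\le a_nx - \delta_n) + o(1)$, and by Lemma~\ref{thm:partialy} together with the convergence of $\PP(\myL(D)\le a_nx)$ to a continuous-in-$x$ limit (a Fr\'echet-type expression — this uses that $\delta_n\to0$ and the limit function is continuous), this difference vanishes. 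Collecting the three contributions gives $\alphaL\to 0$.

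The main obstacle I anticipate is the bookkeeping in the second-to-last step: one must simultaneously choose $t_n$ large enough that the truncation-error tail integral $\int_{t_n}^\infty g^\alpha(x)x^{d-j-1}\dd x$ is small, $\delta_n\to 0$ slow enough that $\abs{C_n}\,\PP(\sup_{\KL}(Z_v-\zvt[t_n])>\delta_n)\to 0$, yet $\delta_n$ small enough (and the limit of $\PP(\myL(D)\le a_nx)$ continuous enough) that the "boundary layer" probability $\PP(a_nx-\delta_n < \myt[t_n](D)\le a_nx)$ also vanishes, all while keeping $\gamma_n = 4t_n = o(\sqrt[d]{\abs{C_n}})$. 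These are not in conflict — the tail integral can be driven to zero at \emph{any} prescribed slow rate by the choice of $t_n$, independently of $\delta_n$ — but verifying a consistent choice and the uniformity in the "at least one is a box" hypothesis (which one only needs so that Lemma~\ref{thm:partialy} and the earlier subexponential-uniformity statements apply to $D$) requires care. I would also note that the argument is insensitive to whether we track $(Z_v)$ or, in the $t=\infty$ case stated in the lemma, the genuinely untruncated field, since the whole point is that $t=\infty$ is handled by approximation with finite $t_n$.
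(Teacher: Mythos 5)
Your high-level idea — remove far-away mass from the integrand, exploit exact independence of the near parts over separated regions, and absorb the error in a vanishing boundary layer — is the right shape, and it is also the shape of the paper's argument. But two of the specific quantitative steps as written do not go through, and a third is circular.

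\textbf{The error estimate.} You want to show $\PP\bigl(\sup_{v\in\KL}(Z_v-Z_v^{(t_n)})>\delta_n\bigr)\to 0$ with $\delta_n\to 0$, via a union bound over the $O(\abs{C_n})$ cubes together with Lemma~\ref{lem:Zrhoequivalence}. Lemma~\ref{lem:Zrhoequivalence} is an asymptotic as $x\to\infty$ for a \emph{fixed} kernel and \emph{fixed} bounded set; here the kernel $f\,\I{\abs{\cdot}\ge t_n}$ moves with $n$, the set $\KL$ grows, and the threshold $\delta_n$ shrinks, so the asymptotic does not apply. If instead one replaces it by a Markov--type moment bound with exponent $\gamma$ (as one must), the union bound loses a factor of order $\abs{C_n}$, and a rough power counting with $g(x)\asymp x^{-p}$, $t_n\asymp\abs{C_n}^{1/d-\eta}$, $\delta_n\asymp\abs{C_n}^{-\kappa}$ shows you need $p>2d/\gamma$ rather than the assumed $p>d/\gamma$; the approach breaks in the regime the paper covers. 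Requiring $\delta_n\to 0$ is also unnecessary: for the boundary-layer step all one needs is $\delta_n=o(a_n)$, and the paper deliberately chooses $\epsilon_n=\abs{C_n}^{1/\beta}\to\infty$ (with $\beta>\alpha$ so $\epsilon_n=o(a_n)$), which makes the Markov bound comfortably small.

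\textbf{The boundary layer.} You control $\PP\bigl(a_nx-\delta_n<M^{(t_n)}_{L,y}(D)\le a_nx\bigr)$ by citing ``the convergence of $\PP(\myL(D)\le a_nx)$ to a continuous-in-$x$ Fr\'echet-type limit'' for $D$ as large as $A\cup B$. That convergence is the statement of Theorem~\ref{thm:extremelevy}, which is proved \emph{using} this mixing lemma; the argument is circular. Lemma~\ref{thm:partialy} alone does not give it, since it concerns a single block $J_z$.

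\textbf{How the paper avoids both problems.} The paper decomposes $\Lambda_1$ relative to the set $\Aa=\bigcup_{v\in A}C_L(v)$ (not a ball around each point): $Z_v^A$ integrates over $\Aa_n=\Aa\oplus B(\gamma_n/2)$ and $\overline Z_v^A$ over $\Aa_n^c$. Because $B(v,\gamma_n/2)\subseteq\Aa_n$ for $v\in\Aa$, this keeps strictly more mass in the near part than your pointwise truncation at radius $t_n=\gamma_n/4$, so the error field is smaller. The supremum $\sup_{v\in\Aa}\overline Z_v^A$ is then bounded \emph{directly} by $\sum_m g(r_m)\Lambda_1(E_m)$ over unit-volume shells, and a $\gamma$-moment bound (subadditivity of $x\mapsto x^\gamma$ for $\gamma\le 1$) plus Steiner's formula for $V_j(\mathcal K_{n,L})$ gives $\PP((S^A_n)^c)\le\epsilon_n^{-\gamma}\EE((\Lambda_1')^\gamma)\sum_j\tilde\mu_j\abs{C_n}^{j/d}\int_{\gamma_n/2}^\infty g^\gamma(x)x^{d-j-1}\dd x$, uniformly in $A$; this has no extra $\abs{C_n}$ from a union bound and only the $\abs{C_n}^{(d-1)/d}$ that Steiner's theorem produces. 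Finally, the boundary layer is bounded in the mixing inequality \eqref{eq:mixinginequality} by the \emph{difference of sums} $\sum_{v\in\KL}\bigl(\PP(\myL(\{v\})>a_nx-\epsilon_n)-\PP(\myL(\{v\})>a_nx+\epsilon_n)\bigr)$, and Lemma~\ref{thm:partialy} applied block by block shows both sums converge to the same limit; no appeal to the as-yet-unproved Fr\'echet convergence of $\myL(D)$ is needed. If you replace your truncation by the set-relative one, your union bound by the $L^\gamma$ bound on the supremum, and your continuity appeal by the per-vertex sum comparison, your outline becomes the paper's proof.
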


\begin{proof}
We choose $\gamma_n/2=\abs{C_n}^{1/d-\delta}$ for some $0<\delta<\min\{\frac{1}{d\alpha},\frac{1}{2d}\}$. In particular, $\gamma_n= o(\sqrt[d]{\abs{C_n}})$ as $n\to\infty$.
Let the sets $A$ and $B$ be given as in the lemma and define
\begin{equation*}
	\Aa = \bigcup_{v\in A}C_L(v)
	\qquad\text{and}\qquad
	\Bb = \bigcup_{v\in B}C_L(v)
	\qquad\text{and}\qquad
	\Kk = \bigcup_{v\in \KL} C_L(v) .
\end{equation*}
Throughout the proof, we assume that $A$ is a box, and thus $\Aa$ is a continuous box and in particular a convex body.
Recall that $B(r)$ denotes the closed ball in $\Rd$ of radius $r\ge 0$ with center $0\in\Rd$. Define $\Aa_{n}=\Aa\oplus \ball{\gamma_n/2}$ and note that $\Bb\subseteq \Bb_n=\bigl( \Aa\oplus \ball{\gamma_n} \bigr)^c=\bigl( \Aa_n\oplus \ball{\gamma_n/2} \bigr)^c$, since $\Aa$ and $\Bb$ are $\gamma_n$-separated (strictly speaking they are only $(\gamma_n-L)$-separated, however, this is equivalent as $n\to\infty$). 

For all $v\in\Aa$ let
\begin{equation*}
	Z_v^A = \int_{\Aa_{n}} \ff \Lambda_1(\dd u),
	\qquad\text{and}\qquad
	\overline Z_v^A =\int_{\Aa_{n}^c} \ff \Lambda_1(\dd u) .
\end{equation*}
Similarly, for all $v\in\Bb$,
\begin{equation*}
	Z_v^B = \int_{\Aa_{n}^c} \ff \Lambda_1(\dd u),
	\qquad\text{and}\qquad
	\overline Z_v^B =\int_{\Aa_{n}} \ff \Lambda_1(\dd u) .
\end{equation*}
Since $\Lambda_1$ is a positive measure all $Z_v^A$, $\overline{Z}_v^A$, $Z_v^B$ and $\overline{Z}_v^B$ are non-negative, and we have
\begin{equation*}
	\sup_{v\in\Aa}\overline{Z}^A_v
	\le \int_{\Aa_{n}^c}\sup_{v\in \Aa} \ff \Lambda_1(\dd u)
\end{equation*}
and
\begin{equation*}
	\sup_{v\in\Bb}\overline{Z}^B_v
	\le \int_{\Aa_{n}}\sup_{v\in \Bb}\ff \Lambda_1(\dd u)\,.
\end{equation*}
If $\alpha< 1$, we choose $\gamma<\alpha<\beta$ such that $\frac{\gamma}{\beta}>\frac{d-1}{d}$ and such that the upper bound $g$ has $\gamma$-moment (in accordance with Assumption~\ref{ass:minimalassumption}). If $\alpha\geq 1$, we let $\gamma=1$ and choose $\beta>\alpha$ such that $\frac{1}{\beta}>d\delta$ (recall that $\frac{1}{\alpha}>d\delta$). Let $\epsilon_n=\abs{C_n}^{1/\beta}$, which clearly satisfies $\epsilon_n\to\infty$, and define the events $S_n^A$ and $S_n^B$ by
\[
	S^A_n= 
	\bigl( \sup_{v\in\Aa}\overline{Z}^A_v \le \epsilon_n \bigr)
	\qquad\text{and}\qquad 
	S^B_n= 
	\bigl( \sup_{v\in\Bb}\overline{Z}^B_v \le \epsilon_n \bigr) .
\]
First, we show that the probability $\PP((S^A_n)^c)$ has an $A$-independent upper bound tending to $0$ as $n\to\infty$. For a fixed $n$, we define the sequence $(r_m)_{m\geq 1}$ by $r_1=\gamma_n/2$ and recursively for $m>1$ by
\[
	\abs[\big]{(\Aa\oplus B(r_{m+1}))\setminus (\Aa\oplus B(r_m))}=1.
\]
For each $m\ge 1$ define the sets
\[
	E_m=(\Aa\oplus B(r_{m+1}))\setminus (\Aa\oplus B(r_m))
\]
of unit volume,
and note that their union equals $\Aa_n^c$ by construction. Utilizing that $\Lambda_1$ is a positive measure, that $g$ is decreasing and that $g(\abs{\cdot})$ constitutes an upper bound to $f(\cdot)$, we find
\begin{equation*}
\int_{\Aa_{n}^c}\sup_{v\in \Aa} \ff \Lambda_1(\dd u) 	\le \sum_{m=1}^\infty g(r_m)\Lambda_1(E_m).
\end{equation*}
Let $\Lambda_1'$ be the spot variable of the L\'evy basis $\Lambda_1$, that is, a random variable equivalent in distribution to $\Lambda_1(S)$ with $S\subseteq \Rd$ satisfying $\abs{S}=1$. Using Markov's inequality we now find that
\begin{equation}\label{eq:SAsetinequality}
	\PP((S^A_n)^c)  
	\le \frac{1}{\epsilon_n^\gamma}\EE\Bigl(\sum_{m=1}^\infty g(r_m)^\gamma \Lambda_1(E_m)^\gamma\Bigr)	
	= \frac{1}{\epsilon_n^\gamma}\EE((\Lambda_1')^\gamma)\sum_{m=1}^\infty g(r_m)^\gamma .
\end{equation}
Note that the underlying L\'evy measure $\rho_1$ has finite $\gamma$-moment (see e.g. \cite[Proposition~A3.8]{Embrecths1997}), and thus also $\Lambda_1'$ has finite $\gamma$-moment (cf. \cite[Theorem~25.3]{Sato1999}). Showing that $\PP((S^A_n)^c)\to 0$ uniformly in $A$ therefore amounts to showing that the sum above is of order $o(\epsilon_n^\gamma)$ uniformly in $A$. Clearly, it is enough to consider the sum starting at index $m=2$, which is more convenient. Using the construction of the sets $E_m$ and the fact that $g$ is decreasing, Lemma~\ref{lem:Steinerintegral} and the construction of $\Kk$ implies the existence of $n$- and $A$-independent constants $\mu_j$ and $\tilde{\mu}_j$ such that 
\begin{equation}\label{eq:ggammaintegrability}
\begin{aligned}
	\sum_{m=2}^\infty g(r_m)^\gamma  &
	= \sum_{m=2}^\infty \int_{E_{m-1}} g(r_m)^\gamma \dd u\\ &
	\le \sum_{m=1}^\infty \int_{E_{m}} \sup_{v\in \Aa} g(\abs{v-u})^\gamma \dd u\\ &
	= \int_{\Aa_n^c} \sup_{v\in \Aa} g(\abs{v-u})^\gamma \dd u \\ &
	= \sum_{j=0}^{d-1} \mu_j V_j(\Aa) \int_{\gamma_n/2}^\infty g(x)^\gamma x^{d-j-1} \dd x \\ &
	\le \sum_{j=0}^{d-1} \mu_j V_j(\Kk) \int_{\gamma_n/2}^\infty g(x)^\gamma x^{d-j-1} \dd x \\ &	
	\le \sum_{j=0}^{d-1}\tilde{\mu}_j \abs{C_n}^{j/d}\int_{\gamma_n/2}^\infty g(x)^\gamma x^{d-j-1} \dd x .
\end{aligned}
\end{equation}
For each $j=0,\dots,d-1$ we have
\begin{equation}\label{eq:twointegrals}
\begin{aligned}
	\MoveEqLeft
	\abs{C_n}^{j/d} \int_{\gamma_n/2}^{\infty} g(x)^\gamma x^{d-j-1}\,\dd x \\ &
	\le 
	\abs{C_n}^{(d-1)/d}\int_{\gamma_n/2}^{\infty} g(x)^\gamma \,\dd x
	+\int_{\gamma_n/2}^{\infty} g(x)^\gamma x^{d-1}\,\dd x.
\end{aligned}
\end{equation}
The second integral on the right hand side clearly tends to $0$, and in particular it is of order $o(\epsilon_n^\gamma)$ as $n\to\infty$. In the case $\alpha<1$, the integrability of $g^\gamma$ and the construction of $\epsilon_n$, gives that the first term on the right hand side is of order $o(\epsilon_n^\gamma)$ as $n\to\infty$. In the case $\alpha\ge 1$, where $\gamma=1$, we have for the first term that
\begin{align*}
	\frac{1}{\epsilon_n}\abs{C_n}^{(d-1)/d}\int_{\gamma_n/2}^{\infty} g(x) \,\dd x &
	\le \abs{C_n}^{d\delta-1/\beta}\Bigl(\frac{\gamma_n}{2}\Bigr)^{d-1}\int_{\gamma_n/2}^{\infty} g(x) \,\dd x\\ &
	\le \abs{C_n}^{d\delta-1/\beta}\int_{\gamma_n/2}^{\infty} g(x)x^{d-1} \,\dd x
\end{align*}
which tends to 0, since we have chosen $\beta,\delta$ with $1/\beta>d\delta$. 
Thus, combining \eqref{eq:SAsetinequality}--\eqref{eq:twointegrals}, we have shown for all $\alpha>0$ that $\PP((S_n^A)^c)$ has an upper bound independent of $A$ that tends to 0 as $n\to\infty$. 

The procedure for evaluating $\PP((S_n^B)^c)$ is similar: Let $n$ be fixed and define for each $r>0$ the set
\[
	\Aa_n^{-r} = \Aa_n \setminus (\partial \Aa_n \oplus B(r)) ,
\]
recalling that $\partial \Aa_n$ denotes the boundary of $\Aa_n$. Hence, $\Aa_n^{-r}$ is the set $\Aa_n$ without the strip of thickness $r$ closest to its boundary.
Note that $\Aa_n^{-r}$ is decreasing in $r$ and that it will be empty for $r$ large enough. Define for all $m\ge 0$ the sequence $s_m=r_m - \gamma_n/2$ with $(r_m)$ as above, and define for $m\ge 1$ the sets
\[
	F_m= \Aa_n^{-s_{m}} \setminus \Aa_n^{-s_{m+1}}.
\]
As a consequence of the proof of Lemma~\ref{lem:Steinerintegral}, equation \eqref{eq:Steinerintegral2} (see \cite{StehrRonnNielsen2020}), we obtain that
\begin{align*}
	\abs{F_m} &
	= \abs{\Aa_n^{-s_m}} - \abs{\Aa_n^{-s_{m+1}}} \\ &
	\le \abs{\Aa_n \oplus B(s_{m+1})} - \abs{\Aa_n \oplus B(s_{m})} \\ &
	= \abs{\Aa \oplus B(r_{m+1})} - \abs{\Aa \oplus B(r_{m})} \\ &
	= \abs{E_m} ,
\end{align*}
with $E_m$ defined as above.
Since $\Lambda_1$ is non-negative, we e.g. have that $\EE(\Lambda_1(F_m)^\gamma)\le \EE(\Lambda_1(E_m)^\gamma)$, and then, similarly to \eqref{eq:SAsetinequality}, we find
\[
	\PP((S^B_n)^c)  
	\le \frac{1}{\epsilon_n^\gamma} \EE((\Lambda_1')^\gamma)\sum_{m=1}^\infty g(r_m)^\gamma.
\]
We can proceed as above to find an upper bound that tends to 0 and is independent of $B$.

Identically to arguments applied in the proof of \cite[Lemma~6]{StehrRonnNielsen2020} we can derive that
\begin{equation}\label{eq:mixinginequality}
\begin{aligned}
&	\abs[\big]{
	\PP (\myL(A \cup B) \le a_nx ) -
	\PP (\myL(A) \le a_nx )\PP (\myL(B) \le a_nx )
	}\\
&\le 2\Bigl(\sum_{v\in \KL}
		\PP(M_{L,y}(\{v\}) > a_nx-\epsilon_n)-
	\sum_{v\in \KL}
		\PP(M_{L,y}(\{v\}) > a_nx+\epsilon_n)\Bigr)\\
		&\phantom{=}+2 \Bigl( \PP \bigl((S^A_n)^c \bigr)
	+  \PP \bigl( (S^B_n)^c\bigr) \Bigr) .
\end{aligned}
\end{equation}
We have already seen that the second term above tends to 0 as $n\to\infty$. 
Since $a_n=\abs{C_n}^{1/\alpha}S(\abs{C_n})$ for a slowly varying function $S$ (cf. \cite[p. 131]{Embrecths1997}), we have $\epsilon_n=o(a_n)$. 
From this it is not difficult to see that $\rho((a_n x, \infty)) \sim \rho((a_n x \pm \epsilon_n, \infty))$ as $n\to\infty$.
Therefore, realizing that $\abs{C_n}\PP(M_{L,y}(\{v\})>a_nx + \epsilon_n)$ and $\abs{C_n}\PP(M_{L,y}(\{v\})>a_nx-\epsilon_n)$ have the same limit for fixed $v$, the considerations that led to \eqref{eq:partialy1} also show that the two sums in the first term of \eqref{eq:mixinginequality} have the same limit as $n\to\infty$.
This completes the proof.
\end{proof}

In the remainder of the paper, we consider the sequence $\gamma_n$ given in Lemma~\ref{lem:mixingZ}. The following generalization follows easily by induction.

\begin{lemma}\label{lem:mixinggeneral}
Let $(Z_v)_v$ and $(Y_v)_v$ be given by \eqref{eq:Zdefinition} and \eqref{eq:Ydefinition}, respectively, and let $(y_v)_v$ be a realization of $(Y_v)_v$. Let for $r\in\NN$ the boxes $A_1,\dots, A_r\subseteq \KL$ be pairwise $\gamma_n$-separated boxes. Then
\begin{equation*}
	\abs[\Big]{
	\PP \Bigl(\bigcap_{i=1}^r \{\myL(A_i) \le a_nx \} \Bigr) -
	\prod_{i=1}^r \PP (\myL(A_i) \le a_nx )	}
	\le (r-1) \alphaL .
\end{equation*}
\end{lemma}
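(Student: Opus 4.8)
The plan is to argue by induction on $r$, peeling off one box at a time and invoking Lemma~\ref{lem:mixingZ} at each step. For $r=1$ the left-hand side is $0=(1-1)\alphaL$, and $r=2$ is precisely Lemma~\ref{lem:mixingZ}, since both $A_1$ and $A_2$ are boxes (so in particular at least one is). So assume the bound holds for any family of $r-1$ pairwise $\gamma_n$-separated boxes contained in $\KL$, and let $A_1,\dots,A_r\subseteq\KL$ be pairwise $\gamma_n$-separated boxes.

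First I would, after relabelling the $A_i$ (which is harmless, the claimed bound being symmetric in $A_1,\dots,A_r$), arrange that $A_r$ and $A:=\bigcup_{i=1}^{r-1}A_i$ are $\gamma_n$-separated in the sense of the definition. The distance requirement is immediate, since $\mathrm{dist}(A,A_r)=\min_{i<r}\mathrm{dist}(A_i,A_r)\ge\gamma_n$; the only thing to check is the existence of disjoint connected supersets. For $d\ge 2$ any choice of $A_r$ works: take $A_r$ itself and the complement of $A_r\oplus\ball{\gamma_n/2}$, which contains $A$ and is connected. For $d=1$ the $\gamma_n$-separated boxes are pairwise disjoint intervals, hence linearly ordered, and one takes $A_r$ to be an extreme one, separating $A$ and $A_r$ by two complementary rays. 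Since $A_r$ is a box and $A$, $A_r$, $A\cup A_r$ are all contained in $\KL$, Lemma~\ref{lem:mixingZ} applies to the pair $(A,A_r)$.

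As $\myL(\,\cdot\,)$ is a maximum, one has $\{\myL(A\cup A_r)\le a_nx\}=\bigcap_{i=1}^r\{\myL(A_i)\le a_nx\}$ and $\{\myL(A)\le a_nx\}=\bigcap_{i=1}^{r-1}\{\myL(A_i)\le a_nx\}$. Abbreviating $P=\PP\bigl(\bigcap_{i=1}^r\{\myL(A_i)\le a_nx\}\bigr)$, $Q=\PP\bigl(\bigcap_{i=1}^{r-1}\{\myL(A_i)\le a_nx\}\bigr)$ and $q=\PP(\myL(A_r)\le a_nx)$, Lemma~\ref{lem:mixingZ} gives $\abs{P-Qq}\le\alphaL$, while the induction hypothesis gives $\abs{Q-\prod_{i=1}^{r-1}\PP(\myL(A_i)\le a_nx)}\le(r-2)\alphaL$. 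Multiplying the latter estimate by $q\in[0,1]$ and applying the triangle inequality yields $\abs{P-\prod_{i=1}^r\PP(\myL(A_i)\le a_nx)}\le\alphaL+(r-2)\alphaL=(r-1)\alphaL$, which is the assertion. The only genuinely non-routine point is the separation step in the second paragraph, namely verifying that after peeling off a suitable box the remaining union is still $\gamma_n$-separated from it; the rest is bookkeeping with the triangle inequality and the fact that $q\le 1$.
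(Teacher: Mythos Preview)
The proposal is correct and follows exactly the induction argument the paper alludes to (the paper simply states that the lemma ``follows easily by induction''). Your careful verification of the $\gamma_n$-separation condition for the pair $(A,A_r)$, including the connectedness of the supersets in the cases $d\ge 2$ and $d=1$, fills in the one technical detail the paper leaves implicit.
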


By construction we have $\gamma_n<\Tt$ for $n$ sufficiently large relative to fixed $k$ and $L$. For each $z\in \Nk$ we divide $J_z=\Jz \subseteq (L\ZZ)^d$ into two disjoint subsets, $H_z=H_z^{n,k,L}$ and $H_z^*=H_z^{*,n,k,L}$, where  
\begin{equation*}
	H_z ^{n,k,L}  
	= \bigl\{
		u \in J_z \::\:
		z_j \Tt \le u_j < (z_j+1) \Tt - \gamma_n,\:
		\text{for all } j=1,\dots,d
	\bigr\}
\end{equation*}
is a box constructed such that it is $\gamma_n$-separated from $H_{u}^{n,k,L}$ for all $v\neq u\in \Nk$.
From now on we simply write $H_z$ and $H_z^*=J_z \setminus H_z$ to ease notation, but bare in mind the dependence on $n,k,L$.
In the proof of Lemma~\ref{lem:supBnbound} we explicitly use the fact that $H_z^*$ equals the union of a set of overlapping boxes: Defining $B_{z,1}^*,\dots,B_{z,d}^*$ by
\begin{equation*}
	B_{z,j}^* = 
	\bigl\{
		u \in J_z \::\:
		(z_j +1)\Tt - \gamma_n\le u_j < (z_j+1) \Tt
	\bigr\} 
\end{equation*}
for all $j=1,\dots,d$, it is easily seen that their union equals $H_z^*$. Note that they are of size $\abs{B_{z,j}^*} \sim \Tt^{d-1} \gamma_n/L^d$ as $n\to\infty$.

\begin{lemma}\label{lem:maxinequalitypqlevy}
Let $(Z_v)_v$ and $(Y_v)_v$ be given by \eqref{eq:Zdefinition} and \eqref{eq:Ydefinition}, respectively, and let $(y_v)_v$ be a realization of $(Y_v)_v$. Then it holds that
\begin{equation}\label{eq:maxinequality1plevy}
\begin{aligned}
	\MoveEqLeft	
	\abs[\Big]{
	\PP ( \myL(\dnm) \le a_nx ) -
	\prod_{z\in \Pp} \PP ( \myL(J_z) \le a_nx )	} \\&
	\le  
	2 \sum_{z\in \Pp}\PP ( \myL(H_z) \le a_nx < \myL(H_z^*) ) + (\pp-1) \alphaL ,
\end{aligned}
\end{equation}
and similarly
\begin{equation}\label{eq:maxinequality1qlevy}
\begin{aligned}
	\MoveEqLeft
	\abs[\Big]{
	\PP ( \myL(\dnp) \le a_nx ) -
	\prod_{z\in \Qq} \PP ( \myL(J_z) \le a_nx )	} \\&
	\le  
	2 \sum_{z\in \Qq} \PP ( \myL(H_z) \le a_nx < \myL(H_z^*) ) + (\qq-1) \alphaL.
\end{aligned}
\end{equation}
\end{lemma}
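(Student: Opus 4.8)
The plan is to prove \eqref{eq:maxinequality1plevy}; the inequality \eqref{eq:maxinequality1qlevy} follows by the identical argument with $\Pp$ replaced by $\Qq$. Recall that $\dnm = \bigcup_{z\in\Pp} J_z$, so $\{\myL(\dnm) \le a_nx\} = \bigcap_{z\in\Pp} \{\myL(J_z) \le a_nx\}$. The first step is to replace, inside the probability, each ``full'' index set $J_z$ by its ``interior'' box $H_z$. Since $H_z \subseteq J_z$ we have $\{\myL(J_z)\le a_nx\} \subseteq \{\myL(H_z)\le a_nx\}$, and the symmetric difference between the two events is controlled by the event $\{\myL(H_z)\le a_nx < \myL(H_z^*)\}$ (using $J_z = H_z \cup H_z^*$, so $\myL(J_z) = \max\{\myL(H_z),\myL(H_z^*)\}$). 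Applying this coordinate-wise and a union bound gives
\[
	\abs[\Big]{
	\PP(\myL(\dnm)\le a_nx) - \PP\Bigl(\bigcap_{z\in\Pp}\{\myL(H_z)\le a_nx\}\Bigr)}
	\le \sum_{z\in\Pp}\PP(\myL(H_z)\le a_nx < \myL(H_z^*)).
\]

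The second step is to decouple the events $\{\myL(H_z)\le a_nx\}$, $z\in\Pp$. By construction the boxes $H_z$, $z\in\Nk \supseteq \Pp$, are pairwise $\gamma_n$-separated, so Lemma~\ref{lem:mixinggeneral} applies with $r=\pp$ and yields
\[
	\abs[\Big]{\PP\Bigl(\bigcap_{z\in\Pp}\{\myL(H_z)\le a_nx\}\Bigr) - \prod_{z\in\Pp}\PP(\myL(H_z)\le a_nx)}
	\le (\pp-1)\alphaL.
\]
The third step is to restore the full $J_z$ inside each factor: using again $\{\myL(J_z)\le a_nx\}\subseteq\{\myL(H_z)\le a_nx\}$ together with a telescoping estimate for products of probabilities in $[0,1]$ (if $0\le a_z\le b_z\le 1$ then $\abs{\prod b_z - \prod a_z}\le \sum(b_z-a_z)$), we get
\[
	\abs[\Big]{\prod_{z\in\Pp}\PP(\myL(H_z)\le a_nx) - \prod_{z\in\Pp}\PP(\myL(J_z)\le a_nx)}
	\le \sum_{z\in\Pp}\PP(\myL(H_z)\le a_nx < \myL(H_z^*)),
\]
where each summand is exactly $\PP(\myL(H_z)\le a_nx) - \PP(\myL(J_z)\le a_nx)$. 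Combining the three displays via the triangle inequality gives \eqref{eq:maxinequality1plevy} with the factor $2$ in front of the sum.

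I do not expect a serious obstacle here: the argument is essentially a bookkeeping exercise combining the already-established mixing estimate (Lemma~\ref{lem:mixinggeneral}) with two elementary product/union-bound manipulations. The one point requiring a little care is verifying that Lemma~\ref{lem:mixinggeneral} is genuinely applicable, i.e. that each $H_z$ is a box contained in $\KL$ (true since $J_z\subseteq K_{n,k,L}\subseteq\KL$ by Theorem~\ref{thm:maingeometrictheorem}\ref{eq:geomthm3}--\ref{eq:geomthm4} and $H_z\subseteq J_z$) and that the $H_z$ are pairwise $\gamma_n$-separated for $n$ large relative to $k$ (which holds by the defining strip of width $\gamma_n$ removed from each $J_z$, provided $\gamma_n < \Tt$, valid for large $n$). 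Once these hypotheses are checked, the rest is immediate.
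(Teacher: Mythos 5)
Your proposal is correct and matches the paper's own proof essentially step for step: replace each $J_z$ by the inner box $H_z$ inside the intersection (with a union bound over the events $\{\myL(H_z)\le a_nx < \myL(H_z^*)\}$), apply Lemma~\ref{lem:mixinggeneral} to the $\gamma_n$-separated boxes $H_z$, telescope back from $\prod \PP(\myL(H_z)\le a_nx)$ to $\prod \PP(\myL(J_z)\le a_nx)$, and combine by the triangle inequality to get the factor $2$. The side conditions you flag (that each $H_z$ is a box in $\KL$ and that the $H_z$ are pairwise $\gamma_n$-separated for $n$ large relative to $k$) are also exactly the ones the paper implicitly relies on.
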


\begin{proof}
As $H_z$ is a subset of $J_z$, we find that
\begin{align*}
	0 & \le 
	\PP\Bigl( \bigcap_{z\in \Pp}\{\myL(H_z)\le a_nx\} \Bigr)-\PP\bigl(\myL(\dnm)\le a_nx\bigr) \\&
	\le \PP\Bigl( \bigcup_{z\in \Pp} \{\myL(H_z)\le a_nx <  \myL(H_z^*)\} \Bigr) \\&
	\le \sum_{z\in \Pp}\PP \bigl( \myL(H_z) \le a_nx < \myL(H_z^*) \bigr).
\end{align*}
By construction of $H_z$, an application of Lemma~\ref{lem:mixinggeneral} shows that 
\begin{equation*}
	\abs[\Big]{
	\PP \Bigl(\bigcap_{z\in \Pp} \{\myL(H_z) \le a_nx \} \Bigr) -
	\prod_{z\in \Pp} \PP \bigl( \myL(H_z) \le a_nx \bigr)	}
	\le (\pp-1) \alphaL ,
\end{equation*}
which, combined with the fact that
\begin{align*}
	0 & 
	\le \prod_{z\in \Pp} \PP \bigl( \myL(H_z) \le a_nx \bigr) - 
	\prod_{z\in \Pp} \PP \bigl( \myL(J_z) \le a_nx \bigr) \\&
	\le 
	\sum_{z\in \Pp} \Bigl(
	\PP \bigl( \myL(H_z) \le a_nx \bigr)
	-
	\PP \bigl( \myL(J_z) \le a_nx \bigr)
	\Bigr)\\&
	= 
	 \sum_{z\in \Pp}\PP \bigl( \myL(H_z) \le a_nx < \myL(H_z^*) \bigr),
\end{align*}
concludes \eqref{eq:maxinequality1plevy}. The claim \eqref{eq:maxinequality1qlevy} follows similarly.
\end{proof}

\begin{lemma}\label{lem:supBnbound}
Let $(Z_v)_v$ and $(Y_v)_v$ be given by \eqref{eq:Zdefinition} and \eqref{eq:Ydefinition}, respectively. Then, for almost all realizations $(y_v)_v$ of $(Y_v)_v$, the following is satisfied
\begin{equation}\label{eq:Rboundcont}
	\begin{aligned}
    \Big(\liminf_{n\to\infty} \min_{z\in \Nk}\PP( \myL(J_z) \le a_nx)\Big)^{\tilde q_{k,L}}&
	\le
	\liminf_{n\to\infty} \PP \bigl( \sup_{v\in C_n} (Z_v + y_v) \le a_nx \bigr) \\&
	\le 
	\limsup_{n\to\infty} \PP \bigl( \sup_{v\in C_n} (Z_v + y_v) \le a_nx \bigr)\\&
	\le \Big(\limsup_{n\to\infty}\max_{z\in \Nk} \PP( \myL(J_z) \le a_nx)\Big)^{\tilde p_{k,L}} ,
	\end{aligned}
\end{equation}
where $\tilde p_{k,L} = \liminf_{n}\pp$ and $\tilde q_{k,L} = \limsup_{n}\qq$.
\end{lemma}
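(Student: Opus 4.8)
The plan is to sandwich $\sup_{v\in C_n}(Z_v+y_v)$ between the maxima over the inner and outer grid approximations $\dnm$ and $\dnp$ from \eqref{eq:DnBnInequality}. Since the cubes $C_L(z)$ cover $C_n$ from inside and outside, we have the pointwise inequality
\[
	\myL(\dnm) \le \sup_{v\in C_n}(Z_v+y_v) \le \myL(\dnp),
\]
so that
\[
	\PP\bigl(\myL(\dnp)\le a_nx\bigr)
	\le \PP\bigl(\sup_{v\in C_n}(Z_v+y_v)\le a_nx\bigr)
	\le \PP\bigl(\myL(\dnm)\le a_nx\bigr).
\]
The first step is therefore to reduce both outer bounds to products over the boxes $J_z$, $z\in\Pp$ or $z\in\Qq$, using Lemma~\ref{lem:maxinequalitypqlevy}. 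This leaves two error terms: the edge-correction sum $\sum_z \PP(\myL(H_z)\le a_nx < \myL(H_z^*))$ and the mixing error $(\pp-1)\alphaL$ or $(\qq-1)\alphaL$.

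The second step is to control these errors as $n\to\infty$. For the mixing error, Lemma~\ref{lem:mixingZ} gives $\alphaL\to 0$, and since $\pp,\qq$ are bounded for fixed $k,L$ (by Theorem~\ref{thm:maingeometrictheorem}, they are asymptotically $k/L^d$), the products $(\pp-1)\alphaL$ and $(\qq-1)\alphaL$ vanish. For the edge term, one bounds $\PP(\myL(H_z)\le a_nx < \myL(H_z^*)) \le \PP(\myL(H_z^*) > a_nx)$, and $H_z^*$ is a union of $d$ boxes $B_{z,j}^*$ of size $\sim \Tt^{d-1}\gamma_n/L^d = o(\Tt^d/L^d) = o(\abs{J_z})$ since $\gamma_n = o(\Tt)$. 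Applying Lemma~\ref{thm:partialy} — which holds with $J_z$ replaced by an asymptotically size-equivalent box, but here the boxes $B_{z,j}^*$ are of \emph{lower order} — one gets $\abs{C_n}\PP(\myL(B_{z,j}^*)>a_nx) \to 0$ (the limit is proportional to $\lim \abs{B_{z,j}^*}\abs{C_n}^{-1}\cdot(\text{const})$, which is $0$); summing over the bounded index set $\Nk$ shows $\sum_z\PP(\myL(H_z)\le a_nx<\myL(H_z^*))\to 0$ almost surely in $(y_v)_v$.

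The third step is to pass from the products to the claimed powers. Having shown
\[
	\PP\bigl(\sup_{v\in C_n}(Z_v+y_v)\le a_nx\bigr)
	\ge \prod_{z\in\Qq}\PP(\myL(J_z)\le a_nx) - o(1)
	\ge \Bigl(\min_{z\in\Nk}\PP(\myL(J_z)\le a_nx)\Bigr)^{\qq} - o(1),
\]
and similarly from above with $\Pp$ and $\max$, one takes $\liminf_n$ and $\limsup_n$. Since each factor lies in $[0,1]$ and $\qq$ is integer-valued with $\limsup_n\qq = \tilde q_{k,L}$ (and $\liminf_n\pp = \tilde p_{k,L}$), the liminf of the lower bound is at least $\bigl(\liminf_n\min_{z\in\Nk}\PP(\myL(J_z)\le a_nx)\bigr)^{\tilde q_{k,L}}$ — here one uses that $\min_{z\in\Nk}\PP(\cdots)\le 1$ so raising to a larger exponent only decreases the bound, justifying the replacement of $\qq$ by its $\limsup$; symmetrically for the upper bound one replaces $\pp$ by $\liminf_n\pp$, using that the $\limsup$ of the max is $\le 1$. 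I expect the main obstacle to be the edge term: one must verify carefully that $H_z^*$ (as a union of the lower-order boxes $B_{z,j}^*$) contributes negligibly, which requires applying the conditional convergence of Lemma~\ref{thm:partialy} not to $J_z$ itself but to these thin strips and checking that their relative volume tending to $0$ forces the corresponding limit to be $0$ — together with the subtle point that all of this holds simultaneously for almost every realization $(y_v)_v$, which is fine since $\Nk$ and the finite collection of relevant strips are countable.
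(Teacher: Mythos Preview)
Your proposal follows the paper's route almost exactly: sandwich via \eqref{eq:DnBnInequality}, reduce to products by Lemma~\ref{lem:maxinequalitypqlevy}, kill the mixing error with Lemma~\ref{lem:mixingZ} (using that $\pp,\qq$ are bounded for fixed $k,L$), and pass to powers using that all factors lie in $[0,1]$. Your treatment of the final $\liminf/\limsup$ step with the integer exponents is correct.

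The one place that needs repair is the edge term. You propose to apply Lemma~\ref{thm:partialy} directly to the thin strips $B_{z,j}^*$ and argue that the limit is zero because their relative volume vanishes. But Lemma~\ref{thm:partialy} is only stated for boxes that grow \emph{asymptotically as} $J_z$; the strips $B_{z,j}^*$ are of lower order, so the lemma does not apply to them as written, and your heuristic ``limit proportional to $\abs{B_{z,j}^*}/\abs{C_n}$'' is not justified by the lemma. The paper's fix is exactly the differencing you gesture at but do not carry out: write
\[
\sum_{v\in B_{z,j}^*}\PP\bigl(\sup_{u\in C_L(v)}(Z_u+y_u)>a_nx\bigr)
=\sum_{v\in J_z}\PP(\cdots)-\sum_{v\in J_z\setminus B_{z,j}^*}\PP(\cdots),
\]
and observe that $J_z\setminus B_{z,j}^*$ \emph{is} a box increasing asymptotically as $J_z$ (since $\abs{B_{z,j}^*}=o(\abs{J_z})$). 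Now Lemma~\ref{thm:partialy} applies to both sums on the right; each, after normalization by $\abs{C_n}/\abs{J_z}\sim\abs{C_n}/\abs{J_z\setminus B_{z,j}^*}$, converges to $\tau_L$, so the unnormalized sums both tend to $\tau_L/k$ and their difference tends to $0$. Summing over the finitely many $j$ and $z\in\Nk$ gives the vanishing of the edge term, almost surely in $(y_v)_v$.
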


\begin{proof}
Let $R_{n,k,L}^{p}\ge 0$ and $R_{n,k,L}^{q}\ge 0$ denote the upper bounds in \eqref{eq:maxinequality1plevy} and \eqref{eq:maxinequality1qlevy}, respectively.
For all $z\in \Nk$, we have
\begin{align*}
	\PP ( \myL(H_z) \le a_nx < \myL(H_z^*)) &
	\le \PP(\myL(H_z^*) > a_nx) \\&
	\le \sum_{j=1}^d \PP(\myL(B_{z,j}^*) > a_nx) \\ &
	\le \sum_{j=1}^d \sum_{v\in B_{z,j}^*}\PP \bigl( \sup_{u\in C_L(v)}(Z_u + y_u) > a_nx \bigr),
\end{align*}
where we recall that $\abs{B_{z,j}^*}\sim\Tt^{d-1} \gamma_n/L^d$ as $n\to\infty$. 
In particular, $\abs{B_{z,j}^*} = o(\abs{J_z})$ as $n\to\infty$ for all $j=1,\dots,d$, and hence $J_z\setminus B_{z,j}^*$ is a box, which increases in size asymptotically as $J_z$.
Since the limit appearing in Lemma~\ref{thm:partialy} is finite and $\abs{J_z}, \abs{J_z\setminus B_{z,j}^*}$ and $\abs{C_n}$ are asymptotically of the same order, we conclude by Lemma~\ref{thm:partialy} that
\begin{align*}
	\PP ( \myL(H_z) \le a_nx < \myL(H_z^*)) &
	\le  
	\sum_{j=1}^d 
		\sum_{v\in J_z}
		\PP \bigl( \sup_{u\in C_L(v)}(Z_u + y_u) > a_nx \bigr) 
		\\ & \qquad
		- \sum_{j=1}^d  	
		\sum_{v\in J_z\setminus B_{z,j}^*}
		\PP \bigl( \sup_{u\in C_L(v)}(Z_u + y_u) > a_nx \bigr)
			\\ &
	\to 0 
\end{align*}
almost surely for all $z\in \Nk$.
By Lemma~\ref{lem:mixingZ} we thus obtain that
\begin{equation*}
	\lim_{n\to\infty} R_{n,k,L}^p 
	= \lim_{n\to\infty} R_{n,k,L}^q
	= 0 
\end{equation*}
almost surely.
Turning to \eqref{eq:DnBnInequality} and using Lemma~\ref{lem:maxinequalitypqlevy} show that
\begin{equation*}
	\begin{aligned}
	\liminf_{n\to\infty}\prod_{z\in \Qq} \PP( \myL(J_z) \le a_nx)&
	\le
	\liminf_{n\to\infty} \PP \bigl( \sup_{v\in C_n} (Z_v + y_v) \le a_nx \bigr) \\&
	\le 
	\limsup_{n\to\infty} \PP \bigl( \sup_{v\in C_n} (Z_v + y_v) \le a_nx \bigr)\\&
	\le \limsup_{n\to\infty} \prod_{z\in \Pp} \PP( \myL(J_z) \le a_nx) .
	\end{aligned}
\end{equation*}
As all factors in the products are probabilities and thus lie in the interval $[0,1]$, it is easily seen that \eqref{eq:Rboundcont} is satisfied.
\end{proof}

In the following lemma, the summation over $\{v<v'\in J_z\}$ indicates the double sum of points in $v\in J_z$ and subsequent points $v'\in J_z$ falling strictly after $v$ under some underlying enumeration. This notation will be used in the remainder of the paper.

\begin{lemma}\label{lem:anticlustering}
Let $(\zvt)_v$ and $(Y_v)_v$ be given by \eqref{eq:Ztdefinition} and \eqref{eq:Ydefinition}, respectively. For all $0<t<\infty$ there is a sequence of functions $g_L$ satisfying 
\begin{equation}\label{eq:anticlusteringfunction}
		\limsup_{k\to\infty} k \, g_L(k) = o(L^d)
\end{equation}
as $L\to\infty$ such that
\begin{equation}\label{eq:anticlusteringZy}
	\limsup_{n\to\infty} \sum_{v<v' \in J_z} 
	\PP \bigl( 
	\sup_{u\in C_L(v)} (Z_u^{(t)} + y_u)> a_nx,
	\sup_{u\in C_L(v')} (Z_u^{(t)} + y_u)> a_nx
	\bigr)
	\le g_L(k)
\end{equation}
for all $z\in\Nk$ and almost all realizations $(y_v)_v$ of $(Y_v)_v$.
\end{lemma}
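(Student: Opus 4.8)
Throughout, fix $t\in(0,\infty)$ and, for $v\in(L\ZZ)^d$, write $D_v=C_L(v)\oplus B(t)$ and
\[
	A_v=\bigl\{\,\sup_{u\in C_L(v)}(Z^{(t)}_u+y_u)>a_nx\,\bigr\},
\]
so that the left-hand side of \eqref{eq:anticlusteringZy} is $\sum_{v<v'\in J_z}\PP(A_v\cap A_{v'})$. Since for $u\in C_L(v)$ the variable $Z^{(t)}_u$ depends on $\Lambda_1$ only through its restriction to $\{w:\abs{u-w}<t\}\subseteq D_v$, the families $(Z^{(t)}_u)_{u\in C_L(v)}$ and $(Z^{(t)}_u)_{u\in C_L(v')}$ are independent whenever $D_v\cap D_{v'}=\emptyset$, which holds as soon as $L>2t$ and $\norm{v-v'}_\infty\ge 2L$. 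The plan is to split $\sum_{v<v'\in J_z}$ according to whether the pair is \emph{far} ($\norm{v-v'}_\infty\ge 2L$) or \emph{close} ($\norm{v-v'}_\infty=L$; every $v$ has at most $3^d-1$ such neighbours), and to estimate the two contributions by different arguments. For $L\le 2t$ one simply lets $g_L$ be the crude bound coming from $\PP(A_v\cap A_{v'})\le\PP(A_v)$, which is harmless since \eqref{eq:anticlusteringfunction} only concerns $L\to\infty$. For the far pairs, independence gives $\PP(A_v\cap A_{v'})=\PP(A_v)\PP(A_{v'})$, so that part of the sum is at most $\tfrac12\bigl(\sum_{v\in J_z}\PP(A_v)\bigr)^2$; by Lemma~\ref{thm:partialy} together with $\abs{J_z}/\abs{C_n}\to 1/k$ we have $\sum_{v\in J_z}\PP(A_v)\to\taut/k$, so its $\limsup_n$ is at most $\tfrac12(\taut/k)^2$.

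For the close pairs the crude bound $\PP(A_v\cap A_{v'})\le\PP(A_v)$ is too weak — it would leave a term of exact order $L^d$ in $k\,g_L(k)$ — so I would instead decompose $\Lambda_1$ independently around the overlap $D_v\cap D_{v'}$. For $u\in C_L(v)$ write $Z^{(t)}_u=I^{(v)}_u+O^{(v)}_u$, where $I^{(v)}_u$ integrates the kernel against $\Lambda_1$ restricted to $D_v\cap D_{v'}$ and $O^{(v)}_u$ against $\Lambda_1$ restricted to $D_v\setminus D_{v'}$, and define $I^{(v')}_u,O^{(v')}_u$ symmetrically for $u\in C_L(v')$. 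Since $f\le 1$ and $\Lambda_1\ge 0$ one has $I^{(v)}_u,I^{(v')}_u\le\Lambda_1(D_v\cap D_{v'})=:\bar I$, and the three sets $D_v\cap D_{v'}$, $D_v\setminus D_{v'}$, $D_{v'}\setminus D_v$ being pairwise disjoint, $\bar I$, $(O^{(v)}_u)_{u\in C_L(v)}$ and $(O^{(v')}_u)_{u\in C_L(v')}$ are mutually independent. Writing $W^{(v)}=\sup_{u\in C_L(v)}(O^{(v)}_u+y_u)\le\sup_{u\in C_L(v)}(Z^{(t)}_u+y_u)$ and similarly $W^{(v')}$, one gets $A_v\subseteq\{\bar I+W^{(v)}>a_nx\}$ and $A_{v'}\subseteq\{\bar I+W^{(v')}>a_nx\}$, hence
\[
	\PP(A_v\cap A_{v'})\le\PP\bigl(\bar I>\tfrac12 a_nx\bigr)+\PP\bigl(W^{(v)}>\tfrac12 a_nx\bigr)\PP\bigl(W^{(v')}>\tfrac12 a_nx\bigr).
\]

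It remains to sum these two terms over the close pairs. The second term is dominated by $\PP(A'_v)\PP(A'_{v'})$ with $A'_v=\{\sup_{u\in C_L(v)}(Z^{(t)}_u+y_u)>\tfrac12 a_nx\}$; using $\sum_{v<v'\in J_z}\PP(A'_v)\PP(A'_{v'})\le\bigl(\sum_{v\in J_z}\PP(A'_v)\bigr)^2$ together with the argument of Lemma~\ref{thm:partialy} applied with $x$ replaced by $x/2$ — which gives $\sum_{v\in J_z}\PP(A'_v)\to 2^{\alpha}\taut/k$ — this part is asymptotically at most $(2^{\alpha}\taut/k)^2$, of the same harmless order $k^{-2}$ as the far-pair contribution. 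For the first term, a direct computation shows $\abs{D_v\cap D_{v'}}\le V_L:=2t(L+2t)^{d-1}$ uniformly over all close pairs; since the law of $\Lambda_1(A)$ depends only on $\abs A$ and is monotone in $A$, $\Lambda_1(D_v\cap D_{v'})$ is stochastically dominated by $\Lambda_1(S)$ for a fixed set $S$ with $\abs S=V_L$, and as $\Lambda_1(S)$ is infinitely divisible with L\'evy measure $V_L\rho_{(1,\infty)}$ one has $\PP(\Lambda_1(S)>\tfrac12 a_nx)\sim V_L\rho((\tfrac12 a_nx,\infty))\sim 2^{\alpha}V_L\,\rho((a_nx,\infty))$ as $n\to\infty$. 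Multiplying by the at most $(3^d-1)\abs{J_z}$ close pairs and using $\abs{J_z}\rho((a_nx,\infty))\to x^{-\alpha}\rho((1,\infty))/k$ from \eqref{eq:normingconstants}, the $\limsup_n$ of the first term is at most $c\,V_L/k$ with $c=(3^d-1)2^{\alpha}x^{-\alpha}\rho((1,\infty))$. Taking $g_L(k)$ to be the sum of the three bounds then gives $k\,g_L(k)=c\,V_L+(\tfrac12+2^{2\alpha})(\taut)^2/k$, which tends to $c\,V_L$ as $k\to\infty$; since $c\,V_L=O(L^{d-1})=o(L^d)$ as $L\to\infty$, this is precisely \eqref{eq:anticlusteringfunction}.

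All limiting statements used above are instances of Lemma~\ref{thm:partialy} (or of its $x/2$-variant, whose proof is identical), which holds for almost every realization $(y_v)_v$ simultaneously for all $z\in\Nk$ and all $k,L\in\NN$, since there are only countably many such triples; hence \eqref{eq:anticlusteringZy} follows for every $z\in\Nk$ and almost every realization. The step I expect to require most care is making the close-pair estimate uniform in $n$ over the family — whose cardinality grows with $n$ — of neighbouring cube-pairs inside $J_z$; this is exactly what the uniform volume bound $\abs{D_v\cap D_{v'}}\le V_L$ and the fact that the law of $\Lambda_1(A)$ depends only on $\abs A$ are designed to deliver.
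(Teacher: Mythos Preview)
Your proof is correct and takes a genuinely different route from the paper's, particularly in the treatment of the close (neighbouring) pairs.

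For the far pairs both arguments are essentially the same: independence reduces the joint probability to a product, and the square of $\sum_{v\in J_z}\PP(A_v)$ is controlled via Lemma~\ref{thm:partialy}. (Your invocation of Lemma~\ref{thm:partialy} is actually cleaner than the paper, which re-derives the relevant limit using Lemma~\ref{lem:regvaryingbound} and Theorem~\ref{thm:mixingY} directly.)

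For the close pairs the two arguments diverge. The paper uses inclusion--exclusion together with Lemma~\ref{lem:Zrhoequivalence} applied to the union $C_L(v)\cup C_L(v')$ to obtain the sharp joint-tail asymptotic
\[
\frac{\PP\bigl(\sup_{C_L(v)}Z^{(t)}_u>a_nx-y,\ \sup_{C_L(v')}Z^{(t)}_u>a_nx-y\bigr)}{\FF_{L,t}(a_nx)}\to 2-R_L^{-1}\int_{\Rd}\sup_{s\in C_L(v)\cup C_L(v')}\bigl(f^{(t)}(s-u)\bigr)^\alpha\dd u,
\]
uniformly for $\abs{y}\le N$; this quantity is then shown to vanish as $L\to\infty$ because $R_L\sim L^d$ while the integral is squeezed between $2L^d$ and $2R_L$. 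The $y$-shift is handled through uniform subexponential convergence combined with the ergodic Theorem~\ref{thm:mixingY} and a dominated-convergence step using Lemma~\ref{lem:regvaryingbound}.

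Your approach instead decomposes $\Lambda_1$ over the overlap $D_v\cap D_{v'}$ and its complement, bounds the overlap contribution crudely by $\bar I=\Lambda_1(D_v\cap D_{v'})$ via $f\le 1$, and exploits the uniform volume bound $\abs{D_v\cap D_{v'}}\le V_L=2t(L+2t)^{d-1}=O(L^{d-1})$. This is more elementary: it replaces the joint-tail computation for $\sup Z^{(t)}$ over a union by a single-variable tail estimate for $\Lambda_1(S)$, and---crucially---the $\bar I$ term does not involve $y$ at all, so the entire ergodic/uniform-convergence machinery is bypassed for the dominant close-pair contribution. The price is a coarser bound ($cV_L/k$ rather than the paper's explicit expression), but this is immaterial since only $o(L^d)$ is needed.

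Both approaches are valid; yours is arguably more transparent and requires less of the auxiliary machinery developed earlier in the section.
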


\begin{proof}
Throughout this proof we fix $\beta > \alpha$ such that
\[
	\EE \bigl(\sup_{v\in C_L(0)} Y_v \bigr)_+^\beta < \infty ,
\]
which is possible due to Lemma~\ref{lem:expintegrability}.
Furthermore,
let $F_{L,t}\in\RV$ denote the regularly varying distribution of $\sup_{u\in C_L(v)} Z_u^{(t)}$ for $t>0$, and recall that 
\begin{equation}\label{eq:Ztconvergencexn}
	\abs{C_n} \FF_{L,t}(a_nx) 
	\to \taut
\end{equation}
as $n\to\infty$.

Fix $L> 2t$. By construction, $\sup_{C_L(v)}Z_u^{(t)}$ and $\sup_{C_L(v')} Z_{u}^{(t)}$ are independent for all non-neighbors $v,v'\in J_z$ (recall that $J_z \subseteq (L\ZZ)^d$), that is, for all $v,v'\in J_z$ such that $\abs{v-v'}> L\sqrt{d}$. Writing $y^*(v)=\sup_{u\in C_L(v)} y_u$ and turning to Lemma~\ref{lem:regvaryingbound}, we find constants $C,K$ such that
\begin{align*}
	\MoveEqLeft
	\sum_{\substack{v<v' \in J_z\\\abs{v-v'}>L\sqrt{d}}} 
	\PP \bigl( 
	\sup_{u\in C_L(v)} (Z_u^{(t)} + y_u)> a_nx,
	\sup_{u\in C_L(v')} (Z_u^{(t)} + y_u)> a_nx \bigr) \\ &
	\le \sum_{v<v' \in J_z} 
	\FF_{L,t}(a_nx - y^*(v)) \FF_{L,t}(a_nx-y^*(v')) \\ &
	\le \bigl(\FF_{L,t}(a_nx)\bigr)^2 
	\Bigl(\sum_{v \in J_z} 
	 C \bigl(K+(y^*(v))_+^\beta \bigr) \Bigr)^2 
\end{align*}
for sufficiently large $n$ and all $z\in \Nk$. Since $\abs{C_n}/k \sim \abs{J_z}$ as $n\to\infty$, we conclude by Theorem~\ref{thm:mixingY} and \eqref{eq:Ztconvergencexn} that
\begin{align*}
	\MoveEqLeft
	\limsup_{n\to\infty }\,\bigl(\FF_{L,t}(a_nx)\bigr)^2 
	\Bigl(\sum_{v \in J_z} 
	 C \bigl(K+(y^*(v))_+^\beta \bigr) \Bigr)^2 \\ & 
	=  \frac{1}{k^2}
	\Bigl( \taut C\bigl(K+\EE \bigl(\sup_{v\in C_L(0)} Y_v \bigr)_+^\beta \bigr) \Bigr)^2 
\end{align*}
almost surely. This is independent of $z$ and of order $o(k^{-1})$ as $k\to\infty$ for all $L\in\NN$. This shows \eqref{eq:anticlusteringfunction} and \eqref{eq:anticlusteringZy} for the terms in the sum with indices more than $L \sqrt{d}$ apart.

For notational convenience, define
\[
	R_L = \int_\Rd \sup_{v\in C_L(0)} \bigl(\fft \bigr)^{\alpha} \dd u .
\]
Now consider $v,v'\in J_z$ such that $|v-v'|\le L \sqrt{d}$. Due to Lemma~\ref{lem:Zrhoequivalence} we have for all fixed $v\neq v'$ that 
\begin{align*}
	\MoveEqLeft
	\lim_{n\to\infty} \frac{1}{\FF_{L,t}(a_nx)}
	\PP \bigl(\sup_{u\in C_L(v) } Z_u^{(t)} > a_nx-y, 
	\sup_{u\in C_L(v') } Z_u^{(t)} > a_nx-y\bigr) \\ &
	\to 
	2 - R_L^{-1}\int_{\Rd} \sup_{s\in C_L(v)\cup C_L(v')} \bigl(f^{(t)}(s-u) \bigr)^{\alpha} \dd u
\end{align*}
uniformly for $\abs{y} \le N$, for all $N\in\NN$. Define $y^*(v,v')=\max\{y^*(v), y^*(v')\}$ and similarly for $(Y_v)_v$ and note that
\begin{equation}\label{eq:Ydoubleexpectation}
		\EE (Y^*(v,v'))_+^\beta
	\le 2\, \EE (\sup_{v\in C_L(0)} Y_v)_+^\beta < \infty .
\end{equation}
Combining the uniform convergence above with Theorem~\ref{thm:mixingY}, Lemma~\ref{lem:regvaryingbound}, equation \eqref{eq:Ztconvergencexn} and the fact that $\abs{C_n}/k \sim \abs{J_z}$ then yield
\begin{align*}
	\MoveEqLeft
	\limsup_{n\to\infty}
	\sum_{\substack{v<v' \in J_z\\\abs{v-v'}\le L \sqrt{d}}} 
	\PP \bigl( 
	\sup_{u\in C_L(v)} (Z_u^{(t)} + y_u)> a_nx,
	\sup_{u\in C_L(v')} (Z_u^{(t)} + y_u)> a_nx \bigr) \\ &
	\le	
	\limsup_{n\to\infty}
	\sum_{\substack{v<v' \in J_z\\\abs{v-v'}\le L\sqrt{d}}} 
	\PP \bigl( 
	\sup_{u\in C_L(v)} Z_u^{(t)}> a_nx - y^*(v,v'),
	\sup_{u\in C_L(v')} Z_u^{(t)}> a_nx - y^*(v,v') \bigr) \\ &	
	\le 
	\frac{\taut}{k} \sum_{\substack{v\in (L\ZZ)^d \\ 0<\abs{v}\le L\sqrt{d}}}
	\EE\bigl(\I{\abs{Y^*(0,v)}\le N} \bigr) \Bigl(2 - R_L^{-1}\int_{\Rd} \sup_{s\in C_L(v)\cup C_L(0)} \bigl(f^{(t)}(s-u) \bigr)^{\alpha} \dd u \Bigr) \\ &
	\quad 
	+ \frac{\taut}{k} \sum_{\substack{v\in (L\ZZ)^d \\ 0<\abs{v}\le L\sqrt{d}}}
	\EE \Bigl(C\bigl(K+ Y^*(0,v)_+^\beta \bigr) \,
	\I{\abs{Y^*(0,v)} > N} \Bigr)
\end{align*}
for all $N\in\NN$. Since this is independent of $z$, and due to the fact that there are only finitely many terms in the sum, we conclude \eqref{eq:anticlusteringZy} by letting $N \to \infty$ using a dominated convergence argument justified by \eqref{eq:Ydoubleexpectation}:
\begin{align*}
	\MoveEqLeft
	\limsup_{n\to\infty}
	\sum_{\substack{v<v' \in J_z\\\abs{v-v'}\le L \sqrt{d}}} 
	\PP \bigl( 
	\sup_{u\in C_L(v)} (Z_u^{(t)} + y_u)> a_nx,
	\sup_{u\in C_L(v')} (Z_u^{(t)} + y_u)> a_nx \bigr) \\ &
	\le	
	\frac{\taut}{k} \sum_{\substack{v\in (L\ZZ)^d \\ 0<\abs{v}\le L\sqrt{d}}}
	\Bigl(2 - R_L^{-1}\int_{\Rd} \sup_{s\in C_L(v)\cup C_L(0)} \bigl(f^{(t)}(s-u) \bigr)^{\alpha} \dd u \Bigr) .
\end{align*}
The proof is finished once we show that this upper bound satisfies \eqref{eq:anticlusteringfunction}. Since $\taut$ is asymptotically equivalent to $L^d$ by \eqref{eq:tauLconvergence}, this is the case if only the ($k$-independent) sum is of order $o(1)$ as $L\to\infty$. As the number of terms in the sum is fixed, it is a matter of showing that the summands tend to $0$. As $f(0)=1$ and $f\ge 0$ it is easily seen that
\[
	2L^d 
	\le \int_{\Rd} \sup_{s\in C_L(v)\cup C_L(0)} \bigl(f^{(t)}(s-u) \bigr)^{\alpha} \dd u
	\le 2 R_L 
\]
for all $0 \neq v$,
and the claim follows since $R_L \sim L^d$ by Lemma~\ref{lem:Lintegralequivalence}.
\end{proof}

\begin{proof}[{\bf Proof of Theorem~\ref{thm:extremelevy}}]
We continue with the notation used throughout this section. 
In particular, 
\[
	\taut = x^{-\alpha} \rho((1,\infty)) \int_{\Rd} \sup_{v\in C_L(0)} \bigl(\fft\bigr)^\alpha \dd u 
\]
for all fixed $x>0$.
Now consider $J_z$ for $z\in\Nk$. Recalling the definition of $\myL$ and $\myt$ and the fact that $\myt \le \myL$, it can easily be seen that
\begin{align*}
	\MoveEqLeft
	\sum_{v\in J_z} \PP(\myt(\{v\}) > a_nx) 
	- \sum_{v<v'\in J_z}\PP(\myt(\{v\}) > a_nx, \myt(\{v'\})> a_nx) \\ &
	\le  \PP(\myL(J_z) > a_nx) \\ &
	\le \sum_{v\in J_z} \PP(\myL(\{v\}) > a_nx) .
\end{align*}
We now obtain from Lemma~\ref{lem:supBnbound} that
\begin{equation}\label{eq:supBnbounds}
\begin{aligned}
	\MoveEqLeft	
	\Bigl(\liminf_{n\to\infty} \min_{z\in \Nk}\Bigl(1-\sum_{v\in J_z} \PP\bigl(\sup_{u\in C_L(v)}(Z_u + y_u) > a_nx \bigr)\Bigr)\Bigr)^{\tilde q_{k,L}} \\&
	\le \liminf_{n\to\infty} \PP\bigl( \sup_{v\in C_n} (Z_v + y_v) \le a_nx \bigr) \\ &
	\le \limsup_{n\to\infty} \PP\bigl( \sup_{v\in C_n} (Z_v + y_v) \le a_nx \bigr) \\ &
	\le \Bigl(\limsup_{n\to\infty} \max_{z\in \Nk}\Bigl(1-\sum_{v\in J_z} \PP\bigl(\sup_{u\in C_L(v)}(Z_u^{(t)} + y_u) > a_nx \bigr) + S_{n,k,L}(z)\Bigr)\Bigr)^{\tilde p_{k,L}},	
\end{aligned}
\end{equation}
where 
\[
	S_{n,k,L}(z) = \sum_{v<v' \in J_z} 
	\PP \bigl( 
	\sup_{u\in C_L(v)} (Z_u^{(t)} + y_u)> a_nx,
	\sup_{u\in C_L(v')} (Z_u^{(t)} + y_u)> a_nx
	\bigr),
\]
and $\tilde q_{k,L}=\limsup_n \qq$ and 
$\tilde p_{k,L}=\liminf_n \pp$. By Lemma~\ref{lem:anticlustering} there is a sequence of functions $g_L(k)$ satisfying 
\begin{equation*}
	\limsup_{k\to\infty} k \,g_L(k) = o(L^d)
\end{equation*} 
as $L\to\infty$, such that $\limsup_n S_{n,k,L}(z) \le g_L(k) $ uniformly in $z$. Since $\abs{C_n}/k\sim \abs{J_z}$, we find by Lemma~\ref{thm:partialy} and
\eqref{eq:supBnbounds} that
\begin{align*}
	\Bigl(1-\frac{\tau_L}{k}\Bigr)^{\tilde q_{k,L}}&
	\le \liminf_{n\to\infty} \PP\bigl( \sup_{v\in C_n} (Z_v + y_v) \le a_nx \bigr) \\ &
	\le \limsup_{n\to\infty} \PP\bigl( \sup_{v\in C_n} (Z_v + y_v) \le a_nx \bigr) \\ &
	\le \Bigl(1-\frac{\taut}{k} + g_L(k)\Bigr)^{\tilde p_{k,L}} 
\end{align*}
almost surely.
Since
\[
	\tilde p_{k,L} \sim \tilde q_{k,L} \sim \frac{k}{L^d} 
\]
as $k\to\infty$, we use the equivalence $\log(1-y)\sim -y$ ($y\to 0$) to obtain, as $k\to\infty$,
\begin{align*}
	\exp \Bigl(-\frac{\tau_L}{L^d}\Bigr)&
	\le \liminf_{n\to\infty} \PP\bigl( \sup_{v\in C_n} (Z_v + y_v) \le a_nx \bigr) \\ &
	\le \limsup_{n\to\infty} \PP\bigl( \sup_{v\in C_n} (Z_v + y_v) \le a_nx \bigr) \\ &
	\le \exp\Bigl(-\frac{\taut}{L^d} + o(1)\Bigr) .
\end{align*}
Here the remainder $o(1)$ is with respect to the limit $L\to\infty$.
Secondly, taking the limit $L\to\infty$ and using the equivalence \eqref{eq:tauLconvergence} show that  
\begin{equation*}
	\lim_{n\to\infty} \PP\bigl( \sup_{v\in C_n} (Z_v + y_v) \le a_nx \bigr) 
	= \exp \Bigl(-x^{-\alpha} \rho((1,\infty)) \Bigr) 
\end{equation*}
almost surely.

Let $\pi$ denote the distribution of the field $(Y_v)_v$. Then, by independence and dominated convergence,
\begin{align*}
	\PP\bigl( \sup_{v\in C_n} X_v \le a_nx \bigr) &
	= \int \PP\bigl( \sup_{v\in C_n} (Z_v + y_v) \le a_nx \bigr) \pi (\dd y)
	\to \exp \Bigl(-x^{-\alpha} \rho((1,\infty)) \Bigr) 
\end{align*}
as $n\to\infty$.
\end{proof}

\begin{proof}[{\bf Proof of Theorem~\ref{thm:extremelevyv2}}]
Following the proofs in this section, it can easily be seen that the claim in Theorem~\ref{thm:extremelevy} can be extended as follows: 
\begin{equation}\label{eq:extremeextended}
	\PP\Bigl(  
	\sup_{v\in C_n} \bigl( X_v + Y_v^1\bigr)
	\le a_n x \pm d_n \Bigr)
	\to \exp\Bigl( - x^{-\alpha}\rho((1,\infty)) \Bigr)
\end{equation}
for all $x>0$, where $(Y_v^1)_v$ is the sufficiently light-tailed and ergodic field given by \eqref{eq:Y1integrability} in the present theorem, and $d_n$ is a sequence of order $o(a_n)$ tending to $\infty$.

Consider the stationary field $(Y_v^2)_v$ and note that the integrability assumed in \eqref{eq:Y2integrability} is equivalent to
\begin{equation*}
	\begin{aligned}
	&
	\EE \bigl(\sup_{v\in C_1(0)} Y_v^2 \bigr)_+^{\beta_2} < \infty ,
	\qquad \text{and} \\&
	\EE \bigl(\inf_{v\in C_1(0)} Y_v^2 \bigr)_-^{\beta_2} < \infty ,
	\end{aligned}
\end{equation*}
where $y_-^{\beta_2} = (-y  \I{y \le 0})^{\beta_2}$, and $C_1(0)$ denotes the unit cube in $\Rd$. With this $\beta_2>\alpha$ in mind, choose the sequence $d_n$ according to Lemma~\ref{lem:regvarloworder}. Then, due to Lemma~\ref{lem:regvarloworder} and the fact that $d_n\to \infty$,
\begin{equation}\label{eq:0limitsup}
	\abs{C_n} \PP\bigl( \sup_{v\in C_1(0)} Y_v^2 > d_n\bigr)
	\sim
	\frac{\rho((1,\infty)) 
	\PP\bigl( (\sup_{v\in C_1(0)} Y_v^2)_+ > d_n\bigr)}{\rho((a_n,\infty))}
	\to 0
\end{equation}
as $n\to\infty$, and similarly
\begin{equation}\label{eq:0limitinf}
	\abs{C_n} \PP\bigl( \inf_{v\in C_1(0)} Y_v^2 < -d_n\bigr)
	\sim
	\frac{\rho((1,\infty)) 
	\PP\bigl( (\inf_{v\in C_1(0)} Y_v^2)_- > d_n\bigr)}{\rho((a_n,\infty))}
	\to 0 .
\end{equation}
Now fix $k=L=1$ and recall the discrete set $K_n=K_{n,1} \subseteq \Zd$ from Theorem~\ref{thm:maingeometrictheorem}, which in particular satisfies that
\[
	C_n \subseteq \bigcup_{z\in K_n} C_1(z)
\]
and $\abs{K_n} \le c \cdot \abs{C_n}$ for some finite, $n$-independent $c$. Turning to \eqref{eq:0limitsup} and the stationarity of $(Y_v^2)$ we thus see that
\begin{equation}\label{eq:0limitsup1}
	\begin{aligned}
	\PP\bigl( \sup_{v\in C_n} Y_v^2 > d_n \bigr) &
	\le
	\PP\bigl( \max_{z\in K_n} \sup_{v\in C_1(z)} Y_v^2 > d_n \bigr) \\&
	\le
	c\cdot \abs{C_n}
	\PP\bigl( \sup_{v\in C_1(0)} Y_v^2 > d_n \bigr)
	\to 0
	\end{aligned}
\end{equation}
as $n\to\infty$. Similarly, by \eqref{eq:0limitinf},
\begin{equation}\label{eq:0limitinf1}
	\PP\bigl( \inf_{v\in C_n} Y_v^2 < -d_n \bigr)
	\to 0
\end{equation}
as $n\to\infty$. Since, by independence of the fields involved,
\begin{align*}
	\MoveEqLeft	
	\PP\bigl( \sup_{v\in C_n}(X_v + Y_v^1) 
	\le a_n x - d_n \bigr)
	\PP \bigl(\sup_{v\in C_n} Y_v^2 \le d_n \bigr) 
	\\ &
	\le
	\PP\bigl( \sup_{v\in C_n}(X_v + Y_v^1 + Y_v^2) 
	\le a_n x \bigr) \\ &
	\le 
	\PP\bigl( \sup_{v\in C_n}(X_v + Y_v^1) 
	\le a_n x + d_n \bigr)
	\PP \bigl(\inf_{v\in C_n} Y_v^2 \ge -d_n \bigr)
	+ \PP \bigl(\inf_{v\in C_n} Y_v^2 < -d_n \bigr) ,
\end{align*}
the claim follows by \eqref{eq:extremeextended} in combination with \eqref{eq:0limitsup1} and \eqref{eq:0limitinf1}.
\end{proof}



  \bibliographystyle{elsarticle-harv} 
  \bibliography{bibref.bbl}

\begin{thebibliography}{28}
\expandafter\ifx\csname natexlab\endcsname\relax\def\natexlab#1{#1}\fi
\providecommand{\url}[1]{\texttt{#1}}
\providecommand{\href}[2]{#2}
\providecommand{\path}[1]{#1}
\providecommand{\DOIprefix}{doi:}
\providecommand{\ArXivprefix}{arXiv:}
\providecommand{\URLprefix}{URL: }
\providecommand{\Pubmedprefix}{pmid:}
\providecommand{\doi}[1]{\href{http://dx.doi.org/#1}{\path{#1}}}
\providecommand{\Pubmed}[1]{\href{pmid:#1}{\path{#1}}}
\providecommand{\bibinfo}[2]{#2}
\ifx\xfnm\relax \def\xfnm[#1]{\unskip,\space#1}\fi
\bibitem[{Adler and Taylor(2007)}]{Adler2007}
\bibinfo{author}{Adler, R.}, \bibinfo{author}{Taylor, J.E.},
  \bibinfo{year}{2007}.
\newblock \bibinfo{title}{Random Fields and Geometry}.
\newblock Monographs in mathematics, \bibinfo{publisher}{Springer Science}.
\bibitem[{Adler(1981)}]{Adler1981}
\bibinfo{author}{Adler, R.J.}, \bibinfo{year}{1981}.
\newblock \bibinfo{title}{The Geometry of Random Fields}.
\newblock \bibinfo{publisher}{John Wiley \& Sons, New York}.
\bibitem[{Barndorff-Nielsen and Schmiegel(2004)}]{Barndorff2004}
\bibinfo{author}{Barndorff-Nielsen, O.E.}, \bibinfo{author}{Schmiegel, J.},
  \bibinfo{year}{2004}.
\newblock \bibinfo{title}{L{\'e}vy-based tempo-spatial modelling with
  applications to turbulence}.
\newblock \bibinfo{journal}{Uspekhi Mat. Nauk} \bibinfo{volume}{159},
  \bibinfo{pages}{63--90}.
\bibitem[{Barndorff-Nielsen and Shephard(2001)}]{Barndorff2001}
\bibinfo{author}{Barndorff-Nielsen, O.E.}, \bibinfo{author}{Shephard, N.},
  \bibinfo{year}{2001}.
\newblock \bibinfo{title}{Non-{G}aussian {O}rnstein-{U}hlenbeck-based models
  and some of their uses in financial economics}.
\newblock \bibinfo{journal}{J. R. Stat. Soc. Ser. B} \bibinfo{volume}{63},
  \bibinfo{pages}{167--241}.
\bibitem[{Benth and Pircalabu(2018)}]{Benth2018}
\bibinfo{author}{Benth, F.E.}, \bibinfo{author}{Pircalabu, A.},
  \bibinfo{year}{2018}.
\newblock \bibinfo{title}{A non-{G}aussian {O}rnstein-{U}hlenbeck model for
  pricing wind power futures}.
\newblock \bibinfo{journal}{Appl. math. finance} \bibinfo{volume}{25},
  \bibinfo{pages}{36--65}.
\bibitem[{Braverman and Samorodnitsky(1995)}]{Bravermann1995}
\bibinfo{author}{Braverman, M.}, \bibinfo{author}{Samorodnitsky, G.},
  \bibinfo{year}{1995}.
\newblock \bibinfo{title}{Functionals of infinitely divisible stochastic
  processes with exponential tails}.
\newblock \bibinfo{journal}{Stoch. Process. Their Appl.} \bibinfo{volume}{56},
  \bibinfo{pages}{207--231}.
\bibitem[{Cline(1986)}]{Cline1986}
\bibinfo{author}{Cline, D.B.H.}, \bibinfo{year}{1986}.
\newblock \bibinfo{title}{Convolution tails, product tails and domains of
  attraction}.
\newblock \bibinfo{journal}{Probab. Theory Relat. Fields} \bibinfo{volume}{72},
  \bibinfo{pages}{529--557}.
\bibitem[{Cline(1987)}]{Cline1987}
\bibinfo{author}{Cline, D.B.H.}, \bibinfo{year}{1987}.
\newblock \bibinfo{title}{Convolutions of distributions with exponential and
  subexponential tails}.
\newblock \bibinfo{journal}{J. Aust. Math. Soc. Ser. A} \bibinfo{volume}{43},
  \bibinfo{pages}{347--365}.
\bibitem[{Embrechts et~al.(1979)Embrechts, Goldie and
  Veraverbeke}]{Embrechts1979}
\bibinfo{author}{Embrechts, P.}, \bibinfo{author}{Goldie, C.M.},
  \bibinfo{author}{Veraverbeke, N.}, \bibinfo{year}{1979}.
\newblock \bibinfo{title}{Subexponentiality and infinite divisibility}.
\newblock \bibinfo{journal}{Z. Wahrscheinlichkeitstheorie verw. Gebiete}
  \bibinfo{volume}{49}, \bibinfo{pages}{335--347}.
\bibitem[{Embrechts et~al.(1997)Embrechts, Kl{\"u}ppelberg and
  Mikosch}]{Embrecths1997}
\bibinfo{author}{Embrechts, P.}, \bibinfo{author}{Kl{\"u}ppelberg, C.},
  \bibinfo{author}{Mikosch, T.}, \bibinfo{year}{1997}.
\newblock \bibinfo{title}{Modelling Extremal Events for Insurance and Finance}.
\newblock Applications of mathematics, \bibinfo{publisher}{New York: Springer}.
\bibitem[{Fasen(2005)}]{Fasen2005}
\bibinfo{author}{Fasen, V.}, \bibinfo{year}{2005}.
\newblock \bibinfo{title}{Extremes of regularly varying {L}{\'e}vy-driven mixed
  moving average processes}.
\newblock \bibinfo{journal}{Adv. Appl. Probab} \bibinfo{volume}{37},
  \bibinfo{pages}{993--1014}.
\bibitem[{Jakubowski and Soja-Kukie{\l}a(2019)}]{Jakubowski2019}
\bibinfo{author}{Jakubowski, A.}, \bibinfo{author}{Soja-Kukie{\l}a, N.},
  \bibinfo{year}{2019}.
\newblock \bibinfo{title}{Managing local dependencies in asymptotic theory for
  maxima of stationary random fields}.
\newblock \bibinfo{journal}{Extremes} \bibinfo{volume}{22},
  \bibinfo{pages}{293--315}.
\bibitem[{J{\'o}nsd{\'o}ttir et~al.(2013)J{\'o}nsd{\'o}ttir, R{\o}nn-Nielsen,
  Mouridsen and {Jensen, {E. B. V.}}}]{Jonsdottir2013}
\bibinfo{author}{J{\'o}nsd{\'o}ttir, K.Y.}, \bibinfo{author}{R{\o}nn-Nielsen,
  A.}, \bibinfo{author}{Mouridsen, K.}, \bibinfo{author}{{Jensen, {E. B. V.}}},
  \bibinfo{year}{2013}.
\newblock \bibinfo{title}{L{\'e}vy-based modelling in brain imaging}.
\newblock \bibinfo{journal}{Scand. J. Stat.} \bibinfo{volume}{40},
  \bibinfo{pages}{511--529}.
\bibitem[{Leadbetter et~al.(1983)Leadbetter, Lindgren and
  Rootzen}]{Leadbetter1983}
\bibinfo{author}{Leadbetter, M.R.}, \bibinfo{author}{Lindgren, G.},
  \bibinfo{author}{Rootzen, H.}, \bibinfo{year}{1983}.
\newblock \bibinfo{title}{Extremes and Related Properties of Random Sequences
  and Processes}.
\newblock Springer Series in Statistics, \bibinfo{publisher}{Springer Verlag}.
\bibitem[{Pakes(2004)}]{Pakes2004}
\bibinfo{author}{Pakes, A.G.}, \bibinfo{year}{2004}.
\newblock \bibinfo{title}{Convolution equivalence and infinite divisibility}.
\newblock \bibinfo{journal}{J. Appl. Probab.} \bibinfo{volume}{41},
  \bibinfo{pages}{407--424}.
\bibitem[{Potthoff(2009)}]{Potthoff2009}
\bibinfo{author}{Potthoff, J.}, \bibinfo{year}{2009}.
\newblock \bibinfo{title}{Sample properties of random fields. {I}.
  {S}eparability and measurability.}
\newblock \bibinfo{journal}{Commun. Stoch. Analysis} \bibinfo{volume}{3},
  \bibinfo{pages}{143--153}.
\bibitem[{Rajput and Rosinski(1989)}]{Rajput1989}
\bibinfo{author}{Rajput, B.S.}, \bibinfo{author}{Rosinski, J.},
  \bibinfo{year}{1989}.
\newblock \bibinfo{title}{Spectral representations of infinitely divisible
  processes}.
\newblock \bibinfo{journal}{Probab. Theory Relat. Fields} \bibinfo{volume}{82},
  \bibinfo{pages}{451--487}.
\bibitem[{Resnick(2008)}]{Resnick2008}
\bibinfo{author}{Resnick, S.I.}, \bibinfo{year}{2008}.
\newblock \bibinfo{title}{Extreme Values, Regular Variation and Point
  Processes}.
\newblock Springer Series in Operations Research and Financial Engineering,
  \bibinfo{publisher}{Springer, New York}.
\bibitem[{R{\o}nn-Nielsen and Jensen(2016)}]{RNielsen2016}
\bibinfo{author}{R{\o}nn-Nielsen, A.}, \bibinfo{author}{Jensen, E.},
  \bibinfo{year}{2016}.
\newblock \bibinfo{title}{Tail asymptotics for the supremum of an infinitely
  divisible field with convolution equivalent {L}\'{e}vy measure}.
\newblock \bibinfo{journal}{J. Appl. Probab.} \bibinfo{volume}{53},
  \bibinfo{pages}{244--261.}
\bibitem[{R{\o}nn-Nielsen and Jensen(2019)}]{RNielsen2019}
\bibinfo{author}{R{\o}nn-Nielsen, A.}, \bibinfo{author}{Jensen, E.B.V.},
  \bibinfo{year}{2019}.
\newblock \bibinfo{title}{Central limit theorem for mean and variogram
  estimators in {L}{\'e}vy-based models}.
\newblock \bibinfo{journal}{J. Appl. Probab.} \bibinfo{volume}{56},
  \bibinfo{pages}{209--222}.
\bibitem[{Rootzen(1978)}]{Rootzen1978}
\bibinfo{author}{Rootzen, H.}, \bibinfo{year}{1978}.
\newblock \bibinfo{title}{Extremes of moving averages of stable processes}.
\newblock \bibinfo{journal}{Ann. Probab.} \bibinfo{volume}{6},
  \bibinfo{pages}{847--869}.
\bibitem[{Rosinski and Samorodnitsky(1993)}]{RosinskiSamorodnitsky1993}
\bibinfo{author}{Rosinski, J.}, \bibinfo{author}{Samorodnitsky, G.},
  \bibinfo{year}{1993}.
\newblock \bibinfo{title}{Distributions of subadditive functionals of sample
  paths of infinitely divisible processes}.
\newblock \bibinfo{journal}{Ann. Probab.} \bibinfo{volume}{21},
  \bibinfo{pages}{996--1014}.
\bibitem[{Samorodnitsky(2016)}]{Samorodnitsky2016}
\bibinfo{author}{Samorodnitsky, G.}, \bibinfo{year}{2016}.
\newblock \bibinfo{title}{Stochastic Processes and Long Range Dependence}.
\newblock Springer Series in Operations Research and Financial Engineering,
  \bibinfo{publisher}{Springer, Switzerland}.
\bibitem[{Sato(1999)}]{Sato1999}
\bibinfo{author}{Sato, K.}, \bibinfo{year}{1999}.
\newblock \bibinfo{title}{L{\'e}vy Processes and Infinitely Divisible
  Distributions}.
\newblock Cambridge Studies in Advanced Mathematics,
  \bibinfo{publisher}{Cambridge University Press}.
\bibitem[{Schneider(1993)}]{Schneider1993}
\bibinfo{author}{Schneider, R.}, \bibinfo{year}{1993}.
\newblock \bibinfo{title}{Convex Bodies: The Bruun--Minkowski Theory}.
\newblock \bibinfo{publisher}{Cambridge University Press}.
\bibitem[{Soja-{K}ukie{\l}a(2019)}]{Soja2019}
\bibinfo{author}{Soja-{K}ukie{\l}a, N.}, \bibinfo{year}{2019}.
\newblock \bibinfo{title}{On maxima of stationary fields}.
\newblock \bibinfo{journal}{J. Appl. Probab.} \bibinfo{volume}{56},
  \bibinfo{pages}{1217--1230}.
\bibitem[{Stehr and R{\o}nn-Nielsen(2021a)}]{StehrRonnNielsen2020}
\bibinfo{author}{Stehr, M.}, \bibinfo{author}{R{\o}nn-Nielsen, A.},
  \bibinfo{year}{2021}a.
\newblock \bibinfo{title}{Extreme value theory for spatial random fields --
  with applications to a {L}\'evy-driven field}
  \bibinfo{volume}{\emph{Submitted}}.
\bibitem[{Stehr and R{\o}nn-Nielsen(2021b)}]{Stehr2020b}
\bibinfo{author}{Stehr, M.}, \bibinfo{author}{R{\o}nn-Nielsen, A.},
  \bibinfo{year}{2021}b.
\newblock \bibinfo{title}{Tail asymptotics of an infinitely divisible
  space-time model with convolution equivalent {L}{\'e}vy measure}.
\newblock \bibinfo{journal}{J. Appl. Probab.} \bibinfo{volume}{58},
  \bibinfo{pages}{42--67}.
\newblock \DOIprefix\doi{10.1017/jpr.2020.73}.

\end{thebibliography}


%
\end{document}